\documentclass[12pt,a4paper]{amsart}

\usepackage{amssymb}
\usepackage{amscd}
\usepackage{hyperref}

\setlength{\textwidth}{418pt}
\setlength{\oddsidemargin}{17.5pt}
\setlength{\evensidemargin}{17.5pt}

\def\frak{\mathfrak}
\def\Bbb{\mathbb}
\def\Cal{\mathcal}

\let\phi\varphi

\newcommand{\x}{\times}
\renewcommand{\o}{\circ}

\newcommand{\be}{\beta}

\newcommand{\de}{\delta}
\newcommand{\ep}{\epsilon}

\newcommand{\om}{\omega}
\newcommand{\ph}{\phi}
\newcommand{\ps}{\psi}
\renewcommand{\th}{\theta}
\newcommand{\si}{\sigma}
\newcommand{\ze}{\zeta}
\newcommand{\Ga}{\Gamma}
\newcommand{\La}{\Lambda}
\newcommand{\Ph}{\Phi}
\newcommand{\Ps}{\Psi}
\newcommand{\Om}{\Omega}

\newcommand{\Up}{\Upsilon}
\def\Rho{\mbox{\textsf{P}}}

\newcommand{\barm}{\overline{M}}
\newcommand{\vol}{\operatorname{vol}}
\newcommand{\im}{\operatorname{im}}
\newcommand{\id}{\operatorname{id}}
\newcommand{\tfp}{\operatorname{tfp}}
\newcommand{\Ric}{\operatorname{Ric}}
\newcommand{\Herm}{\operatorname{Herm}}
\newcommand{\End}{\operatorname{End}}

\newcommand{\rpl}                         
{\mbox{$
\begin{picture}(12.7,8)(-.5,-1)
\put(0,0.2){$+$}
\put(4.2,2.8){\oval(8,8)[r]}
\end{picture}$}}

\numberwithin{equation}{section}

\newcounter{theorem}
\newtheorem{thm}[theorem]{Theorem}
\newtheorem*{thm*}{Theorem \thesubsection}
\newtheorem{lemma}[theorem]{Lemma}
\newtheorem{prop}[theorem]{Proposition}
\newtheorem{cor}[theorem]{Corollary}
\newtheorem*{lemma*}{Lemma \thesubsection}
\newtheorem*{prop*}{Proposition \thesubsection}
\newtheorem*{cor*}{Corollary \thesubsection}

\theoremstyle{definition}
\newtheorem{definition}[theorem]{Definition}
\newtheorem*{definition*}{Definition \thesubsection}

\newtheorem*{example*}{Example \thesubsection}
\theoremstyle{remark}
\newtheorem{remark}[theorem]{Remark}
\newtheorem*{remark*}{Remark \thesubsection}

\def\sideremark#1{\ifvmode\leavevmode\fi\vadjust{\vbox to0pt{\vss
 \hbox to 0pt{\hskip\hsize\hskip1em
 \vbox{\hsize3cm\tiny\raggedright\pretolerance10000
  \noindent #1\hfill}\hss}\vbox to8pt{\vfil}\vss}}}%
                        
                                                   %

\begin{document}
\renewcommand{\today}{}
\title{C-Projective Compactification;\\
 (quasi--)K\"ahler Metrics and CR boundaries}

\author{Andreas \v Cap and A.\ Rod Gover}

\address{A.\v C.: Faculty of Mathematics\\
University of Vienna\\
Oskar--Morgenstern--Platz 1\\
1090 Wien\\
Austria\\
A.R.G.:Department of Mathematics\\
  The University of Auckland\\
  Private Bag 92019\\
  Auckland 1142\\
  New Zealand;\\
Mathematical Sciences Institute\\
Australian National University \\ ACT 0200, Australia} 
\email{Andreas.Cap@univie.ac.at}
\email{r.gover@auckland.ac.nz}

\begin{abstract}
  For complete complex connections on almost complex manifolds we
  introduce a natural definition of compactification. This is based on
  almost c--projective geometry, which is the almost complex analogue
  of projective differential geometry. The boundary at infinity is a
  (possibly non-integrable) CR structure.  The theory applies to
  almost Hermitean manifolds which admit a complex metric connection
  of minimal torsion, which means that they are quasi--K\"ahler in the
  sense of Gray--Hervella; in particular it applies to K\"ahler and nearly
  K\"{a}hler manifolds. Via this canonical connection, we obtain a notion of
  c-projective compactification for quasi--K\"ahler metrics of any
  signature.

  We describe an asymptotic form for metrics that is necessary and
  sufficient for c--projective compactness. This metric form provides
  local examples and, in particular, shows that the usual complete
  K\"{a}hler metrics associated to smoothly bounded, strictly
  pseudoconvex domains in $\mathbb{C}^n$ are c--projectively
  compact. For a smooth manifold with boundary and a complete
  quasi-K\"ahler metric $g$ on the interior, we show that if its
  almost c--projective structure extends smoothly to the boundary then
  so does its scalar curvature. We prove that $g$ is almost
  c--projectively compact if and only if this scalar curvature is
  non-zero on an open dense set of the boundary, in which case it is,
  along the boundary, locally constant and hence nowhere zero there.
  Finally we describe the asymptotics of the curvature, showing, in
  particular, that the canonical connection satisfies an asymptotic
  Einstein condition. Key to much of the development is a certain real
  tractor calculus for almost c--projective geometry, and this is
  developed in the article.
\end{abstract}

\maketitle

\subjclass{MSC2010: Primary 32J05, 32Q60, 53B10, 53B15, 53B35, 53C55;
  Secondary 32J27, 53A20, 53C15, 53C25}


\pagestyle{myheadings} \markboth{\v Cap, Gover}{C--Projective
  compactness}

\thanks{Both authors gratefully acknowledge support from the Royal
  Society of New Zealand via Marsden Grant 10-UOA-113; A\v C
  gratefully acknowledges support by project P27072-N25 of the
  Austrian Science Fund (FWF) and also the hospitality of the
  University of Auckland. }

\section{Introduction}\label{1}

Consider a smooth manifold $\barm$ with boundary $\partial M$,
interior $M$, and a geometric structure on $M$ which does not admit a
smooth extension to the boundary; for example a complete Riemannian
metric. It is then natural to ask whether some aspect or weakening of
the interior geometry admits a smooth extension to the boundary and
gives rise to a geometric structure there. If this works, one can
start to relate asymptotics of the interior geometry to the boundary
geometry on various levels (geometric objects, equations of geometric
origin, and so forth) and also deduce consequences for the spectral
theory of suitable operators. Such ideas are fundamental for several
areas of mathematics and theoretical physics, including GR, scattering
theory, complex analysis, conformal and parabolic geometries,
representation theory, and holography see
e.g.~\cite{AdSCFTreview,AnkOrst,Feff,Fr,GrWitt,GSB,HislopPerry,MazzMel,Melrose}.

The most well known example of this idea is conformal
compactification; a complete metric on $M$ is conformally compact if
its conformal structure extends to the boundary $\partial M$ and it
has a suitably uniform asymptotic volume growth toward $\partial
M$. In this case, the boundary inherits a conformal structure and the
resulting structure provides a deep link between this conformal
geometry and the interior (pseudo--)Riemannian geometry.  

Guided by examples arising from reductions of projective holonomy
\cite{ageom,hol-red}, a notion of compactification linked to
projective differential geometry was introduced in
\cite{proj-comp,proj-comp2}. This concept is initially defined for
affine connections, so, via the Levi--Civita connection, it
automatically applies to pseudo--Riemannian metrics. From the point of
view of geometric analysis this {\em projective compactification} is
motivated by its strong links to geodesic structure and the fact that
many of the natural equations studied in Riemannian geometry and
physics have projective invariance, but not conformal invariance. An
implicit use of projective compactification is important in recent
microlocal analysis advances of Vasy in \cite{Vforms,Vasy}.

In this article, we introduce a concept of compactification that is
suitable for complex geometries (and also the more general almost
complex geometries); it may be viewed as an analogue of the projective
compactification of order two from \cite{proj-comp,proj-comp2},
although the complex situation is considerably more subtle. Hence we
make contact with complex analysis, where the study of domains via the
geometry of their boundaries is of fundamental importance
\cite{Feff,Cheng-Yau,Hirachi}.  The appropriate almost complex version of
projective geometry has been classically studied under the name
h--projective (or holomorphically projective) geometry. This name is
misleading however, since it is not holomorphic in nature. Recently
there has been renewed interest in these geometries, see e.g.\
\cite{MR1,MR2}, also because they provide an example of a parabolic
geometry.  Adopting the latter point of view, a general basic theory
of these geometries has been worked out under the name \textit{almost
  c--projective structures} in \cite{CEMN}, which will be one of our
standard references.

An almost c--projective structure can be defined on any almost complex manifold,
and is given by an equivalence class of linear connections on the
tangent bundle which preserve the almost complex structure and have
minimal torsion. Hence in the situation of a manifold $\barm$ with
boundary $\partial M$ and interior $M$, we will in addition assume
that we have given an almost complex structure $J$ on $M$. The concept
of c--projective compactness is then defined for linear connections on
$TM$ which preserve $J$ and are minimal in the sense that their
torsion is of type $(0,2)$ (so they are torsion--free if $J$ is
integrable). Such a connection $\nabla$ is then called c--projectively
compact if certain explicit c--projective modifications of $\nabla$,
constructed from local defining functions for the boundary, admit
smooth extensions to the boundary, see Definition \ref{def2.2}. 

If $\nabla$ is c--projectively compact, then the definition easily
implies that the almost c--projective structure defined by $\nabla$ admits a
smooth extension to the boundary. In particular, this provides a
smooth extension of $J$ to the boundary which endows $\partial M$ with
an almost CR structure, which is CR for integrable $J$. The main case
of interest will be that this structure is (Levi--)
non--degenerate. Moreover, we will always assume that, along the
boundary, the Nijenhuis tensor has values tangent to the boundary,
which in particular implies that the boundary structure is partially
integrable. This condition emerges naturally from several points of
view. Similar to the case of projective compactness, we show that for
a complex connection $\nabla$ which preserves a volume form, in addition to
requiring that the almost c--projective structure defined by $\nabla$ admits a
smooth extension to the boundary, one only has to require a certain
uniform rate of volume growth to ensure that $\nabla$ is
c--projectively compact, see Proposition \ref{prop2.3}.

In the integrable case, the Levi--Civita connection $\nabla$ of any
(pseudo--)K\"ahler metric $g$ on $M$ preserves $J$ and is torsion
free. In this setting we say $g$ is c--projectively compact if $\nabla$ is
c--projectively compact. In the non--integrable situation, a similar
concept is defined provided that $g$ is quasi--K\"ahler  which implies
that there is a linear connection, which preserves both $g$ and $J$
and is minimal, see Proposition \ref{prop2.5}. This so--called
\textit{canonical connection} is different from the Levi--Civita
connection of $g$ and plays the main role in questions related to
c--projective compactness. The main results of this article 
concern c--projectively compact metrics. In particular, we give two
equivalent characterizations of c--projective compactness of
quasi--K\"ahler metrics. 

For one of these equivalent characterizations, we assume that the
almost complex structure $J$ admits a smooth extension to all of
$\barm$, so as discussed above $\partial M$ inherits an almost CR
structures. We further assume that this structure is non--degenerate
and that the Nijenhuis tensor has asymptotically tangential
values. Under these assumptions, we devise a specific asymptotic form
for a metric $g$ involving the boundary geometry, see Section
\ref{2.6} for details. In Theorem \ref{thm2.6} we prove directly that
this asymptotic form is sufficient for c--projective compactness. Of
course this form may be used to provide local examples. In particular
this implies that the standard construction of complete K\"ahler
metrics, from boundary defining functions, on non--degenerate smoothly
bounded domains in $\mathbb{C}^{n}$ always leads to c--projectively
compact K\"ahler metrics; see Proposition \ref{prop2.7}. This clearly
demonstrates the richness and relevance of the class of c-projectively
compact metrics.

The second equivalent description of c--projective compactness for
metrics is based on a c--projective interpretation of scalar
curvature. Having given $\barm$, an almost complex structure $J$ and a
quasi--K\"ahler metric $g$ on $M$ with canonical connection $\nabla$,
assume that the almost c--projective structure defined by $\nabla$ admits a
smooth extension to $\barm$. (This condition can be easily checked in
local frames, see Lemma \ref{lemma4.2}.) After giving a c--projective
interpretation of several quantities associated to $g$, in particular
the scalar curvature $S$ of $\nabla$, we prove that these quantities
admit a smooth extension to $\barm$. The second equivalent condition
is then the extendability of the almost c--projective structure together with
the fact that resulting smooth extension of $S$ is nowhere vanishing
along $\partial M$.

For the proof of equivalence of these conditions to c--projective
compactness we use powerful tools from almost c--projective geometry;
these are developed in Section 3. We construct descriptions of the
real tractor bundles $\Cal H$ and $\Cal H^*$ of Hermitean metrics on
the c--projective strandard tractor bundle $\Cal T$ and on its dual
$\Cal T^*$. We describe the canonical tractor connections on these
bundles, the associated BGG splitting operators and the induced
invariant differential equations. Once one has obtained an extension
of the almost c--projective structure to the boundary, all these
objects admit extensions to the boundary, which is a major ingredient
in our proofs.

The proofs for the equivalence are then carried out in Section
\ref{4}. The basic extension results for quantities associated to a
quasi--K\"ahler metric on $(M,J)$, for which the almost c--projective
structure induced by the canonical connection extends to $\barm$, are
proved in Corollary \ref{cor4.1}. In Theorem \ref{thm4.3}, it is shown
that locally around boundary points for which the extension of $S$ is
non--vanishing, $g$ is then neccesarily c--projectively compact. The
remaining parts of the equivalence are: first a proof that a almost c--projectively
compact metric admits a weaker version of the asymptotic form
involving the restriction of $S$ to the boundary, see Propositon
\ref{prop4.4};  and then the proof that the boundary value of $S$ is locally
constant and finally that the asymptotic form has all required properties, see 
Theorem \ref{thm4.5}.

In the last part of the article, we analyze the curvature of
c--projectively compact connections and metrics. In particular, we
prove that the canonical connection $\nabla$ of a c--projectively
compact metric always satisfies an asymptotic version of the Einstein
equation and in the integrable case we give a complete description of
the curvature of the metric up to terms which admit a smooth extension
to the boundary, see Theorem \ref{thm4.7}. Finally, we show that
assuming an asymptotic vanishing condition for the covariant
derivative of the Nijenhuis tensor, one may, in many of our results,
replace curvarture quantities associated to the canonical connection
by the corresponding curvature quantities associated to the Levi--Civita connection, see
Corollary \ref{cor4.8}. 

It is to be expected that, similar to the projective case (see
\cite{proj-comp2}), one can use the c--projective standard tractor
bundle and its canonical tractor connection to obtain a description of
the tractors associated to the partially integrable almost CR
structure on the boundary. This will be taken up elsewhere.

\section{c--Projective compactness with a real boundary}\label{2}

\subsection{Almost c--projective structures}\label{2.1} 
Almost c--projective structures are the natural almost complex analog
of classical projective structures. In the literature, these
geometries are often referred to as $h$--projective or holomorphic
projective structures, but this is a misleading name. Indeed, there is
a holomorphic version of classical projective structures, i.e.~one
considers complex manifolds with holomorphic linear connections on
their tangent bundles which have the same complex geodesics up to
parametrization. This, however, is only a special case of
c--projective structures, which moreover is not relevant for many
important applications that involve Hermitean metrics. Therefore,
following \cite{CEMN}, we use the name ``(almost) c--projective'' for
these geometries.

Suppose that $M$ is a smooth manifold of even dimension $n=2m\geq 4$
endowed with an almost complex structure $J$ and suppose that $\nabla$
is a linear connection on $TM$ such that $J$ is parallel for the
connection induced by $\nabla$. This reads as $\nabla_\xi
J\eta=J\nabla_\xi\eta$ for all vector fields $\xi,\eta\in\frak
X(M)$. From this it is easy to see that changing $\nabla$ projectively
(in the real sense) will never lead to a connection which again
preserves $J$. The appropriate modification needed to preserve this
property is to take the usual definition of a projective change,
extend one--forms to complex linear functionals on the tangent space
and then use the complex analog of the classical
definition. Explicitly, this means that given $\nabla$ and a one--form
$\Up\in\Om^1(M)$, one defines a new linear connection
$\hat\nabla=\nabla+\Up$ by
\begin{equation}
\label{cpdef}
\hat\nabla_\xi\eta=\nabla_\xi\eta+\Up(\xi)\eta-\Up(J\xi)J\eta+
\Up(\eta)\xi-\Up(J\eta)J\xi.
\end{equation}
One immediately verifies that $\nabla J=0$ implies $\hat\nabla
J=0$. One defines two linear connections on $TM$ which preserve $J$ to
be \textit{c--projectively equivalent} if and only if they are related
by \eqref{cpdef} for some one--form $\Up\in\Om^1(M)$.

Let us remark at this point, that while \eqref{cpdef} is the
traditional way to associate a c--projective change of connection to a
one--form, it does not agree with the conventions used in \cite{CEMN},
see Section 2.1 of that reference. The conventions in \cite{CEMN} are
designed to work well in a complexified setting. For the current
paper, however, it is of crucial importance to work in a real
setting. In this real setting, the traditional conventions lead to
formulae which are more closely analogous to the case of projective
geometry and thus simplify comparison to the theory of projective
compactness developed in \cite{proj-comp,proj-comp2,scalar}. 

From the definition in \eqref{cpdef} it is evident that
$\hat\nabla_\xi\eta-\nabla_\xi\eta$ is symmetric in $\xi$ and $\eta$,
which implies $\nabla$ and $\hat\nabla$ have the same torsion. Now it
is well known that for a connection preserving an almost complex
structure the $(0,2)$--component of the torsion (i.e.~the part which
is conjugate linear in both arguments) is independent of the
connection and equals (up to a non--zero factor) the Nijenhuis--tensor
of the almost complex structure. On the other hand, the other
components of the torsion can be removed by a change of the connection
(preserving $J$); hence there always are linear connections which
preserve $J$ and whose torsion is of type $(0,2)$. This motivates the
following definitions.

\begin{definition}\label{def2.1}
  Let $(M,J)$ be an almost complex manifold.

  (1) A linear connection $\nabla$ on $M$ is called \textit{complex}
  if $J$ is parallel for (the connection induced by) $\nabla$ and it
  is called \textit{minimal} if its torsion is of type $(0,2)$,
  i.e.~conjugate linear in both arguments.

  (2) An \textit{almost c--projective structure} on $M$ is a
  c--projective equivalence class $[\nabla]$ of minimal complex linear
  connections on $TM$.

  (3) The structure is called \textit{c--projective} or
  \textit{torsion free} if and only if $J$ is integrable or
  equivalently the connections in the projective class are torsion
  free.
\end{definition}

As in the case of usual projective structures, a linear connection on
$TM$ induces linear connections on all natural vector bundles over
$M$, i.e.~on all vector bundles induced from the complex linear frame
bundle of $M$. In particular, one can form real and complex density
bundles, which we will frequently need in what follows. Let us fix the
conventions we will use. We will denote the complex line bundle
$\La^m_{\Bbb C}TM$ (the highest complex exterior power of the tangent
bundle) by $\Cal E(m+1,0)$. We will assume that there exist $(m+1)$st
roots of this line bundle, and that a specific root $\Cal E(1,0)$ has
been chosen. We then define $\Cal E(-1,0)$, $\Cal E(0,1)$, and $\Cal
E(0,-1)$ as the dual, the conjugate, and the conjugate dual bundle to
$\Cal E(1,0)$. Forming tensor powers, we thus obtain complex density
bundles $\Cal E(k,\ell)$ for $k,\ell\in\Bbb Z$.

On the other hand, we can consider the usual real density bundles. As
an almost complex manifold, $M$ is automatically orientable and the
almost complex structure induces an orientation. Thus the bundle of
real volume densities naturally includes into $\Cal E(-m-1,-m-1)$, so
we denote it by $\Cal E(-2m-2)$. Since this is a trivial real line
bundle, we can form arbitrary real roots of this bundle, thus defining
$\Cal E(w)$ for all $w\in\Bbb R$. Passing to roots, we obtain
inclusions $\Cal E(2k)\subset\Cal E(k,k)$ for all $k\in\Bbb Z$. This
then allows us to define complex density bundles $E(w,w')$ for all
$w,w'\in\Bbb R$ provided that $w-w'\in\Bbb Z$, and we always get an
inclusion of the real line bundle $\Cal E(w)$ into the complex line
bundle $\Cal E(\frac{w}2,\frac{w}2)$. We will follow the convention
that adding $(w)$ to the name of a real vector bundle indicates a
tensor product with $\Cal E(w)$, while for a complex vector bundle,
adding $(w,w')$ to the name indicates a complex tensor product with
$\Cal E(w,w')$.

Almost c--projective structures with the additional choice of a line
bundle $\Cal E(1,0)$ can be equivalently described as (real) normal
parabolic geometries of type $(G,P)$, where $G:=SL(m+1,\Bbb C)$ and
$P$ is the stabilizer of a complex line in the standard representation
$\Bbb C^{m+1}$ of $G$, see \cite{CEMN}. This correspondence is
established similarly to the discussion of the real case in Section
4.1.5 of \cite{book}, see also Section 4.6 of \cite{twistors}.

This Cartan geometry can be equivalently encoded via the associated
bundle corresponding to the standard representation $\Bbb C^{m+1}$ of
$G$. This is the so--called \textit{standard tractor bundle} $\Cal T$
on which the Cartan connection induces a linear connection
$\nabla^{\Cal T}$, the \textit{standard tractor connection}. This
approach is developed in detail (mainly in a complexified picture) in
\cite{CEMN}.

\subsection{The notion of c--projective compactness}\label{2.2}
We now introduce a notion analogous to projective compactness of order
two as introduced in \cite{ageom} and \cite{proj-comp}, and further
studied in \cite{proj-comp2} and \cite{scalar}. Let $\barm$ be a real
smooth manifold of real dimension $n=2m$ with boundary $\partial M$
and interior $M$. Suppose further that we have given an almost complex
structure $J$ on $M$ and a minimal complex linear connection $\nabla$
on $TM$.

\begin{definition}\label{def2.2}
  The complex connection $\nabla$ on $TM$ is called \textit{c--projectively
    compact} if and only if for each $x\in\partial M$, there is a
  neighborhood $U$ of $x\in\barm$ and a smooth defining function
  $\rho:U\to\Bbb R_{\geq 0}$ for $U\cap\partial M$ such that the
  c--projectively equivalent connection $\hat\nabla
  =\nabla+\frac{d\rho}{2\rho}$ on $U\cap M$ extends smoothly to all of
  $U$.
\end{definition}

Observe first that this condition is independent of the
defining function under consideration. Given a defining function
$\rho$ on $U$, any other defining function on $U$ can be written as
$\hat\rho=e^f\rho$ for some smooth function $f:U\to\Bbb R$. This
immediately implies that $d\hat\rho=\hat\rho df+e^fd\rho$ and hence
$\frac{d\hat\rho}{\hat\rho}=df+\frac{d\rho}{\rho}$. Since $df$ is smooth
up to the boundary, this implies that if the connection associated to
$\rho$ extends, then so does the one associated to $\hat\rho$.

Using this, we can prove a first nice result on c--projectively
compact connections.
\begin{prop}\label{prop2.2}
  If the linear connection $\nabla$ on $TM$ is c--projectively
  compact, then the almost c--projective structure $(J,[\nabla])$ on
  $M$ naturally extends to all of $\barm$. In particular, the almost
  complex structure $J$ smoothly extends to $\barm$, which gives rise
  to a (possibly degenerate) almost CR structure of hypersurface type
  on $\partial M$. This structure is integrable (CR) if the initial
  structure is c--projective.
\end{prop}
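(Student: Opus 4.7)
The plan is to fix a boundary point $x\in\partial M$, take the neighborhood $U$ and defining function $\rho$ supplied by Definition \ref{def2.2}, and use the smooth extension of $\hat\nabla:=\nabla+\frac{d\rho}{2\rho}$ to $U$ to extend first $J$ and then the c--projective equivalence class to $U$. The only data that still lives only on the interior once $\hat\nabla$ has been extended is the almost complex structure $J$, so the key step is to extend $J$.

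To extend $J$, I would work in a smooth local frame for $T\barm$ over $U$ and write the condition $\hat\nabla J=0$ on $U\cap M$ as the first order system
\begin{equation*}
\partial_i J^j_k = \hat\Ga^l_{ik}J^j_l - \hat\Ga^j_{il}J^l_k,
\end{equation*}
whose coefficients are smooth on $U$ by hypothesis. Parallel transport of $J$ by $\hat\nabla$ along smooth curves issuing from a fixed interior point therefore defines a smooth section $\tilde J$ of $\End(T\barm)|_U$ extending $J$; path--independence follows (after shrinking $U$ if necessary) because any two such extensions must agree on the connected open set $U\cap M$, where they are determined by the original $J$. The identity $\tilde J^2+\id=0$ at the base point propagates under parallel transport, so $\tilde J^2=-\id$ throughout $U$.

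Once $J$ has been extended, the torsion $T^{\hat\nabla}$ is smooth on $U$ and of type $(0,2)$ on the dense open subset $U\cap M$, hence of type $(0,2)$ on all of $U$; thus $\hat\nabla$ is a minimal complex linear connection on $U$. This defines an almost c--projective structure on $U$ extending $(J,[\nabla])$. The computation in the remark following Definition \ref{def2.2} shows that any two defining functions produce connections differing by a c--projective change by a one--form that is smooth up to the boundary, so the local extensions coming from different choices of $(U,\rho)$ patch into a globally defined extension of $(J,[\nabla])$ to $\barm$.

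For the boundary structure, the extended $J$ restricts to an endomorphism of $T\barm|_{\partial M}$ squaring to $-\id$. Since $T\partial M$ has real rank $2m-1$, the subspace $H:=T\partial M\cap J(T\partial M)$ has real rank at least $2m-2$; it is $J$--invariant (for $J(T\partial M\cap J(T\partial M))=J(T\partial M)\cap T\partial M$), hence has even rank and therefore exactly rank $2m-2$. This is an almost CR structure of hypersurface type on $\partial M$. Finally, if the original structure is c--projective, then the Nijenhuis tensor $N_J$ vanishes on $M$; since $N_J$ is a pointwise expression in $J$ and its first derivatives, the smooth extension of $J$ satisfies $N_J=0$ on all of $\barm$, and the induced CR structure on $\partial M$ is integrable.

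The main obstacle is the extension of $J$: one must argue that the parallel transport construction is well defined and produces a smooth tensor up to $\partial M$. This is handled by uniqueness of solutions to the above linear system with smooth coefficients, using that the extension is prescribed on the connected interior $U\cap M$. Everything else — the preservation of minimality, the patching, the rank of $H$, and the transfer of integrability — is a continuity argument from $M$ to $\barm$.
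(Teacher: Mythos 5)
Your proposal is correct and follows essentially the same route as the paper: extend $J$ by parallel transport for the smoothly extended connection $\hat\nabla$, use density of $U\cap M$ to get uniqueness, the identity $J\circ J=-\id$, and the vanishing of the Nijenhuis tensor in the integrable case, then use the defining-function computation to see that the extended c--projective class is well defined, and finally take $H=T\partial M\cap J(T\partial M)$ as the induced almost CR structure. The extra details you supply (path--independence of the parallel transport, the $(0,2)$--type of the extended torsion, the corank--one count for $H$) are all sound elaborations of steps the paper leaves implicit.
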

\begin{proof}
  As we have noted in \ref{2.1}, the linear connection
  $\hat\nabla=\nabla+\frac{d\rho}{2\rho}$ on $U\cap M$ satisfies
  $\hat\nabla J=0$. But since this connection extends smoothly to all
  of $U$, we can extend $J$ by parallel transport to all of $U$. Since
  $U\cap M$ is dense in $U$, this extension is uniquely determined by
  $J$ and satisfies $J\o J=-\id$ on all of $U$. If the initial almost
  complex structure $J$ is integrable then the same is true for the
  extended structure (since its Nijenhuis tensor vanishes on a dense
  subset).

  For $x\in\partial M$, we define $H_x:=T_x\partial M\cap
  J(T_x\partial M)$. Clearly, this defines a smooth distribution
  $H\subset T\partial M$ of corank one and the boundary value of $J$
  defines an almost complex structure on this distribution. Hence we
  have obtained an almost CR structure of hypersurface type on
  $\partial M$. It is well known that this structure is CR if $J$ is
  integrable.

  As we have seen above, any other defining function $\hat\rho$ can be
  written as $\hat\rho=e^f\rho$ for a function $f$ which is smooth up
  to the boundary and then
  $\tfrac{d\hat\rho}{2\hat\rho}=\tfrac{d\rho}{2\rho}+\tfrac12
  df$. Hence the connections associated to $\hat\rho$ and $\rho$ are
  c--projectively equivalent on all of $U$. Together with the
  extension of $J$, we thus get a well defined almost c--projective
  structure on $\barm$.
\end{proof}

Observe that to obtain the induced almost CR structure on the
boundary, one only needs the almost complex structure to extend, the
c--projective equivalence class of connections is not really used at
this point.

\subsection{On the boundary geometry}\label{2.2a} 
In the case of an integrable complex structure, we always obtain a CR
structure on the boundary $\partial M$. From the point of view of CR
geometry, it is natural to assume in addition that the structure is
(Levi--) non--degenerate. If the initial almost complex structure $J$
is not integrable, then, apart from non--degeneracy, some assumptions
on the asymptotics of the almost complex structure must be made. A
natural assumption would be partial integrability of the induced CR
geometry, but it turns out that for our purposes a slightly stronger
assumption will be suitable.

To formulate the necessary definitions, recall first that the Lie
bracket of vector fields induces a bilinear operator
$\Ga(H)\x\Ga(H)\to \Ga(T\partial M/H)$. This operator is immediately
seen to be bilinear over smooth functions and thus it is induced by a bundle
map $\Cal L:H\x H\to T\partial M/H$, which is called the
\textit{Levi--bracket}. 

\begin{definition}\label{def2.2a}
Let $\barm$ be a smooth manifold with boundary $\partial M$ and
interior $M$ and let $J$ be an almost complex structure on
$\barm$. Let $H\subset T\partial M$ be the induced almost CR structure
and $\Cal L$ its Levi--bracket. 

(1) The almost CR structure $H\subset T\partial M$ is called
  \textit{non--degenerate} if the value of $\Cal L$ at any point is a
  non--degenerate bilinear map.

(2) The structure is called \textit{partially integrable} if $\Cal L$
  is Hermitean in the sense that $\Cal L(\xi,\eta)=\Cal L(J\xi,J\eta)$
  for all $\xi,\eta\in H$.

(3) We say that the Nijenhuis tensor $\Cal N$ of $J$ has
  \textit{asymptotically tangential values} if, along $\partial M$,
  $\Cal N$ has values in $T\partial M\subset T\barm|_{\partial M}$.
\end{definition}

Now we can easily characterize these conditions in terms of local defining
functions.

\begin{lemma}\label{lem2.2a}
  Let $(\barm,J)$ be an almost complex manifold with boundary
  $\partial M$ and interior $M$, and let $\Cal N$ be the
  Nijenhuis tensor of $J$. For a local defining function $\rho$
  for the boundary, put $\th=-d\rho\o J$. 

  (1) The fact that $\Cal N$ has asymptotically tangential values is
  equivalent to either of the following two conditions on any local
  defining function $\rho$. 
\begin{itemize}
\item The exterior derivative $d\th$ is Hermitean on
  $T\barm|_{\partial M}$. 
\item For any minimal complex connection $\nabla$ on $\barm$, the
  $\binom02$--tensor field $\nabla d\rho$ is symmetric on
  $T\barm|_{\partial M}$. 
\end{itemize}

  (2) The induced almost CR structure on $\partial M$ is
non--degenerate if and only if for any local defining function $\rho$
the one--form $\th$ restricts to a contact form on $\partial M$. The
structure is partially integrable if and only if the restriction of
$d\th$ to $H\subset T\partial M$ is Hermitean. 
\end{lemma}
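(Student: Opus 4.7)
The plan is to reduce all three conditions in part (1) to the single algebraic condition $d\rho\o\Cal N=0$ on $T\barm|_{\partial M}$, which is visibly equivalent to asymptotic tangentiality of $\Cal N$ since $T\partial M=\ker(d\rho)$ along the boundary. For the bullet involving $\nabla d\rho$, the starting point is the standard identity
\[
d\alpha(\xi,\eta) = (\nabla_\xi\alpha)(\eta) - (\nabla_\eta\alpha)(\xi) + \alpha(T(\xi,\eta))
\]
valid for any $1$-form $\alpha$ and any linear connection with torsion $T$. Specialized to $\alpha=d\rho$ and combined with $d^2\rho=0$, this says that the antisymmetric part of $\nabla d\rho$ equals $-\tfrac12\,d\rho\o T$. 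For a minimal complex $\nabla$, the torsion $T$ is of type $(0,2)$ and is therefore, up to a universal nonzero factor, the Nijenhuis tensor $\Cal N$; hence symmetry of $\nabla d\rho$ on $T\barm|_{\partial M}$ is equivalent to $d\rho\o\Cal N=0$ there.

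For the bullet involving $d\th$, I would exploit $\nabla J=0$ to obtain $(\nabla_\xi\th)(\eta) = -(\nabla_\xi d\rho)(J\eta)$, substitute into the same formula applied to $\th$, and then use the $(0,2)$ identities $T(J\xi,\eta)=T(\xi,J\eta)=-JT(\xi,\eta)$ and $T(J\xi,J\eta)=-T(\xi,\eta)$. A short manipulation yields
\[
d\th(\xi,\eta)-d\th(J\xi,J\eta) = 4\,\th(T(\xi,\eta)),
\]
and since $\th(T(\xi,\eta)) = -d\rho(JT(\xi,\eta)) = d\rho(T(J\xi,\eta))$, the Hermitean property of $d\th$ on $T\barm|_{\partial M}$ is again equivalent to $d\rho\o\Cal N=0$ there. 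Combined with the previous reduction, this proves (1).

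For part (2), I would first observe that $\th|_{\partial M}$ annihilates $H$: if $v\in H$ then by definition $Jv\in T\partial M=\ker(d\rho)$, so $\th(v)=-d\rho(Jv)=0$. Moreover $\th|_{T\partial M}$ cannot vanish identically, else $J$ would preserve $T\partial M$, which is impossible by parity of dimension. Hence $\th|_{T\partial M}$ is a nowhere-zero $1$-form with kernel exactly $H$ and descends to a trivialization $\tilde\th$ of $(T\partial M/H)^*$. For sections $\xi,\eta$ of $H$, extended to vector fields near $\partial M$, both $\th(\xi)$ and $\th(\eta)$ vanish along $\partial M$; since $\xi,\eta$ are tangential there, the derivatives $\xi\,\th(\eta)$ and $\eta\,\th(\xi)$ also vanish along $\partial M$. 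The Cartan formula then gives $d\th(\xi,\eta)|_{\partial M} = -\th([\xi,\eta])|_{\partial M} = -\tilde\th(\Cal L(\xi,\eta))$, identifying $d\th|_H$ with $-\tilde\th\o\Cal L$. From this, non-degeneracy (respectively Hermitean-ness) of $\Cal L$ matches that of $d\th|_H$, and non-degeneracy of $d\th|_H$ together with $\th|_{T\partial M}\neq 0$ is precisely the contact condition on $\partial M$.

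The main obstacle is purely bookkeeping: keeping track of the exact $(0,2)$ torsion identities, the constant of proportionality between $T$ and $\Cal N$, and the sign in $\th=-d\rho\o J$. Once those conventions are pinned down, each equivalence follows from a short direct calculation.
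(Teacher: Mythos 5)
Your proposal is correct and follows essentially the same route as the paper: both parts reduce the conditions in (1) to the vanishing of $d\rho\circ\Cal N$ along $\partial M$ (using $0=dd\rho$ for the $\nabla d\rho$ bullet), and both prove (2) by identifying the restriction of $d\th$ to $H$ with the Levi bracket via the Cartan formula. The only minor divergence is in the $d\th$ bullet of (1), where you compute with the minimal connection and $\nabla J=0$ while the paper does a direct Lie--bracket computation against the definition of $\Cal N$; your identity $d\th(\xi,\eta)-d\th(J\xi,J\eta)=4\,\th(T(\xi,\eta))$ is equivalent to the paper's $d\th(J\xi,\eta)+d\th(\xi,J\eta)=d\rho(\Cal N(\xi,\eta))$ and checks out.
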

\begin{proof}
By construction, for each $x\in\partial M$, the kernel of
$\th(x)|_{T_x\partial M}:T_x\partial M\to\Bbb R$ coincides with
$H_x$. For sections $\xi,\eta\in\Ga(H)$ we thus get
$d\th(\xi,\eta)=-\th([\xi,\eta])$, so the restriction of $d\th$ to
$H\x H$ represents the Levi--bracket $\Cal L$. Since non--degeneracy
of the restriction of $d\th$ to $\ker(\th)$ is equivalent to $\th$
being a contact form, this implies (2).

(1) Since the Nijenhuis tensor is conjugate linear in both arguments,
its values at each point $x\in\barm$ form a complex subspace of
$T_x\barm$. Hence the fact that $\Cal N$ has asymptotically tangential
values is equivalent to $\Cal N$ having values in $H$ along the
boundary. This is in turn equivalent to the insertion of $\Cal N$ into
$d\rho$ or into $\th$ vanishing along $\partial M$.

Now if $\nabla$ is a complex connection on $T\barm$, then we can
compute $0=dd\rho$ as the sum of the alternation of $\nabla d\rho$ and
a term in which the torsion of $\nabla$ is inserted into $d\rho$. If
$\nabla$ is minimal, this torsion is a non--zero multiple of $\Cal N$,
which implies the equivalence to the second condition.

To complete the proof, we show that for all $\xi,\eta\in\frak
X(\barm)$ we get
$$
d\th(J\xi,\eta)+d\th(\xi,J\eta)=d\rho(\Cal N(\xi,\eta)).
$$ 
Since both sides are bilinear over smooth functions, we may without
loss of generality assume that $d\rho(\xi)=\th(J\xi)$ 
and $\th(\xi)=-d\rho(J\xi)$ are constant and likewise for
$\eta$. Assuming this, we get
$d\th(J\xi,\eta)=-\th([J\xi,\eta])=d\rho(J[J\xi,\eta])$ and similarly
for the second term in the left hand side. On the other hand, we also
see that $d\rho([\xi,\eta])=-dd\rho(\xi,\eta)=0$ and likewise 
$d\rho([J\xi,J\eta])=0$, so the claim follows from the definition of
the Nijenhuis tensor.
\end{proof}

\subsection{Volume asymptotics}\label{2.3} 
We next analyze the effect of a c--projective change of connection on
the induced connections on density bundles. From the interpretation of
the c--projective change law via complex linear extensions of
one--forms, one easily concludes that on the top exterior power
$\La^m_{\Bbb C}TM$, the change of connection is given by
$$
\hat\nabla_\xi s=\nabla_{\xi}s+(m+1)(\Up(\xi)-i\Up(J\xi))s,
$$
which also explains the conventions for density bundles we have
chosen. This immediately implies that for $w,w'\in\Bbb R$ with
$w-w'\in\Bbb Z$ and $s\in\Ga(\Cal E(w,w'))$ we have
\begin{equation}
  \label{eq:cdens-trans}
  \hat\nabla_\xi s=\nabla_{\xi}s+((w+w')\Up(\xi)-(w-w')i\Up(J\xi))s,
\end{equation}
while for $\si\in\Ga(\Cal E(w))$ with $w\in\Bbb R$ we obtain
\begin{equation}
  \label{eq:rdens-trans}
\hat\nabla_\xi\si=\nabla_{\xi}\si+w\Up(\xi)\si.   
\end{equation}

Having these results at hand, the relation between c--projective
compactness and volume asymptotics can be analyzed as it is done for
projective compactness in \cite{proj-comp}. We call a connection
$\nabla$ \textit{special} if and only if there is a non--vanishing
section $\si$ of some (or equivalently) any real density bundle $\Cal
E(w)$ with $w\neq 0$, which is parallel for $\nabla$. The notion of
volume asymptotics as introduced in Definition 2.2 of \cite{proj-comp}
can be used in our setting. Also, the relation to (real) defining
densities continues to hold:

\begin{prop}\label{prop2.3}
  let $\barm$ be a smooth manifold of real dimension $2m$ with
  boundary $\partial M$ and interior $M$, and let $\nabla$ be a
  special linear complex connection on $TM$.

  (1) If $\nabla$ is c--projectively compact, then it has volume
  asymptotics of order $m+1$ in the sense of Section 2.2 of
  \cite{proj-comp}. Moreover, any non--zero section of $\Cal E(2)$
  which is parallel for $\nabla$ extends by zero to a defining density
  for $\partial M$.

  (2) Conversely, assume that the almost c--projective structure on $M$
  defined by $\nabla$ admits a smooth extension to $\barm$ and that
  there is a defining density $\tau\in\Ga(\Cal E(2))$ for $\partial M$
  such that $\tau|_M$ is parallel for $\nabla$. Then $\nabla$ is
  c--projectively compact.
\end{prop}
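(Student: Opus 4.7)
The plan is to use the density transformation law \eqref{eq:rdens-trans} with $\Up=\frac{d\rho}{2\rho}$ and $w=2$. Applied to any $\nabla$--parallel $\si\in\Ga(\Cal E(2))$, it gives $\hat\nabla\si=\frac{d\rho}{\rho}\si$ on $U\cap M$, and hence, by a direct computation, $\hat\nabla(\rho^{-1}\si)=0$ on the interior. This single identity is the engine driving both implications.

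For part (1), fix a nonvanishing $\nabla$--parallel $\si\in\Ga(\Cal E(2))$ (guaranteed by specialness after passing to appropriate real roots, as in the definition). Then $\rho^{-1}\si$ is a nonvanishing $\hat\nabla$--parallel section of the line bundle $\Cal E(2)|_{U\cap M}$. Since the extended $\hat\nabla$ is smooth on all of $U$, its curvature is a smooth two--form which vanishes on the dense subset $U\cap M$ (where the parallel section lives), and therefore vanishes on all of $U$. Hence, after shrinking $U$ to be contractible, the extended $\hat\nabla$ on $\Cal E(2)|_U$ admits a smooth nonvanishing parallel section $\tilde\si$; by a constant rescaling on components of $U\cap M$ we may arrange $\tilde\si=\rho^{-1}\si$ there. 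Therefore $\si=\rho\tilde\si$ on $U\cap M$ is the restriction of the smooth section $\rho\tilde\si$ on $U$, which vanishes exactly to first order on $\partial M$; i.e.~$\si$ extends by zero to a defining density. Via the identification $\Cal E(2)^{\otimes-(m+1)}=\Cal E(-2m-2)$ of Section \ref{2.1}, the associated $\nabla$--parallel real volume density then blows up like $\rho^{-(m+1)}$ at the boundary, which is precisely volume asymptotics of order $m+1$ in the sense of Section 2.2 of \cite{proj-comp}.

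For part (2), pick a smooth representative $\bar\nabla$ of the extended c--projective class on a neighborhood $U$ of a boundary point; on $U\cap M$ it satisfies $\bar\nabla=\nabla+\Up$ for some $\Up\in\Om^1(U\cap M)$. Applying \eqref{eq:rdens-trans} to the $\nabla$--parallel $\tau$ gives $\bar\nabla\tau=2\Up\otimes\tau$. Since $\tau$ is a defining density we may write $\tau=\rho\al$ for a smooth nonvanishing $\al\in\Ga(\Cal E(2))|_U$, and the left hand side $d\rho\otimes\al+\rho\bar\nabla\al$ is manifestly smooth on $U$. Dividing by $\tau=\rho\al$ and writing $\bar\nabla\al=\om\otimes\al$ for a smooth one--form $\om$ on $U$, we obtain $\Up=\frac{d\rho}{2\rho}+\frac12\om$ on $U\cap M$. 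Consequently the connection $\hat\nabla:=\nabla+\frac{d\rho}{2\rho}$ differs from $\bar\nabla$ by the c--projective change associated to the one--form $-\frac12\om$, which is smooth on $U$. Hence $\hat\nabla$ itself extends smoothly to $U$, giving c--projective compactness.

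The main obstacle is the flatness--and--extension step in part (1): one must verify carefully that the smooth line bundle connection $\hat\nabla$ on $\Cal E(2)|_U$ really inherits flatness from the existence of a parallel section on the dense subset $U\cap M$, and then identify a globally defined smooth $\hat\nabla$--parallel section on $U$ with the interior quantity $\rho^{-1}\si$, which itself blows up at $\partial M$. Once this flatness--plus--matching step is secured, the identification of $\si$ with a defining density, and the translation into the volume asymptotics of Section 2.2 of \cite{proj-comp}, are routine given the density conventions of Section \ref{2.1}.
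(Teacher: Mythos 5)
Your proof is correct and is essentially the argument the paper intends: the paper's own proof merely cites Proposition 2.3 of \cite{proj-comp} ``with trivial modifications'', and your steps (the density transformation law \eqref{eq:rdens-trans} making $\rho^{-1}\si$ parallel for the extended $\hat\nabla$ on $\Cal E(2)$, extension of that parallel section across the boundary, and linearity of the c--projective change in $\Up$ for the converse) are precisely that adaptation. One small tidy-up: when matching $\tilde\si$ with $\rho^{-1}\si$, shrink $U$ so that $U\cap M$ is connected rather than rescaling ``on components'', since a single parallel section $\tilde\si$ on $U$ cannot absorb different constants on different components of $U\cap M$.
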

\begin{proof}
With some trivial modifications, the proof of Proposition 2.3 of
\cite{proj-comp} applies. 
\end{proof}

\subsection{C--projective compactness for metrics}\label{2.5}
In the setting of projective compactness, one always deals with
torsion--free connections, so projective compactness of a
pseudo--Riemannian metric can be defined as projective compactness of
its Levi--Civita connection. Looking for a c--projective analog of
this concept, we will only consider pseudo--Riemannian metrics $g$
which are Hermitean with respect to the given almost complex
structure, i.e.~such that $g(J\xi,J\eta)=g(\xi,\eta)$ for all $\xi$
and $\eta$. In order to get a sensible concept, we have to associate
to $g$ a connection which is complex and minimal. The latter two
conditions specify the torsion of the connection, so if the connection
also is required to preserve $g$, it is uniquely determined by these
properties (if it exists). This motivates the following definitions.

\begin{definition*}
(1) For an almost complex manifold $(N,J)$, a pseudo--Riemannian
  metric $g$ which is Hermitian for $J$ is called \textit{admissible}
  if and only if there is a linear connection $\nabla$ on $TN$ which
  is minimal and preserves both $J$ and $g$. If such a connection
  exists then we know from above that it is uniquely determined and we
  call it the \textit{canonical connection} associated to $g$. 

  (2) Consider a smooth manifold $\barm$ with boundary $\partial M$
  and interior $M$, and an almost complex structure $J$ on $M$. An
  admissible Hermitean metric $g$ on $(M,J)$ is called
  \textit{c--projectively compact} if and only if its canonical
  connection is c--projectively compact in the sense of Definition
  \ref{def2.2}.
\end{definition*}

Now we can give a description of admissible metrics in terms of the
Gray--Hervella classification (see \cite{G-H}) of almost Hermitean
structures. 

\begin{prop}\label{prop2.5}
  Let $g$ be a Hermitean pseudo--Riemannian metric on an almost
  complex manifold $(N,J)$. Then $g$ is admissible if and only if it
  is quasi--K\"ahler (i.e.~of class $\Cal W_1\oplus\Cal W_2$) in the
  sense of \cite{G-H}. 

In particular, nearly K\"ahler metrics (of arbitrary signature) are
admissible, and if $J$ is integrable, then $g$ is admissible if and
only if it is (pseudo--)K\"ahler.
\end{prop}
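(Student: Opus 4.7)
My plan is to translate the existence of a canonical connection into an algebraic problem for the difference tensor $A := \nabla - \nabla^g$ between any candidate $\nabla$ and the Levi--Civita connection $\nabla^g$ of $g$. The three required properties translate into: (i) $A(\xi)$ is skew--symmetric for $g$; (ii) $[A(\xi),J] = -\nabla^g_\xi J$; (iii) $T(\xi,\eta) := A(\xi)\eta - A(\eta)\xi$ is of type $(0,2)$. Two standard identities, both immediate from $\nabla^g g = 0$ combined with $J^2 = -\id$ and the Hermiticity of $g$, state that the endomorphism $\nabla^g_\xi J$ is itself $g$--skew and anti--commutes with $J$. Splitting $A(\xi) = A^+(\xi) + A^-(\xi)$ into $J$--commuting and $J$--anti--commuting parts, condition (ii) involves only $A^-$ and uniquely forces
\[
A^-(\xi) = -\tfrac12 J\nabla^g_\xi J.
\]

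The natural candidate is therefore $\nabla_\xi \eta := \nabla^g_\xi \eta - \tfrac12 J(\nabla^g_\xi J)\eta$, i.e.\ $A^+ \equiv 0$. I would check directly that this connection always preserves $g$ (using the $g$--skewness of $\nabla^g J$) and $J$ (using $J \, \nabla^g J \, J = \nabla^g J$, which follows from the anti--commutation). A short computation then gives
\[
T(\xi,\eta) = \tfrac12 J\bigl((\nabla^g_\eta J)\xi - (\nabla^g_\xi J)\eta\bigr),
\]
and unpacking the condition $T(J\xi,\eta) = -JT(\xi,\eta)$ (which characterises type $(0,2)$) reduces precisely to the identity $J(\nabla^g_{J\xi} J) = \nabla^g_\xi J$. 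This is exactly the Gray--Hervella characterisation of $\Cal W_1 \oplus \Cal W_2$, and so yields admissibility from quasi--K\"ahler together with an explicit formula for the canonical connection.

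For uniqueness and the converse direction I would proceed as follows. Two admissible connections differ by a tensor $B$ with $B(\xi,\cdot)$ simultaneously $g$--skew and $J$--linear, and with $B$ symmetric in its two slots; a standard Koszul triple--swap using these three symmetries forces $B = 0$. For the converse, if a canonical connection had non--trivial $A^+ \in T^*M \otimes \frak u(TM,J,g)$, a short $U(m)$--type decomposition shows the map $A^+ \mapsto T^+(\xi,\eta) := A^+(\xi)\eta - A^+(\eta)\xi$ has no $(0,2)$ image, so $A^+$ cannot cancel any non--$(0,2)$ contribution of $T^-$; hence admissibility forces both $A^+ = 0$ and the quasi--K\"ahler identity. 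The supplementary assertions follow at once: nearly K\"ahler metrics form the subclass $\Cal W_1 \subset \Cal W_1 \oplus \Cal W_2$; and if $J$ is integrable the Nijenhuis tensor vanishes, so the $(0,2)$ part of the torsion vanishes, giving $\nabla = \nabla^g$ and $\nabla^g J = 0$, i.e.\ $g$ is pseudo--K\"ahler. The main subtlety is the $U(m)$--representation--theoretic step in the converse, namely that no $J$--commuting $g$--skew $A^+$ can produce a type--$(0,2)$ torsion contribution; this is exactly the kind of fact that the parabolic--geometry machinery of \cite{CEMN} handles most cleanly via the normality condition on the Cartan connection.
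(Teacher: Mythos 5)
Your forward direction and your uniqueness argument are sound, and the forward direction is in substance the paper's own computation: normalising the contorsion so that it anti--commutes with $J$ is the same as the paper's reduction to a contorsion $A$ with $A(\xi,J\eta)=-JA(\xi,\eta)$, your candidate $\nabla^g_\xi\eta-\tfrac12 J(\nabla^g_\xi J)\eta$ is exactly the connection the paper arrives at, and your identity $J(\nabla^g_{J\xi}J)=\nabla^g_\xi J$ is the Gray--Hervella condition in the form the paper extracts from $\nabla^g\om(\xi,\eta,\ze)=2g(A(\xi,\eta),J\ze)$. The supplementary assertions are also handled correctly.

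The gap is in your converse (admissible $\Rightarrow$ quasi--K\"ahler). You reduce it to the claim that a nonzero $A^+\in T^*N\otimes\frak u(TN,J,g)$ ``cannot cancel any non--$(0,2)$ contribution of $T^-$'' \emph{because} the map $A^+\mapsto T^+$ has no $(0,2)$ image. The premise is true (the $(0,2)$--part of the torsion is connection--independent, so $T^+(\xi,\eta)=A^+(\xi)\eta-A^+(\eta)\xi$ has vanishing $(0,2)$--part), but the inference is a non sequitur: precisely because $T^+$ has no $(0,2)$--component it lies entirely in the complement of the $(0,2)$--forms, which is exactly where the unwanted part of $T^-$ lives, so $T^+$ is a priori perfectly suited to cancelling it. What you actually need is that the image of $T^*N\otimes\frak u(TN,J,g)$ under skew--symmetrisation meets the $U(m)$--submodule of $\La^2T^*N\otimes TN$ swept out by the non--$(0,2)$--part of $T^-$ (as $\nabla^g J$ varies) only in zero; that is a genuine decomposition or computation, not a formal consequence of your premise, and it has nothing to do with the normality condition on the Cartan connection of the c--projective structure, which concerns the curvature normalisation of the parabolic geometry rather than the existence of metric connections. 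The paper sidesteps all of this: for the converse it writes the contorsion of the assumed canonical connection explicitly in terms of its torsion $T=-\tfrac14\Cal N$ via the Koszul--type formula $g(A(\xi,\eta),J\ze)=\tfrac12\bigl(-g(T(\xi,J\ze),\eta)+g(T(\xi,\eta),J\ze)-g(T(\eta,J\ze),\xi)\bigr)$ and verifies by hand, using conjugate--linearity of $T$ in both slots, that $A(\xi,\eta)+A(J\xi,J\eta)=0$, which is the quasi--K\"ahler identity. You should either carry out that computation or prove the $\frak u(m)$--intersection statement honestly; as written the converse does not go through.
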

\begin{proof}
Denoting by $\om$ the fundamental two--form of $g$, so
$\om(\xi,\eta):=-g(\xi,J\eta)$, the Hervella--Gray classification is
based on $\nabla^g\om$, where $\nabla^g$ denotes the Levi--Civita
connection of $g$. Now it is well known that there is a connection
$\nabla$ on $N$ for which $\nabla g=0$ and $\nabla J=0$ and thus
$\nabla\om=0$. For such a connection, consider the contorsion,
i.e.~the $\binom12$--tensor field $A$ defined by
$\nabla_\xi\eta=\nabla^g_\xi \eta+A(\xi,\eta)$. Since both $\nabla^g$
and $\nabla$ preserve $g$, we conclude that
$g(A(\xi,\eta),\ze)=-g(A(\xi,\ze),\eta)$ for all $\xi,\eta,\ze\in\frak
X(M)$.

Moreover, if we add a $\binom12$--tensor field to $A$ which which has
the same skew symmetry property and in addition is complex linear in
the second variable, this will also lead to a connection which
preserves both $g$ and $J$. This means that we can subtract the
complex linear part
$A_+(\xi,\eta):=\tfrac12(A(\xi,\eta)-JA(\xi,J\eta))$ from $A$, without
loosing the property that the resulting connection $\nabla$ preserves
both $g$ and $J$. Hence from now on we assume without loss of
generality that $A(\xi,J\eta)=-JA(\xi,\eta)$. 

Now we compute $\nabla^g\om(\xi,\eta,\ze)$ as 
$$
-\xi\cdot
g(\eta,J\ze)+g(\nabla^g_\xi\eta,J\ze)+g(\eta,J\nabla^g_\xi\ze).
$$ 
Rewriting the last summand as $-g(J\eta,\nabla^g_\xi\ze)$ we can
express each $\nabla^g$ as $\nabla+A$ and using that $\nabla J=0$ and
$\nabla g=0$, we obtain 
$$
\nabla^g\om(\xi,\eta,\ze)=g(A(\xi,\eta),J\zeta)-g(J\eta,A(\xi,\ze)).
$$ 
The last term (including the sign) can be written as
$g(A(\xi,J\eta),\ze)$ and using conjugate linearity of $A$ in the
second variable, we end up with 
$$
\nabla^g\om(\xi,\eta,\ze)=2g(A(\xi,\eta),J\zeta). 
$$
According to Theorem 3.1 in \cite{G-H}, $g$ is quasi--K\"ahler if and
only if 
$$
\nabla^g\om(\xi,\eta,\ze)+\nabla^g\om(J\xi,J\eta,\ze)=0
$$ 
for all $\xi,\eta,\ze$. This is clearly equivalent to
$A(\xi,\eta)+A(J\xi,J\eta)=0$ for all $\xi,\eta\in\frak X(N)$. But this
implies that $A(J\xi,\eta)=A(\xi,J\eta)=-JA(\xi,\eta)$, so $A$ must be
conjugate linear in both variables. Hence
$T(\xi,\eta)=A(\xi,\eta)-A(\eta,\xi)$ is also conjugate linear in both
variables, and of course, $T$ is the torsion of $\nabla$. Hence $T$
must coincide with $-\tfrac14\Cal N$. This shows that any
quasi--K\"ahler metric is admissible and that in the integrable case,
$\nabla=\nabla^g$, so $g$ is K\"ahler.

Conversely, if $g$ is admissible and $\nabla$ is its canonical
connection with torsion $T=-\frac14\Cal N$, then one can explicitly
compute the contorsion tensor $A$ via 
$$
g(A(\xi,\eta),J\zeta)=\tfrac12(-g(T(\xi,J\ze),\eta)+
g(T(\xi,\eta),J\ze)-g(T(\eta,J\ze),\xi)). 
$$
Using that $T$ is conjugate linear in both arguments, one immediately
verifies that the right hand side changes sign if one either replaces
$(\xi,\eta)$ by $(J\xi,J\eta)$ or $(\eta,\ze)$ by $(J\eta,J\ze)$. Of
course, this implies that $A(\xi,\eta)+A(J\xi,J\eta)=0$ and that $A$
is conjugate linear in the second variable, so $g$ is quasi--K\"ahler.
\end{proof}

\subsection{A sufficient condition for c--projective
  compactness}\label{2.6} Motivated by the results for projective
compactness in \cite{proj-comp} and \cite{proj-comp2}, we describe an
asymptotic form for a Hermitean metric which is sufficient for
c--projective compactness. We start with a manifold
$\barm=M\cup\partial M$ with boundary, which is endowed with an almost
complex structure $J$. As observed in \ref{2.2} the induces an almost
CR structure on $\partial M$, which is assumed to be
non--degenerate. Moreover, we assume that the Nijenhuis tensor $\Cal
N$ has asymptotically tangential values, which by Lemma \ref{lem2.2a}
implies that the almost CR structure on $\partial M$ is partially
integrable.

Now suppose that $\rho$ is a local defining function for $\partial M$,
and consider the one--form $\th:=-d\rho\o J$ as in \ref{2.2a}, which
in our current setting is smooth up to the boundary. By Lemma
\ref{lem2.2a} our assumptions imply that $\th$ restricts to a contact
form on $\partial M$ and that $d\th$ is a Hermitean on
$T\barm|_{\partial M}$ and non--degenerate on $H$. In particular, the
restriction of $d\th$ to $H$ is the imaginary part of a
non--degenerate Hermitean form, so it has a well defined signature
$(p,q)$. Now we assume that $g$ is a pseudo--Riemannian metric on the
interior $M$ which is Hermitean for $J$ and has signature $(p+1,q)$ or
$(p,q+1)$. The asymptotic form we consider is that, locally near the
boundary, we can write
\begin{equation}
  \label{asymp-form}
  g=C\left(\frac{d\rho^2}{\rho^2}+\frac{\th^2}{\rho^2}\right)+\frac{h}{\rho}.  
\end{equation}
Here $h$ is a Hermitean form which admits a smooth extension to the
boundary such that, along the boundary, we have
$h(\xi,J\ze)=Cd\th(\xi,\ze)$, whenever $\ze$ lies in the CR
subspace. In particular, the boundary value of $h$ is non--degenerate
on the CR subspace.

Similarly to the case of projective compactness of order $2$, this
asymptotic form is independent of the defining function. From
\ref{2.2}, we see that for $\hat\rho=e^f\rho$ we get
$\tfrac{d\hat\rho}{\hat\rho}=\tfrac{d\rho}{\rho}+df$ and
$\tfrac{\hat\th}{\hat\rho}=\tfrac{\th}{\rho}-df\o J$. Using this, a
simple direct computation shows that an asymptotic form as in
\eqref{asymp-form} with respect to $\rho$ implies an analogous form
with respect to $\hat\rho$ with the same constant $C$ and with 
$$
\hat h=e^fh+2C\left(-df\odot d\hat\rho+(df\o
  J)\odot\hat\th\right)+C\hat\rho\left(df^2+(-df\o J)^2\right).
$$
On the other hand, applying the exterior derivative to the formula
for $\hat\th$, one gets
$$
d\hat\th=e^fd\th+df\wedge\hat\th-d\hat\rho\wedge(df\o J)-\hat\rho
d(df\o J).
$$
Along the boundary and for $\ze$ in the CR subspace, we thus get 
$$
\hat h(\xi,J\zeta)=e^fh(\xi,J\zeta)-Cdf(J\zeta)d\hat\rho(\xi)-C 
df(\zeta)\hat\th(\xi),
$$
and this coincides with $Cd\hat\th(\xi,\zeta)$.

Now we can prove our first main result, namely that such a form is
sufficient for c--projective compactness.

\begin{thm}\label{thm2.6}
  Let $\barm$ be a smooth manifold with boundary $\partial M$ and
  interior $M$. Let $J$ be an almost complex structure on $\barm$,
  such that $\partial M$ is non--degenerate and the Nijenhuis tensor
  $\Cal N$ of $J$ is asymptotically has tangential values. Let $g$ be
  an admissible pseudo--Riemannian Hermitean metric on $M$.

  For a local defining function $\rho$ for the boundary defined on an
  open subset $U\subset\barm$, put $\th=-d\rho\o J$ and, given a
  constant $C$, define a Hermitean $\binom02$--tensor field
  $h_{\rho,C}$ on $U\cap M$ by
$$
h_{\rho,C}(\xi,\eta):=\rho g(\xi,\eta)
-\tfrac{C}{\rho}\left(d\rho(\xi)d\rho(\eta)+\th(\xi)\th(\eta)\right). 
$$
  Suppose that for each $x\in\partial M$ there are an open neighborhood
  $U$ of $x$ in $\barm$, a local defining function $\rho$ defined on
  $U$, and a non--zero constant $C$ such that
\begin{itemize}
\item $h_{\rho,C}$ admits a smooth extension to all of $U$
\item for all $\xi,\ze\in\frak X(U)$ with $d\rho(\ze)=\th(\ze)=0$, the function
$h(\xi,J\ze)$ approaches $Cd\th(\xi,\ze)$ at the boundary.
\end{itemize}
   Then $g$ is c--projectively compact.
\end{thm}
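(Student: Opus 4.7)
The plan is to apply Proposition \ref{prop2.3}(2), which reduces c--projective compactness of the canonical connection $\nabla$ of $g$ to two separate statements: (i) the almost c--projective structure defined by $\nabla$ extends smoothly to $\barm$ near each boundary point, and (ii) there is a section $\tau\in\Ga(\Cal E(2))$, parallel for $\nabla$ on $M$, which extends to a defining density for $\partial M$.

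For part (ii), use that $\nabla g=0$ forces $\nabla$ to preserve the Riemannian volume density. Fix a smooth frame $\{N,T,E_\al\}$ on $U$ dual to a smooth coframe $\{d\rho,\th,\om^\al\}$ chosen so that $T=JN$ and the $E_\al$ span $\ker d\rho\cap\ker\th$; this last subbundle is smooth, $J$--invariant, and restricts to $H$ along $\partial M$. The asymptotic form then exhibits the matrix of $g$ in this frame as a block matrix whose $(N,T)$--block equals $C\rho^{-2}I_2+O(\rho^{-1})$, whose $E$--block equals $\rho^{-1}h|_{\ker d\rho\cap\ker\th}+O(1)$, and whose off--diagonal blocks are of lower order. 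Non--degeneracy of the boundary value of $h$ on $H$ gives $|\det g|=\rho^{-(2m+2)}f$ with $f$ smooth and nowhere zero up to $\partial M$, so an appropriate real root of $\vol_g$ produces the required $\tau\in\Ga(\Cal E(2))$.

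For part (i), I would show directly that $\hat\nabla:=\nabla+\tfrac{d\rho}{2\rho}$, defined via \eqref{cpdef} with $\Up=\tfrac{d\rho}{2\rho}$, extends smoothly to all of $U$; this immediately implies that the c--projective structure extends and, in fact, yields c--projective compactness on the nose. By c--projective equivalence $\hat\nabla$ is automatically complex and has the same torsion $-\tfrac14\Cal N$ as $\nabla$; the latter extends smoothly to $\barm$ by the tangential--values hypothesis on $\Cal N$ (Lemma \ref{lem2.2a}). It remains to check the symmetric part of $\hat\Ga^c_{ab}$. Writing the Koszul--type characterization of the canonical connection in the adapted frame expresses $\Ga^c_{ab}$ in terms of entries of $g$, $g^{-1}$, their $E_A$--derivatives, and components of $\Cal N$. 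Substituting the asymptotic form and tracking orders in $\rho$, the singular contributions arise solely from $N$--derivatives of the $\rho^{-2}$ and $\rho^{-1}$ blocks contracted with the corresponding singular parts of $g^{-1}$. The boundary identity $h(\xi,J\zeta)=C\,d\th(\xi,\zeta)$ on $H$ makes the mixed cross--terms between the two singular blocks combine correctly, and one computes the $\rho^{-1}$--coefficient of $\Ga^c_{ab}$ to be exactly $-\tfrac12\bigl(\de^c_a d\rho_b+\de^c_b d\rho_a+J^c_a\th_b+J^c_b\th_a\bigr)$, which is the negative of the c--projective change produced by $\Up=\tfrac{d\rho}{2\rho}$.

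The main obstacle is this last verification: matching the singular part of $\Ga^c_{ab}$ to the c--projective change by $\tfrac{d\rho}{2\rho}$. The hypothesis $h(\cdot,J\cdot)|_H=C\,d\th|_H$ enters essentially in producing the correct coefficients of the mixed--block cross--terms, while the tangential--values hypothesis on $\Cal N$ prevents the torsion correction from producing singular pieces when contracted with the $\rho^{-2}$ and $\rho^{-1}$ parts of $g^{-1}$. Beyond these two points, the remaining analysis is a tedious but routine bookkeeping of orders in $\rho$.
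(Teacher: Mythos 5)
Your core argument -- verifying directly, via a Koszul-type computation in an adapted frame built from $d\rho$, $\th$ and $\ker d\rho\cap\ker\th$, that $\hat\nabla=\nabla+\tfrac{d\rho}{2\rho}$ extends smoothly, with the hypothesis $h(\cdot,J\cdot)=C\,d\th(\cdot,\cdot)$ on the distinguished subbundle cancelling the mixed cross-terms between the $\rho^{-2}$- and $\rho^{-1}$-blocks and the tangential-values condition on $\Cal N$ controlling the torsion contributions -- is exactly the paper's proof, and your identification of the required singular part $-\tfrac1{2\rho}\bigl(\de^c_a\rho_b+\de^c_b\rho_a+J^c_a\th_b+J^c_b\th_a\bigr)$ of the connection coefficients is the correct target. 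The only structural remark is that your detour through Proposition \ref{prop2.3}(2) is redundant: once $\hat\nabla$ is shown to extend smoothly you already have c--projective compactness by Definition \ref{def2.2}, so the volume-density/determinant step does no work; the paper skips it and instead organizes the same Koszul computation by pairing $\hat\nabla_\xi\eta$ against a normalized field $\ze_0$ (with $d\rho(\ze_0)=1$, $\th(\ze_0)=0$, $h(\ze_0,\cdot)=0$ on $\ker d\rho\cap\ker\th$) and against sections of that subbundle, which keeps the order-counting you describe as ``routine bookkeeping'' manageable.
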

\begin{proof}
  This is parallel to the proof for projective compactness in Theorem
  2.6 of \cite{proj-comp}, and we will partly refer to that proof and
  emphasize the differences. 

  Let $\nabla$ be the canonical connection for $g$ and let
  $\hat\nabla$ be the c--projectively modified connection
  corresponding to $\Up=\frac{d\rho}{2\rho}$. We have to show that for
  arbitrary vector fields $\xi$ and $\eta$ which are smooth on all of
  $U$, also $\hat\nabla_\xi\eta$ admits a smooth extension from $U\cap
  M$ to all of $U$. In order to prove this, it suffices to do the
  following. We first show $d\rho(\hat\nabla_\xi\eta)$ admits a smooth
  extension to the boundary. Next, we prove that for any $\ze\in\frak
  X(U)$ such that $d\rho(\ze)$ and $\th(\ze)$ vanish identically,
  $h(\hat\nabla_\xi\eta,\ze)$ admits a smooth extension to all of
  $U$. Since we can write $\th(\hat\nabla_\xi\eta)$ as
  $-d\rho(\hat\nabla_\xi J\eta)$, these two facts imply that the
  coordinate functions of $\hat\nabla_\xi\eta$ with respect to an
  appropriate frame for $T\barm|_U$ admit a smooth extension to the
  boundary and hence the result.

  Shrinking $U$ if necessary, we may assume that, on $U$, $d\rho$ is
  nowhere vanishing and $h$ is non--degenerate on
  $\ker(d\rho)\cap\ker(\th)$.  Extend $d\rho$ and $\th$
  to a coframe for $\barm$ on $U$ and take the first element
  $\tilde\ze_0$ in the dual frame. Then non--degeneracy of $h$ on
  $\ker(d\rho)\cap\ker(\th)$ implies that we can add a section of this
  subbundle to $\tilde\ze_0$ to obtain a vector field $\ze_0\in\frak
  X(U)$ such that $d\rho(\ze_0)\equiv 1$, $\th(\ze_0)\equiv 0$ and
  $h(\xi,\ze_0)=0$, whenever $\xi\in\frak X(U)$ satisfies
  $d\rho(\xi)=\th(\xi)=0$.

  Now an arbitrary vector field $\xi\in\frak X(U)$ can be written as
  $\xi=d\rho(\xi)\ze_0+\th(\xi)J\ze_0+\xi_H$, where
  $d\rho(\xi_H)=\th(\xi_H)=0$. Inserting into the defining equation
  for $h$ and multiplying by $\rho$, we conclude that
  $\rho^2g(\xi,\ze_0)=d\rho(\xi)(C+\rho h(\ze_0,\ze_0))$. In
  particular, we see that we can prove that
  $d\rho(\hat\nabla_\xi\eta)$ admits a smooth extension to all of $U$
  by showing that $\rho^2g(\hat\nabla_\xi\eta,\ze_0)$ admits such an
  extension.

  On the other hand, assume that $\ze\in\frak X(U)$ has the property
  that $d\rho(\ze)$ and $\th(\ze)$ vanish identically. Then inserting
  into the defining equation for $h$ we see that for each $\xi\in\frak
  X(U)$, we get $\rho g(\xi,\ze)=h(\xi,\ze)$. Hence we can show that
  for each such $\ze$, $\rho g(\hat\nabla_\xi\eta,\ze)$ admits a
  smooth extension to all of $U$ in order to verify the second claimed
  smoothness property. 

  As in the projective case, we use a modification of the
  Koszul--formula for the Levi--Civita connection in order to prove
  existence of smooth extensions. Since we are dealing with a metric
  connection with torsion here, the Koszul formula becomes a bit more
  complicated. Recall that to prove the Koszul formula, one uses
  torsion freeness of the Levi--Civita connection, which brings in the
  terms involving Lie brackets. In the presence of torsion, one simply
  has to add, for each Lie bracket term, a term involving the torsion
  with the same configuration of arguments. Taking into account that
  the torsion of a minimal complex connection equals $-\frac14\Cal N$,
  we conclude that for the canonical connection $\nabla$, we get  
\begin{align*}
  2g(\nabla_\xi\eta,\zeta)&=\xi\cdot g(\eta,\zeta)-\zeta\cdot
  g(\xi,\eta)+\eta\cdot
  g(\xi,\zeta)\\
  &+g([\xi,\eta],\zeta)-g([\xi,\zeta],\eta)-g([\eta,\zeta],\xi)\\
  &-\tfrac14g(\Cal N(\xi,\eta),\zeta)+\tfrac14g(\Cal
  N(\xi,\zeta),\eta)+\tfrac14g(\Cal
  N(\eta,\zeta),\xi).
\end{align*}
To compute  $2g(\hat\nabla_\xi\eta,\zeta)$ we have to add to this the
expression 
\begin{equation}\label{addon}
  \tfrac{d\rho(\xi)}{\rho}g(\eta,\zeta)+
  \tfrac{d\rho(\eta)}{\rho}g(\xi,\zeta)+
  \tfrac{\th(\xi)}{\rho}g(J\eta,\zeta)+
  \tfrac{\th(\eta)}{\rho}g(J\xi,\zeta)
\end{equation}
In the first step, we use this formula to compute
$2g(\hat\nabla_\xi\eta,\ze_0)$ for $\xi,\eta\in\frak X(U)$, and we
ignore terms which admit a smooth extension to the boundary after
multiplication by $\rho^2$. In particular, this applies to all terms
in the second and third line of the Koszul formula. Now from above we
know that $g(\eta,\ze_0)=\tfrac{d\rho(\eta)}{\rho^2}(C+\rho
h(\ze_0,\ze_0))$, and, even if we differentiate once more or multiply
by $\tfrac1\rho$, the second summand will admit a smooth extension
after multiplication by $\rho^2$. Hence in the computations, we may
replace $g(\eta,\ze_0)$ by $C\tfrac{d\rho(\eta)}{\rho^2}$ and
$g(J\eta,\ze_0)$ by $-C\tfrac{\th(\eta)}{\rho^2}$ and likewise for
$\xi$. In particular, the contributions from \eqref{addon} simply add
up to
$$
2C\tfrac1{\rho^3}\big(d\rho(\xi)d\rho(\eta)-\th(\xi)\th(\eta)\big). 
$$
On the other hand, in the terms from the first line of the Koszul
formula, we only have to take into account those parts in which a
vector field differentiates the factor $\tfrac{1}{\rho^2}$. This
immediately shows that the first and third terms each contribute
$-2C\tfrac1{\rho^3}d\rho(\xi)d\rho(\eta)$, while from the second term
we get
$$
+2C\tfrac1{\rho^3}(d\rho(\xi)d\rho(\eta)+\th(\xi)\th(\eta)).
$$ 
This completes the proof that $\rho^2g(\hat\nabla_\xi\eta,\ze_0)$
admits a smooth extension to the boundary.

For the second part, we take $\ze\in\frak X(U)$ such that $d\rho(\ze)$
and $\th(\ze)$ vanish identically and we can ignore terms which admit
a smooth extension to all of $U$ after multiplication by $\rho$. Then
$g(\eta,\zeta)=\tfrac1\rho h(\eta,\ze)$ and likewise for $\xi$, so the
contribution of the terms from \eqref{addon} reads as
\begin{equation}
  \label{add-contrib}
  \tfrac{1}{\rho^2}\big(d\rho(\xi)h(\eta,\zeta)+d\rho(\eta)h(\xi,\zeta)+
\th(\xi)h(J\eta,\zeta)+\th(\eta)h(J\xi,\zeta)\big).
\end{equation}
The first two summands in \eqref{add-contrib} are immediately seen to
cancel with the contributions coming from the first and third summand
in the right hand side of the Koszul formula. The term
$g([\xi,\eta],\ze)$ clearly admits a smooth extension to the boundary
after multiplication by $\rho$ and the same holds for all terms in the
Koszul formula which involve $\Cal N$ by the assumption on
asymptotically tangential values. Hence we are left with determining the
contribution of
$$
-\ze\cdot g(\xi,\eta)-g([\xi,\ze],\eta)-g([\eta,\ze],\xi). 
$$
By assumption $\ze\cdot\rho=0$, so in all terms we only need the
$\tfrac1{\rho^2}$--terms in the formula for $g$. Now $d\rho(\ze)=0$
implies $-\ze\cdot d\rho(\xi)-d\rho([\xi,\ze])=dd\rho(\xi,\zeta)=0$, and
since $\th(\ze)=0$, we similarly get
$-\ze\cdot\th(\xi)-\th([\xi,\ze])=d\th(\xi,\ze)$. This shows that the
total contribution from our three remaining terms is given by 
$$
C\tfrac1{\rho^2}\big(d\th(\xi,\zeta)\th(\eta)+d\th(\eta,\zeta)\th(\xi)\big).
$$ 
By our assumption on the relation between $h$ and $d\th$, this cancels
with the remaining two summands from \eqref{add-contrib}, which
completes the proof. 
\end{proof}

\subsection{A class of examples}\label{2.7}
Using Theorem \ref{thm2.6} we can now construct a class of examples of
c--projectively compact metrics which includes the classical examples
of complete K\"ahler metrics associated to smoothly bounded domains in
$\Bbb C^n$. 

Consider an almost complex manifold $(\Cal M,J)$ and a domain
$U\subset\Cal M$ with smooth boundary $\partial U$. Then $\partial U$
inherits an almost CR structure (see \ref{2.2}) and we assume that
this structure is Levi--non--degenerate. Choose a defining function
$\rho$ for $\partial U$ defined on a neighborhood of $\bar U$ in $\Cal
M$. Assuming that $\rho>0$ on $U$ the function $\log\rho:U\to\Bbb R$
is smooth. Now putting $\om=d(-d\log\rho\o J)\in\Om^2(U)$, we define
$g_\rho\in\Ga(S^2T^*U)$ by $g_{\rho}(\xi,\eta):=\om(\xi,J\eta)$ for
$\xi,\eta\in\frak X(U)$.

\begin{prop}\label{prop2.7}
  For a domain $U$ in an almost complex manifold $(\Cal M,J)$ with
  smooth, Levi non--degenerate boundary $\partial U$ consider the
  section $g_\rho$ associated to a defining function $\rho$ for
  $\partial U$ which is positive on $U$. 

  (1) There is an open neighborhood $V$ of $\partial U$ in $\bar U$
  such that $g_\rho$ defines a smooth pseudo--Riemannian metric on
  $V\cap U$. 

  (2) If $d\rho(\Cal N(\xi,\eta))=0$ for all $\xi,\eta\in\frak X(U)$,
  then $g_\rho$ is Hermitean with respect to $J$.

  (3) If in the setting of (2) $g_\rho$ is admissible, then it is
  c--projectively compact. This is always the case if $J$ is
  integrable.
\end{prop}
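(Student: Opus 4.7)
The plan is to compute $g_\rho$ explicitly in terms of $\rho$ and $\theta:=-d\rho\circ J$, and to recognize it as an instance of the asymptotic form \eqref{asymp-form} of Theorem \ref{thm2.6} with $C=-1$. I would start from $-d\log\rho\circ J = \theta/\rho$, so that
$$
\omega \;=\; d(\theta/\rho) \;=\; \tfrac{1}{\rho}\,d\theta \;-\; \tfrac{1}{\rho^{2}}\,d\rho\wedge\theta.
$$
Using $d\rho(J\eta)=-\theta(\eta)$ and $\theta(J\eta)=d\rho(\eta)$, a direct expansion gives
$$
g_\rho(\xi,\eta) \;=\; -\tfrac{1}{\rho^{2}}\bigl(d\rho(\xi)d\rho(\eta)+\theta(\xi)\theta(\eta)\bigr) \;+\; \tfrac{1}{\rho}\,d\theta(\xi,J\eta),
$$
which matches \eqref{asymp-form} with $C=-1$ and $h(\xi,\eta):=d\theta(\xi,J\eta)$; in particular, the tensor $h_{\rho,-1}$ of Theorem \ref{thm2.6} equals $d\theta(\cdot,J\cdot)$ identically, which is smooth on all of $U$.

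For (1), smoothness of $g_\rho$ on $U$ is immediate since $\rho>0$ there. Non-degeneracy on a sufficiently small neighborhood $V$ of $\partial U$ follows from the formula above: the $\rho^{-2}$ part pairs the duals of $d\rho$ and $\theta$ non-degenerately, while on the complementary subbundle $H=\ker d\rho\cap\ker\theta$ the metric is dominated by $\rho^{-1}d\theta(\cdot,J\cdot)$, which is non-degenerate at the boundary by Levi non-degeneracy of $\partial U$, hence non-degenerate near $\partial U$ by continuity. For (2), I would invoke the pointwise identity established in the proof of Lemma \ref{lem2.2a},
$$
d\theta(J\xi,\eta)+d\theta(\xi,J\eta) \;=\; d\rho(\Cal N(\xi,\eta)),
$$
which holds throughout $U$ and not merely along the boundary. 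The hypothesis $d\rho\circ\Cal N=0$ then gives $d\theta(J\xi,J\eta)=d\theta(\xi,\eta)$ after substituting $\eta\mapsto J\eta$, so $d\theta$ is $J$-invariant on $U$. The form $d\rho\wedge\theta$ is automatically $J$-invariant (direct check using $d\rho(J\xi)=-\theta(\xi)$ and $\theta(J\xi)=d\rho(\xi)$), so $\omega$ is $J$-invariant; this forces $g_\rho(\xi,\eta)=\omega(\xi,J\eta)$ to be symmetric in $\xi,\eta$ and to satisfy $g_\rho(J\xi,J\eta)=g_\rho(\xi,\eta)$.

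For (3), I would apply Theorem \ref{thm2.6}: Levi non-degeneracy of $\partial U$ is given; $\Cal N$ has tangential values along (in fact throughout a neighborhood of) $\partial U$ by the hypothesis of (2); $g_\rho$ is admissible by assumption; and the explicit formula for $g_\rho$ just derived realises the asymptotic form \eqref{asymp-form}, with $h_{\rho,-1}=d\theta(\cdot,J\cdot)$ smooth on all of $U$ and with $h_{\rho,-1}(\xi,J\zeta)=-d\theta(\xi,\zeta)=C\,d\theta(\xi,\zeta)$ whenever $d\rho(\zeta)=\theta(\zeta)=0$. So both bullets of Theorem \ref{thm2.6} hold and $g_\rho$ is c--projectively compact. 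When $J$ is integrable, $\Cal N=0$ makes the hypothesis of (2) automatic; moreover $\omega$ is exact hence closed, and being the fundamental two-form of the Hermitean metric $g_\rho$ this makes $g_\rho$ (pseudo--)K\"ahler on $V$, hence admissible by Proposition \ref{prop2.5}. The hardest step will be the Hermitean check in (2): it hinges on the fact that the identity in Lemma \ref{lem2.2a} is pointwise, so that the interior condition $d\rho\circ\Cal N=0$ propagates to $J$-invariance of $d\theta$ throughout $U$ rather than only along $\partial U$; the remainder of the argument is direct bookkeeping once the explicit form of $g_\rho$ is in hand.
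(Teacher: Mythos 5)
Your proposal is correct and follows essentially the same route as the paper: derive the explicit formula $g_\rho(\xi,\eta)=-\tfrac{1}{\rho^2}\bigl(d\rho(\xi)d\rho(\eta)+\th(\xi)\th(\eta)\bigr)+\tfrac1\rho d\th(\xi,J\eta)$, get Hermiticity from the pointwise identity $d\th(J\xi,\eta)+d\th(\xi,J\eta)=d\rho(\Cal N(\xi,\eta))$ in the proof of Lemma \ref{lem2.2a}, and recognize the asymptotic form of Theorem \ref{thm2.6} with $C=-1$ and $h=d\th(\cdot,J\cdot)$, using closedness of the exact form $\om$ plus Proposition \ref{prop2.5} for admissibility in the integrable case. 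Your expansion of the Hermitean step (via $J$--invariance of $\om$) is just a more detailed version of what the paper leaves implicit.
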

\begin{proof}
  Let us use the notation from \ref{2.2a} and put $\th=-d\rho\o
  J$. Since $d\log\rho=\tfrac{d\rho}{\rho}$ our definition implies that
  $\om=d\tfrac1\rho \th=-\tfrac1{\rho^2}d\rho\wedge\th+\tfrac{1}\rho
  d\th$. The definition of $g_\rho$ then directly implies that 
  \begin{equation}
    \label{grho-form}
    g_\rho (\xi,\eta)=\tfrac{-1}{\rho^2}\big(d\rho(\xi)d\rho(\eta)+
\th(\xi)\th(\eta)\big)+\tfrac1\rho d\th(\xi,J\eta). 
  \end{equation}
  Since we have assumed that $\partial U$ is Levi--non--degenerate,
  $(\xi,\eta)\mapsto d\th(\xi,J\eta)$ is non--degenerate on
  $\ker(d\rho)\cap\ker(\th)$ along the boundary. Hence there is a
  neighborhood $V$ of $\partial U$ in $\overline{U}$ such that $d\th$
  is non--degenerate on $\ker(d\rho)\cap\ker(\th)$ on all of $V$. This
  readily implies that $g_\rho$ is non--degenerate on $V\cap U$ so
  (1) is proved.

  (2) The proof of Lemma \ref{lem2.2a} shows that $d\th$ is Hermitean
  on $U$, and then formula \eqref{grho-form} immediately implies that
  $g_\rho$ is Hermitean on $U$, too. 

  (3) In the setting of (2), $\om$ is (up to sign) the fundamental
  two--form associated to the Hermitean metric $g_\rho$. But by
  construction $\om$ is exact and thus closed. In the integrable case,
  this implies that $g_\rho$ is K\"ahler and hence admissible by
  Proposition \ref{prop2.5}. Knowing that $g_\rho$ is admissible, we
  can multiply formula \eqref{grho-form} by $\rho$ and bring the first
  term on the right hand side to the left hand side to see that the
  assumptions of Theorem \ref{thm2.6} are satisfied with $C=-1$ and
  $h(\xi,\eta)=d\th(\xi,J\eta)$.
\end{proof}

\section{C--projective tractors}\label{3}
To prove general results on the existence of asymptotic forms, as in
Theorem \ref{thm2.6}, we use the c--projective version of tractor
calculus. A particularly nice instance of this situation is provided by
reductions of c--projective holonomy to a unitary group, which, in the
integrable case, are related to K\"ahler--Einstein metrics. 

\subsection{The c--projective Schouten tensor}\label{3.1}
As an important ingredient for tractor calculus, we first discuss the
c--projective version of the Schouten tensor (or
Rho--tensor). Similarly to the well known cases of conformal and
projective structures, this is a curvature quantity which is closely
related to the Ricci curvature, but has nicer transformation laws
under a c--projective change. It will be very important for us to work
in a purely real picture (i.e.~without complexifying the tangent
bundle). We will also formulate the results mainly in terms of abstract
indices. 

Let $(N,J)$ be an almost complex manifold, let $\nabla$ be a complex
linear connection on $TN$ and let $R$ be its curvature tensor. In
abstract index notation, the almost complex structure is denote by
$J^a_b$ via $J(\xi)^a=J^a_i\xi^i$, while we write the curvature tensor
as $R_{ab}{}^c{}_d$ via
$R(\xi,\eta)(\ze)^c=R_{ij}{}^c{}_k\xi^i\eta^j\ze^k$. From the
construction it follows readily that $R$ has values in complex linear
maps, i.e.~$R_{ab}{}^c{}_iJ^i_d=R_{ab}{}^i{}_dJ^c_i$. The
\textit{Ricci tensor} $\Ric=\Ric_{ab}$ of $\nabla$ is then defined as
usual by $\Ric_{ab}:=R_{ia}{}^i{}_b$. Observe that $\Ric$ is not
symmetric in general. Finally, the \textit{c--projective Schouten
  tensor} $\Rho=\Rho_{ab}$ of $\nabla$ is defined by 
\begin{equation}
  \label{Rhodef}
  \Rho_{ab}:=\tfrac1{2(m+1)}\left(\Ric_{ab}+
\tfrac1{m-1}\big(\Ric_{(ab)}-J_a^iJ_b^j\Ric_{(ij)}\big)\right),
\end{equation}
where the real dimension of $N$ is $2m$. Observe that this definition
immediately implies that $\Rho$ is symmetric provided that $\Ric$ is
symmetric. Moreover, if $\Ric$ is Hermitean,
i.e.~$\Ric_{ab}=J^i_aJ^j_b\Ric_{ij}$, then $\Rho$ is just a constant
multiple of $\Ric$.

Now we can describe a characterization of the Schouten tensor as well
as its transformation law under a c--projective change of connection. 
\begin{prop}\label{prop3.1}
  (1) The Schouten tensor is uniquely characterized by the property
  that defining $W_{ab}{}^c{}_d$ as 
$$
R_{ab}{}^c{}_d-2\big(\de^c_{[a}\Rho_{b]d}-\Rho_{[ab]}\de^c_d-J^i_{[a}\Rho_{b]i}J^c_d
-J^c_{[a}\Rho_{b]i}J^i_d\big),  
$$
  one has $W_{ia}{}^i{}_b=0$. 

  (2) If $\hat\nabla$ is a connection which is c--projectively related
  to $\nabla$ via $\hat\nabla=\nabla+\Up$ as in formula \eqref{cpdef}
  for a one--form $\Up=\Up_a$, then the Schouten tensor
  $\widehat{\Rho}$ of $\hat\nabla$ is given by
\begin{equation}
  \label{Rhotransf}
  \widehat{\Rho}_{ab}=\Rho_{ab}-\nabla_a\Up_b+\Up_a\Up_b-J^i_aJ^j_b\Up_i\Up_j. 
\end{equation}
\end{prop}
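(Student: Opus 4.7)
For part (1), my plan is a direct contraction followed by an irreducibility argument. I would compute $W_{ib}{}^i{}_d$ by expanding the four correction terms separately, using only the identities $J^i_i=0$ (the trace of an almost complex structure vanishes) and $J^i_jJ^j_k=-\delta^i_k$, together with $\dim_{\Bbb R}N=2m$. After the routine contraction the four pieces collapse to
\begin{equation*}
W_{ib}{}^i{}_d \;=\; \Ric_{bd} - (2m+1)\Rho_{bd} + \Rho_{db} - 2\,J^i_bJ^j_d\Rho_{(ij)}.
\end{equation*}
To obtain both uniqueness and the explicit formula, I decompose $\binom{0}{2}$-tensors under the two commuting involutions $\Rho_{ab}\mapsto\Rho_{ba}$ and $\Rho_{ab}\mapsto J^i_aJ^j_b\Rho_{ij}$ into three irreducible summands: antisymmetric, Hermitean symmetric (fixed by $J\otimes J$), and anti-Hermitean symmetric. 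On these three subspaces the linear map $\Rho\mapsto\Ric$ determined by $W_{ib}{}^i{}_d=0$ is block-diagonal, acting as multiplication by $2(m+1)$, $2(m+1)$, and $2(m-1)$, respectively. For $m\geq 2$ all three scalars are non-zero, proving uniqueness; inverting block-by-block and recombining recovers formula \eqref{Rhodef}.

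For part (2), my plan is to compute $\hat R-R$ directly and then invoke part (1). Since $\nabla$ and $\hat\nabla$ both preserve $J$ and are minimal, they share the same torsion, so their difference is a $\binom{1}{2}$-tensor $T_a{}^b{}_c$ symmetric in $(a,c)$; from \eqref{cpdef} I read
\begin{equation*}
T_a{}^b{}_c \;=\; \Up_a\delta^b_c + \delta^b_a\Up_c - J^i_aJ^b_c\Up_i - J^b_aJ^i_c\Up_i.
\end{equation*}
The standard change-of-curvature identity for connections with equal torsion gives
\begin{equation*}
\hat R_{ab}{}^c{}_d - R_{ab}{}^c{}_d \;=\; 2\nabla_{[a}T_{b]}{}^c{}_d + 2\,T_{[a}{}^c{}_{\,i}T_{b]}{}^i{}_d.
\end{equation*}
By part (1), $\hat\Rho$ is uniquely characterized by $\hat R$. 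Hence it suffices to trace this identity over $c=b$, substitute into formula \eqref{Rhodef} applied to $\hat\Ric$, and match against $\Rho$ plus the claimed correction, block-by-block on the three irreducible pieces identified in (1).

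The main obstacle will be the combinatorial bookkeeping in expanding $\nabla_{[a}T_{b]}{}^c{}_d$ and $T_{[a}{}^c{}_iT_{b]}{}^i{}_d$, each of which produces many terms. Three systematic simplifications will control the computation: first, $\nabla J=0$ lets factors of $J$ commute through covariant derivatives, reducing all derivative terms to linear combinations of $\nabla_a\Up_b$ and $J^i_aJ^j_b\nabla_i\Up_j$; second, the identities $J^i_i=0$ and $J^i_jJ^j_k=-\delta^i_k$ collapse all traces; third, the decomposition from (1) is preserved, so one may work on each irreducible piece separately. The quadratic $\Up$-terms naturally group as $\Up_a\Up_b-J^i_aJ^j_b\Up_i\Up_j$, i.e.\ twice the anti-Hermitean symmetric part of $\Up\otimes\Up$, and feed exactly into the $2(m-1)$-block; the antisymmetric part of $\hat\Rho-\Rho$ reduces cleanly to $-\nabla_{[a}\Up_{b]}$; and the Hermitean symmetric part picks up only the Hermitean part of $-\nabla_{(a}\Up_{b)}$. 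Reassembling the three blocks then reproduces the closed form \eqref{Rhotransf}.
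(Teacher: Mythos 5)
Your part (1) is correct, and it is precisely the ``slightly lengthy but straightforward computation'' that the paper declines to write out (its proof consists of that remark plus a pointer to Section 2.4 of \cite{CEMN}). I have checked your contraction: the correction terms do trace to $(2m+1)\Rho_{bd}-\Rho_{db}+2J^i_bJ^j_d\Rho_{(ij)}$, the induced linear map acts by $2(m+1)$, $2(m+1)$ and $2(m-1)$ on the antisymmetric, Hermitean symmetric and anti--Hermitean symmetric blocks, and since $m\geq 2$ the block--by--block inverse reproduces \eqref{Rhodef}. The quadratic bookkeeping in part (2) also comes out as you predict: the trace of $2T_{[a}{}^c{}_{|i|}T_{b]}{}^i{}_d$ equals $\big((4m+4)-(2m+6)\big)\big(\Up_b\Up_d-J^i_bJ^j_d\Up_i\Up_j\big)$, which lands in the $2(m-1)$--block and yields exactly the quadratic terms of \eqref{Rhotransf}.

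The one genuine flaw is the curvature--change identity you invoke in part (2). For a connection $\nabla$ with torsion $\operatorname{Tor}$ and a symmetric difference tensor $T$, the correct identity is
$$
\hat R_{ab}{}^c{}_d-R_{ab}{}^c{}_d
=2\nabla_{[a}T_{b]}{}^c{}_d+2T_{[a}{}^c{}_{|i|}T_{b]}{}^i{}_d
+\operatorname{Tor}_{ab}{}^i\,T_i{}^c{}_d,
$$
and the last term is missing from your formula. The proposition is stated for almost c--projective structures, where the connections are minimal but not torsion free: their common torsion is $-\tfrac14\Cal N$, so the identity you call ``standard'' holds only when $J$ is integrable. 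As it happens, the omission does not corrupt the final answer: the contribution of the extra term to $\hat\Ric_{bd}-\Ric_{bd}$ is $\operatorname{Tor}_{ab}{}^i\,T_i{}^a{}_d$, and conjugate linearity of the torsion in both arguments forces this to vanish --- the piece $\Up_i\operatorname{Tor}_{db}{}^i$ cancels against the piece produced by $-J^j_iJ^a_d\Up_j$, while the remaining two pieces involve the traces $\operatorname{Tor}_{ib}{}^i$ and $J^a_i\operatorname{Tor}_{ab}{}^i$, both of which are identically zero for a $(0,2)$--type torsion. You need to either add this term and verify the cancellation, or explicitly restrict the argument at this step to torsion--free connections; as written, the derivation does not cover the non--integrable case for which the statement is made.
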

\begin{proof}
  Both properties can be established via slightly lengthy but
  straightforward computations. Alternatively, they can be deduced
  from the formulae in Section 2.4 of \cite{CEMN} taking into account
  the slightly different conventions we use.
\end{proof}

\subsection{The tractor bundle of Hermitian forms}\label{3.2} 
Recall from section \ref{2.1} that almost c--projective structures of
real dimension $n=2m$ with a chosen root $\Cal E(1,0)$ of the
canonical bundle are equivalent to regular normal Cartan geometries of
type $(G,P)$. Here $G=SL(n+1,\Bbb C)$ and $P\subset G$ is the
stabilizer of a line in the standard representation $\Bbb C^{n+1}$ of
$G$. We have also noted there that the Cartan geometry can be
equivalently encoded as the standard tractor bundle $\Cal T$, which is
the associated bundle with respect to the restriction of the
representation $\Bbb C^{n+1}$ to $P$ and the standard tractor
connection $\nabla^{\Cal T}$, the linear connection on $\Cal T$
induced by the Cartan connection.

We will mainly be interested in the bundle $\Cal H$ of Hermitian forms
on $\Cal T$ and in its dual $\Cal H^*$, which can be interpreted as
Hermitean forms on $\Cal T^*$. The bundle $\Cal H$ is a real subbundle
of the tensor product of the complex dual of $\Cal T$ and its
conjugate dual. Since the fact that we deal with sections of the real
bundle is of crucial importance for us, we will avoid working with
complexifications. In the real picture, the simplest way to view
Hermitian forms is as symmetric real valued bilinear forms for which
the almost complex structure is orthogonal (and hence also skew
symmetric).

Our first task is to deduce the composition structure of $\Cal H$
induced by the canonical composition structure of $\Cal T$. The line
in $\Bbb C^{n+1}$ stabilized by $P$ gives rise to a complex line
subbundle $\Cal T^1\subset\Cal T$. For a almost c--projective structure on an
almost complex manifold $(N,J)$ it turns out that $\Cal T^1$ can be
naturally identified with the density bundle $\Cal E(-1,0)$, while
$\Cal T/\Cal T^1\cong TN(-1,0)$, and we summarize this composition
structure as $\Cal T\cong \Cal E(-1,0)\rpl TN(-1,0)$.

Now since $\Cal E(-1,0)$ is a complex line bundle, the bundle of
Hermitian bilinear forms on $\Cal E(-1,0)$ is a real line bundle,
which naturally includes into $\Cal E(1,1)$, so this is the real
density bundle $\Cal E(2)$. Given a Hermitian bilinear form on $\Cal
T$, we can restrict it to $\Cal T^1\x\Cal T^1$, thus defining a
surjective bundle map $\Cal H\to\Cal E(2)$, whose kernel defines a
subbundle $\Cal H^1\subset\Cal H$ of real corank one. Taking an
element of $\Cal H^1$ and restricting to $\Cal T\x\Cal T^1$, the
result descends to a bilinear form on $(\Cal T/\Cal T^1)\x\Cal T^1$,
and we define $\Cal H^2\subset\Cal H^1$ to be the subbundle of those
elements for which also this induced form vanishes. Thus we have
obtained a filtration $\Cal H\supset\Cal H^1\supset\Cal H^2$ of the
bundle $\Cal H$. Now we can describe the composition structure of
$\Cal H$.

\begin{prop}\label{prop3.2}
  The composition structure for the canonical filtration $\Cal
  H\supset\Cal H^1\supset\Cal H^2$ of $\Cal H$ has the form
$$
\Cal H\cong \Herm(TN)\otimes\Cal E(2) \rpl T^*N\otimes\Cal E(2)\rpl\Cal
E(2),
$$ where all tensor products are over $\Bbb R$ and $\Herm(TN)$ denotes
the bundle of (real valued) Hermitian forms on the complex vector
bundle $TN$.
\end{prop}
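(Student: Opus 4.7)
The strategy is to identify the three associated--graded pieces $\Cal H/\Cal H^1$, $\Cal H^1/\Cal H^2$, and $\Cal H^2$ of the given filtration with the three summands on the right hand side, by direct inspection of what the definitions impose on Hermitian forms.

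For the top quotient, restriction of $h\in\Cal H$ to $\Cal T^1\x\Cal T^1$ is by construction a Hermitian form on the complex line bundle $\Cal T^1\cong\Cal E(-1,0)$; Hermitian forms on a complex line $L$ form the real subbundle of $L^*\otimes_{\Bbb C}\overline{L^*}$ cut out by the reality condition, and for $L=\Cal E(-1,0)$ we have $L^*\otimes_{\Bbb C}\overline{L^*}=\Cal E(1,1)$ with real form $\Cal E(2)$. Hence $\Cal H/\Cal H^1\cong\Cal E(2)$. For the deepest piece, an element of $\Cal H^2$ vanishes on $\Cal T^1\x\Cal T$ by Hermitian symmetry and therefore descends to a Hermitian form on the quotient $\Cal T/\Cal T^1\cong TN(-1,0)=TN\otimes_{\Bbb C}\Cal E(-1,0)$. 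Applying the natural isomorphism $\Herm(V\otimes_{\Bbb C}L)\cong\Herm(V)\otimes_{\Bbb R}\Herm(L)$ for a complex bundle $V$ and a complex line bundle $L$, together with $\Herm(\Cal E(-1,0))=\Cal E(2)$, yields $\Cal H^2\cong\Herm(TN)\otimes\Cal E(2)$.

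For the middle quotient, the map $h\mapsto h|_{\Cal T\x\Cal T^1}$ realizes $\Cal H^1/\Cal H^2$ as the bundle of sesquilinear forms $\ph\colon(\Cal T/\Cal T^1)\x\Cal T^1\to\Bbb C$ that are $\Bbb C$--linear in the first slot and conjugate linear in the second; Hermitian symmetry of $h$ reconstructs $h|_{\Cal T^1\x\Cal T}$ from $\ph$ and imposes no extra reality constraint, so every such $\ph$ arises from a unique class in $\Cal H^1/\Cal H^2$. Such $\ph$ form the complex bundle $(\Cal T/\Cal T^1)^*\otimes_{\Bbb C}\overline{(\Cal T^1)^*}=T^*N(1,0)\otimes_{\Bbb C}\Cal E(0,1)=T^*N\otimes_{\Bbb C}\Cal E(1,1)$, and the natural $\Bbb R$--linear isomorphism $T^*N\otimes_{\Bbb C}\Cal E(1,1)\cong T^*N\otimes_{\Bbb R}\Cal E(2)$---which follows from $\Cal E(1,1)\cong\Cal E(2)\otimes_{\Bbb R}\Bbb C$ as complex bundles, with the extra $\Bbb C$--factor absorbing the complex structure $J^*$ on $T^*N$---completes the identification.

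The main obstacle I anticipate is the bookkeeping of complex versus real structures in the middle piece: one must keep track of which tensor product is over $\Bbb R$ and which over $\Bbb C$, of how conjugate bundles interact with the density weights, and of how the action of $J^*$ on $T^*N$ corresponds to multiplication by $i$ on sections of $\Cal E(1,1)$. Once these identifications are pinned down, each claim reduces to a fibrewise linear algebra computation, and a dimension count ($\dim_{\Bbb R}\Cal H=(m+1)^2=m^2+2m+1$, matching the sum $m^2+2m+1$ of the real ranks of the three summands) serves as a useful sanity check.
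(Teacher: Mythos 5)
Your proposal is correct and follows essentially the same route as the paper: identify the three graded pieces of the filtration $\Cal H\supset\Cal H^1\supset\Cal H^2$ by restricting Hermitian forms to $\Cal T^1$ and passing to $\Cal T/\Cal T^1$, and then settle each identification by fibrewise linear algebra, with the dimension count $(m+1)^2=m^2+2m+1$ as confirmation. The only difference is one of execution: the paper stays in the purely real picture and builds the isomorphisms by explicit formulas such as $(v,a)\mapsto\ph(v)(a,\_)-\ph(iv)(ia,\_)$, checking injectivity directly, whereas you invoke the standard complexified tensor--product decompositions (e.g. $\Herm(V\otimes_{\Bbb C}L)\cong\Herm(V)\otimes_{\Bbb R}\Herm(L)$ and $\Cal E(1,1)\cong\Cal E(2)\otimes_{\Bbb R}\Bbb C$) together with the observation that the off--diagonal block carries no reality constraint; both amount to the same computation.
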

\begin{proof}
  We have already seen that $\Cal H/\Cal H^1\cong\Cal E(2)$ and that
  $\Cal H^1/\Cal H^2$ and $\Cal H^2$ can be identified with the space
  of Hermitian bilinear forms on $\Cal T/\Cal T^1\x\Cal T^1$ and on
  $\Cal T/\Cal T^1\x\Cal T/\Cal T^1$, respectively. Identifying these
  spaces of Hermitean forms is a purely pointwise questions of linear
  algebra.

  Thus let us assume that $V$ and $\ell$ are complex vector spaces of
  complex dimension $m$ and $1$, respectively. To complete the proof
  we have to describe Hermitian bilinear forms on $(V\otimes_{\Bbb
    C}\ell)\x\ell$ and $(V\otimes_{\Bbb C}\ell)\x (V\otimes_{\Bbb
    C}\ell)$ in terms of the space $H_\ell$ of Hermitian forms on
  $\ell$.

  Observe first that the complex tensor product $V\otimes_{\Bbb
    C}\ell$ is the quotient of the real tensor product $V\otimes\ell$
  by the linear subspace spanned by all elements of the form $v\otimes
  a+iv\otimes ia$ for $v\in V$ and $a\in\ell$. Given a (real) linear
  map $\ph:V\to H_\ell$ we consider the map $V\x \ell\to L(\ell,\Bbb
  R)$ mapping $(v,a)$ to $\ph(v)(a,\_)-\ph(iv)(ia,\_)$. This is
  evidently bilinear over the reals, so it factorizes to
  $V\otimes\ell$ and then kills all elements of the form $v\otimes
  a+iv\otimes ia$, so it induces a bilinear map $(V\otimes_{\Bbb
    C}\ell)\x\ell\to\Bbb R$. Now the complex structure on
  $V\otimes_{\Bbb C}\ell$ is given by multiplication by $i$ on either
  component. So to verify that our map is indeed Hermitian, it
  suffices to compare the values of the corresponding trilinear map
  $V\x\ell\x\ell\to\Bbb R$ on $(v,a,b)$ and $(v,ia,ib)$. But for the
  latter element we get
$$
\ph(v)(ia,ib)-\ph(iv)(-a,ib)=\ph(v)(a,b)-\ph(iv)(ia,b). 
$$ 
Thus we have constructed a linear map from $L_{\Bbb R}(V,H_\ell)$ to
the space of Hermitian bilinear forms on $(V\otimes_{\Bbb
  C}\ell)\x\ell$. If $\ph$ is in the kernel of this map, we in
particular get $0=\ph(v)(a,a)-\ph(iv)(ia,a)$ for all $v\in V$ and
$a\in\ell$. But any element of $H_\ell$ vanishes on $(ia,a)$ and
non--zero elements are non--zero on $(a,a)$ provided that $a\neq
0$. Hence we see that $\ph(v)=0$ for all $v\in V$. Thus our map is
injective and hence a linear isomorphism by a dimension count.

Next assume that $\ps:V\x V\to H_\ell$ is symmetric and bilinear (over
$\Bbb R$) and satisfies $\ps(iv,iw)=\ps(v,w)$ for all $v,w\in V$. Then
similar to the above we can consider the $4$--linear map $V\x \ell\x
V\x\ell$ defined by 
$$
(v,a,w,b)\mapsto \ps(v,w)(a,b)-\ps(iv,w)(ia,b).
$$
One verifies step by step that this map induces a Hermitean
bilinear map $(V\otimes_{\Bbb C}\ell)\x (V\otimes_{\Bbb C}\ell)\to\Bbb
R$ and that this gives rise to the required isomorphism.
\end{proof}

Using this result, we can now give a description of $\Cal H$ in a
purely real picture. We start by discussing the explicit description
of $\Cal T$ following \cite{CEMN} and the general results in chapter 5
of \cite{book}.

A choice of a connection $\nabla$ in the c--projective class gives
rise to a splitting $\Cal T\cong TN\otimes_{\Bbb C}\Cal
E(-1,0)\oplus\Cal E(-1,0)$ of the canonical filtration. We will write
elements of this bundle in such a splitting as vectors of the form
$\binom{\xi\otimes_{\Bbb C}\rho}{\nu}$ with $\nu$, $\rho$ in $\Cal
E(-1,0)$ and $\xi\in TM$. In this picture, the behavior under a change
of connection is easy to describe. Changing from $\nabla$ to
$\hat\nabla=\nabla+\Up$ as in \ref{2.1}, the change is given by
\begin{equation}\label{eq:t-trans}
\widehat{\begin{pmatrix} \xi\otimes_{\Bbb C}\rho \\ \nu\end{pmatrix}}
=\begin{pmatrix} \xi\otimes_{\Bbb C}\rho
\\ \nu-\Up(\xi)\rho+\Up(J\xi)i\rho \end{pmatrix}.
\end{equation}
Here $\Up$ is viewed as real valued and as in the proof of Proposition
\ref{prop3.2} one checks that this expression really makes sense for a
tensor product over $\Bbb C$. 

Now in view of Proposition \ref{prop3.2}, having chosen a connection
$\nabla$ in the c--projective class, we represent elements of $\Cal H$
as triples (or vectors) of the form $(\tau,\ph,\ps)$ were $\tau$ is an
element of the space $\Cal E(2)$ of Hermitean forms on $\Cal E(-1,0)$,
$\ph$ is a linear map $TN\to\Cal E(2)$, and $\ps$ is
a Hermitean form $TN\x TN\to\Cal E(2)$. The action of such a
triple is then defined as 
\begin{equation}\label{eq:pair-def}
\begin{aligned}
\begin{pmatrix}\tau \\ \ph\\ \ps 
\end{pmatrix}&\left(\begin{pmatrix}\xi_1\otimes_{\Bbb C}\rho_1
  \\ \nu_1\end{pmatrix}, \begin{pmatrix}\xi_2\otimes_{\Bbb C}\rho_2
  \\ \nu_2\end{pmatrix}\right):= \\
\tau(\nu_1,\nu_2)+&\ph(\xi_1)(\rho_1,\nu_2)-\ph(J\xi_1)(i\rho_1,\nu_2)+
\ph(\xi_2)(\rho_2,\nu_1)\\
-&\ph(J\xi_2)(i\rho_2,\nu_1)+\ps(\xi_1,\xi_2)(\rho_1,\rho_2)-
\ps(J\xi_1,\xi_2)(i\rho_1,\rho_2).  
\end{aligned}
\end{equation}
Changing to a different connection $\hat\nabla=\nabla+\Up$ as in
\ref{2.1} we know the change of splittings on $\Cal T$, and of course
the change of splitting on $\Cal H$ is determined by 
$$
\widehat{\begin{pmatrix}\tau \\ \ph\\ \ps 
\end{pmatrix}}\left(\widehat{\begin{pmatrix}\xi_1\otimes_{\Bbb C}\rho_1
  \\ \nu_1\end{pmatrix}}, \widehat{\begin{pmatrix}\xi_2\otimes_{\Bbb C}\rho_2
  \\ \nu_2\end{pmatrix}}\right)=\begin{pmatrix}\tau \\ \ph\\ \ps 
\end{pmatrix}\left(\begin{pmatrix}\xi_1\otimes_{\Bbb C}\rho_1
  \\ \nu_1\end{pmatrix}, \begin{pmatrix}\xi_2\otimes_{\Bbb C}\rho_2
  \\ \nu_2\end{pmatrix}\right). 
$$
A lengthy but straightforward computation then shows that (as
expected) $\widehat{\tau}=\tau$, while
$\widehat{\ph}(\xi)=\ph(\xi)+\Up(\xi)\tau$ and 
\begin{align*}
\widehat{\ps}(\xi_1,\xi_2)=\ps(\xi_1,\xi_2)+&\Up(\xi_1)\ph(\xi_2)+
\Up(J\xi_1)\ph(J\xi_2)+\Up(\xi_2)\ph(\xi_1)+\Up(J\xi_2)\ph(J\xi_1)\\
+&\bigg(\Up(\xi_1)\Up(\xi_2)+\Up(J\xi_1)\Up(J\xi_2)\bigg)\tau.
\end{align*}

This shows that we can nicely formulate things in (real) abstract
index notation. Following the usual conventions, we write $\ph=\ph_a$
for a section of $\Cal E_a(2)$ and $\ps=\ps_{ab}$ a section of $\Cal
E_{ab}(2)$ such that $\ph_{ab}=\ph_{cd}J_a^cJ^d_b$. Then the
transformation determined by $\Up=\Up_a$ (a section of $\Cal E_a$) can
be expressed as
\begin{equation}\label{h-trans}
\widehat{\begin{pmatrix}\tau \\ \ph_a\\ \ps_{bc} 
\end{pmatrix}}=\begin{pmatrix}\tau \\ \ph_a+\Up_a\tau\\ 
\ps_{bc}+(\delta^i_b\delta^j_c+J^i_bJ^j_c)(\Up_i\ph_j+\Up_j\ph_i+\Up_i\Up_j\tau)  
\end{pmatrix}. 
\end{equation}

\subsection{The tractor connection on $\Cal H$}\label{3.3} 
The relation between $\Cal H$ and $\Cal T$ introduced in \ref{3.2} can
be also used to compute the tractor connection $\nabla^{\Cal H}$ on
$\Cal H$. The first step towards this to describe the standard tractor
connection on $\Cal T$. This can be taken from \cite{CEMN} or directly
deduced from the principles in Section 5.2 \cite{book} (taking into
account different sign conventions for the rho--tensor). In the
notation from \ref{3.2} for the standard tractor bundle, this can be
explicitly described as 
\begin{equation}\label{eq:tconn-def}
\nabla_{\eta}^{\Cal T}\begin{pmatrix} \xi\otimes_{\Bbb C}\rho \\ \nu
\end{pmatrix}=\begin{pmatrix} (\nabla_\eta\xi)\otimes_{\Bbb
  C}\rho+\xi\otimes_{\Bbb C}\nabla_\eta\rho+\eta\otimes_{\Bbb C}\nu \\
\nabla_\eta \nu-\Rho(\eta,\xi)\rho+\Rho(\eta,J\xi)i\rho
\end{pmatrix}, 
\end{equation}
where $\Rho$ is the Schouten tensor defined in \ref{3.1}.  As before,
we view $\Rho$ as an ordinary $\binom02$--tensor field on $N$,
i.e.~$\Rho(x)$ is a real valued bilinear form on $T_xN$. Nonetheless,
the combination occurring in the second row is well defined on complex
tensor products.

Now we can use the paring defined in \eqref{eq:pair-def} to compute
the tractor connection $\nabla^{\Cal H}$ on $\Cal H$ via
$$
(\nabla^{\Cal
  H}_\eta\Ph)(s_1,s_1)=\eta\cdot(\Ph(s_1,s_2))-\Ph(\nabla_\eta^{\Cal
  T}s_1,s_2)- \Ph(s_1,\nabla_\eta^{\Cal T}s_2)
$$ 
for $\Ph\in\Ga(\Cal H)$ and $s_1,s_2\in\Ga(\Cal T)$. Computing this in
the splitting determined by a choice of connection $\nabla$ from the
c--projective class is again straightforward but a bit tedious, and
the result is
$$
\nabla_\eta^{\Cal H} \begin{pmatrix}\tau\\ \ph\\ \ps \end{pmatrix}=
\begin{pmatrix} \nabla_\eta\tau - 2\ph(\eta) \\ 
\nabla_\eta\ph+\Rho(\eta,\_)\tau-\ps(\eta,\_) \\
\nabla_\eta\ps+ A(\eta)\end{pmatrix}, 
$$
where $A$ is defined by  
$$
A(\eta)(\xi_1,\xi_2)=\Rho(\eta,\xi_1)\ph(\xi_2)+\Rho(\eta,\xi_2)\ph(\xi_1)+
\Rho(\eta,J\xi_1)\ph(J\xi_2)+\Rho(\eta,J\xi_2)\ph(J\xi_1).
$$ But this shows that we can express the tractor connection on $\Cal
H$ nicely in the the abstract index notation introduced in Section
\ref{3.2} as
\begin{equation}\label{eq:hconn-def} 
\nabla^{\Cal H}_a\begin{pmatrix} \tau \\ \ph_b
\\ \ps_{cd} \end{pmatrix} = \begin{pmatrix} \nabla_a\tau-2\ph_a \\ 
\nabla_a\ph_b+\Rho_{ab}\tau-\ps_{ab}
\\ \nabla_a\ps_{cd}+\Rho_{ac}\ph_d+\Rho_{ad}\ph_c+\Rho_{ai}J^i_c\ph_jJ^j_d+ 
\Rho_{ai}J^i_d\ph_jJ^j_c\end{pmatrix}. 
\end{equation}

\begin{remark}\label{rem3.3}
  Formally, there is no need to use the relation to parabolic
  geometries in order to obtain the elements of tractor calculus we
  need here. Following the spirit of \cite{BEG}, one could define the
  standard tractor bundle $\Cal T$ by requiring that any choice of a
  connection $\nabla$ from the c--projective class defines an
  isomorphism $\Cal T\cong (TN\otimes_{\Bbb C}\Cal E(-1,0))\oplus\Cal
  E(-1,0)$ and that the change of isomorphism for a c--projectively
  related connection $\hat\nabla$ is described by
  \eqref{eq:t-trans}. Using the transformation of the Schouten tensor
  from Proposition \ref{prop3.1} as well as formula
  \eqref{eq:cdens-trans}, one then verifies directly that the result
  of formula \eqref{eq:tconn-def} is the same for all connections in
  the c--projective class. Hence one can use it to \textit{define} a
  linear connection $\nabla^{\Cal T}$ on $\Cal T$, and then clearly
  the pair $(\Cal T,\nabla^{\Cal T})$ is canonically associated to the
  almost c--projective structure (and the choice of $\Cal E(1,0)$). This
  basically recovers all the input needed for the further
  developments.
\end{remark}

\subsection{The metricity bundle and its connection}\label{3.4} 
The \textit{metricity bundle} is the dual bundle $\Cal H^*$ to $\Cal
H$, which of course inherits a connection $\nabla^{\Cal H^*}$ from
$\nabla^{\Cal H}$ (and hence from the standard tractor connection). It
can be viewed as the bundle of Hermitean forms on $\Cal T^*$, but this
point of view is not important at this stage. We can recover the
description of the bundle and the connection just by dualizing $\Cal
H$.

From the duality, we can readily derive a canonical composition
structure of $\Cal H^*$. The annihilators of $\Cal H^1\supset\Cal H^2$
define a natural real line subbundle contained in a natural subbundle
of real rank $2m+1$. The corresponding composition series has the form 
$$
\Cal H^*=\Cal E(-2)\rpl TN\otimes\Cal E(-2)\rpl \Herm(T^*N)\otimes\Cal
E(-2).
$$ Choosing a connection $\nabla$ in the c--projective class, one
obtains a splitting of this filtration, i.e. an isomorphism $\Cal
H^*\cong\Herm(T^*N)\otimes\Cal E(-2)\oplus TN\otimes\Cal
E(-2)\oplus\Cal E(-2)$. We denote corresponding elements as column
vectors $(\si^{ab},\mu^c,\nu)^t$. We fix the pairing of this with
elements in the splitting of $\Cal H$ determined by $\nabla$ from
Section \ref{3.2} by
\begin{equation}
  \label{eq:dual-def}
\langle(\si^{ab},\mu^c,\nu)^t,(\tau,\ph_a,\ps_{cd})^t\rangle:=
\tau\nu+\ph_i\mu^i+\tfrac12\ps_{ij}\si^{ij}.  
\end{equation}
In most of what follows, the precise form of this pairing is not
important, but at one stage, we have to compute the inverse of a
non--degenerate section of $\Cal H^*$. There the factors are
important, so let us briefly explain this choice. To determine the 
inverse, one converts a Hermitean form on $\Cal T$ into a linear map
$\Cal T\to\Cal T^*$ and then converts the inverse of this map into a
bilinear form on $\Cal T^*$. The natural idea to choose a pairing thus
is to convert the Hermitean forms into linear maps, compose them to
obtain an endomorphism of $\Cal T$ and define the pairing to be its
trace. However, we have to be careful about the difference between
real and complex traces in this setting. Taking the pairing of two
elements concentrated in the one--dimensional slots as $\tau\nu$ shows
that we work with the real part of the complex trace, since these act
on a complex line bundle. But then the factor $\ps_{ai}\si^{ib}$ is
the real matrix representing a complex linear map, so the real part of
its complex trace is half the real trace, which motivates putting
$\tfrac12\ps_{ij}\si^{ij}$ into the pairing. Since we will need
inverses only in scales for which the middle slots are trivial, this
is all the information we need. The choice of a factor one for
$\ph_i\mu^i$ is motivated by the fact that, while we take a real
trace, $\ph_b$ and $\mu_c$ each represent one row and one column in a
Hermitian matrix, so usually, they would be counted with a factor
two. 

\begin{prop}\label{prop3.4}
  Changing from $\nabla$ to a c--projectively related connection
  $\hat\nabla=\nabla+\Up$ as in formula \eqref{cpdef} with a one--form
  $\Up=\Up_a$, the corresponding splittings of the composition
  structure are related by 
  \begin{equation}
    \label{eq:met-trans}
    \widehat{\begin{pmatrix} \si^{ab} \\ \mu^c \\ \nu \end{pmatrix}}= 
    \begin{pmatrix} \si^{ab} \\ \mu^c-2\Up_i\si^{ic} \\
      \nu-\Up_i\mu^i+\Up_i\Up_j\si^{ij}\end{pmatrix}
  \end{equation}
  The connection $\nabla^{\Cal H^*}$ dual to $\nabla^{\Cal H}$ is, in
  the splitting corresponding to $\nabla$, given by
\begin{equation}
  \label{eq:met-conn}
  \nabla_a^{\Cal H^*}\begin{pmatrix} \si^{bc} \\ \mu^d \\
    \nu \end{pmatrix}=\begin{pmatrix}
    \nabla_a\si^{bc}+\delta^{(b}_a\mu^{c)}+J_a^{(b}J_i^{c)}\mu^i \\ 
    \nabla_a\mu^d-2\si^{di}\Rho_{ai}+2\nu\delta^d_a \\ 
    \nabla_a\nu-\mu^i\Rho_{ai}\end{pmatrix}
\end{equation}
\end{prop}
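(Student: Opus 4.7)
The strategy is pure dualization: both formulae follow from the material already set up for $\Cal H$ in Sections \ref{3.2} and \ref{3.3} by exploiting the invariance of the pairing \eqref{eq:dual-def}.

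For part (1), I plan to determine the transformed splitting of $\Cal H^*$ by imposing invariance of the pairing, namely
\[
\langle \widehat{(\si^{ab},\mu^c,\nu)^t},\widehat{(\tau,\ph_a,\ps_{bc})^t}\rangle=\langle(\si^{ab},\mu^c,\nu)^t,(\tau,\ph_a,\ps_{bc})^t\rangle
\]
for all data $(\tau,\ph_a,\ps_{bc})$. Substituting \eqref{h-trans} on the left and expanding produces a polynomial in $\Up_a,\tau,\ph_a,\ps_{bc}$ whose coefficients I match term by term against the right. The coefficient of $\ps_{bc}$ forces $\hat\si^{ab}=\si^{ab}$; the coefficient of $\ph_a$ then yields $\hat\mu^c=\mu^c-2\Up_i\si^{ic}$, where the factor two arises from contracting $\tfrac12\si^{bc}$ against $(\de^i_b\de^j_c+J^i_bJ^j_c)(\Up_i\ph_j+\Up_j\ph_i)$ and using the Hermitean symmetries of $\si^{bc}$; finally the coefficient of $\tau$ gives $\hat\nu$ after substituting the already determined $\hat\mu$, the quadratic correction $\Up_i\Up_j\si^{ij}$ appearing from the $\Up\otimes\Up$ term in \eqref{h-trans}.

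For part (2), the dual connection is characterized by the Leibniz rule
\[
\eta\cdot\langle S,\Ph\rangle = \langle\nabla^{\Cal H^*}_\eta S,\Ph\rangle + \langle S,\nabla^{\Cal H}_\eta\Ph\rangle
\]
for $S\in\Ga(\Cal H^*)$ and $\Ph\in\Ga(\Cal H)$. I substitute \eqref{eq:hconn-def} into the right hand side, expand the left using the ordinary Leibniz rule for $\nabla$ (the densities on both sides contract to a scalar, on which $\nabla^{\Cal H^*}$ and $\nabla^{\Cal H}$ agree with the ordinary directional derivative), and isolate the components of $\nabla^{\Cal H^*}_\eta S$ by collecting the resulting expression according to $\tau$, $\ph_a$, and $\ps_{bc}$. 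The $\ps$--contributions build the top slot $\nabla_a\si^{bc}+\de^{(b}_a\mu^{c)}+J_a^{(b}J_i^{c)}\mu^i$, the symmetrization being forced by the Hermitean symmetry $\ps_{ab}=\ps_{ba}=J^i_aJ^j_b\ps_{ij}$. The $\ph$--contributions combine the $\Rho$--coupling in the bottom slot and the $-2\ph_a$ in the top slot of \eqref{eq:hconn-def} to give $\nabla_a\mu^d-2\si^{di}\Rho_{ai}+2\nu\de^d_a$; the coefficient $-2$ reflects the $\tfrac12$ in the pairing against $\ps_{ij}\si^{ij}$. Finally the $\tau$--contributions give $\nabla_a\nu-\mu^i\Rho_{ai}$ in the bottom slot.

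Both computations are mechanical. The only subtlety is bookkeeping of the numerical factors coming from the pairing \eqref{eq:dual-def} together with the symmetrizations forced by the Hermitean symmetries of $\ps_{bc}$ and $\si^{bc}$; once these are tracked, no further input is required beyond what was already used to derive \eqref{eq:hconn-def}.
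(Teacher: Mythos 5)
Your proposal is correct and follows exactly the paper's own argument: formula \eqref{eq:met-trans} is derived by imposing invariance of the pairing \eqref{eq:dual-def} under the change of splitting \eqref{h-trans}, and formula \eqref{eq:met-conn} by the duality (Leibniz) characterization of $\nabla^{\Cal H^*}$ together with \eqref{eq:hconn-def}. The factor bookkeeping you describe (the $2$ from $\tfrac12\ps_{ij}\si^{ij}$ against the Hermitean symmetrization, and the symmetrizations forced by $\si^{bc}$ and $\ps_{bc}$ being Hermitean) is precisely what the paper leaves as a ``simple direct computation,'' and it checks out.
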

\begin{proof}
  The change of splittings is determined by the fact that the pairing
  in formula \eqref{eq:dual-def} has to be the same in both
  splittings. The claimed formula then follows from \eqref{h-trans} by
  a simple direct computation. Likewise, the dual connections by
  definition satisfy 
$$
\nabla_a\left\langle\begin{pmatrix}\tau \\ \ph_b\\ \ps_{cd} 
  \end{pmatrix},\begin{pmatrix} \si^{ij} \\ \mu^k \\
    \nu \end{pmatrix}\right\rangle=
\left\langle\nabla^{\Cal H}_a\begin{pmatrix}\tau \\ \ph_b\\ \ps_{cd} 
  \end{pmatrix},\begin{pmatrix} \si^{ij} \\ \mu^k \\
    \nu \end{pmatrix}\right\rangle+\left\langle\begin{pmatrix}\tau \\
    \ph_b\\ \ps_{cd}
  \end{pmatrix},\nabla^{\Cal H^*}_a\begin{pmatrix} \si^{ij} \\ \mu^k \\
    \nu \end{pmatrix}\right\rangle,
$$
and using \eqref{eq:hconn-def}, the claimed formula follows by a
straightforward computation.
\end{proof}

\subsection{The metricity equation}\label{3.5} 
Via the general machinery of BGG--sequences, see \cite{CSS-BGG} and
\cite{Calderbank-Diemer}, any tractor bundle gives rise to a sequence
of invariant differential operators. The first of these operators is
always overdetermined. For the case of $\Cal H^*$, the corresponding
facts can also be verified by rather simple direct computations. 

The bundle $T^*N\otimes \Herm(T^*N)$ can be naturally decomposed a
trace part and a trace--free part. In fact the maps showing up in the
top line of \eqref{eq:met-conn} represent the inclusion of the trace
part. In abstract index notation, a section of $T^*N\otimes \Herm(T^*N)$
has the form $\ps_a{}^{bc}$ and this is called trace--free if
$\ps_i{}^{ia}=0$. (Note that by symmetry, there is just one possible
trace.) On the other hand, for a section $\mu^a$ of $TN$, one
immediately verifies that
$\delta_a^{(b}\mu^{c)}+J^{(b}_aJ^{c)}_i\mu^i$ is a section of
$T^*N\otimes \Herm(T^*N)$. The trace of this section is $m\mu^a$, where
$\dim_{\Bbb R}(N)=2m$. Hence we see that we can uniquely decompose any
$\ps_a{}^{bc}$ into a trace part and a tracefree part, with the latter
being given by 
$$
\tfp(\ps_a{}^{bc}):=\ps_a{}^{bc}-\tfrac1m\left(\delta_a^{(b}\ps_i{}^{c)i}+
J^{(b}_aJ^{c)}_i\ps_j{}^{ij}\right).
$$ Of course, this works in the same way if we twist by a real density
bundle.

\begin{prop}\label{prop3.5}
(i) For any section $\si^{bc}$ of $\Herm(T^*N)\otimes\Cal E(-2)$, the
  tracefree part $\tfp(\nabla_a\si^{bc})$ is independent of the choice
  of connection $\nabla$ in the c--projective class. Mapping
  $\si^{bc}$ to $\tfp(\nabla_a\si^{bc})$ is an invariant differential
  operator.

(ii) There is an invariant differential operator
  $L:\Ga(\Herm(T^*N)\otimes\Cal E(-2))\to\Ga(\Cal H^*)$ such that 
\begin{itemize}
\item the top component of $L(\si^{ab})$ equals $\si^{ab}$
\item $\si^{bc}$ satisfies $\tfp(\nabla_a\si^{bc})=0$ if and only if
  $\nabla_a^{\Cal H^*}L(\si)$ has vanishing top component (and hence
  is a one--form with values in annihilator of $\Cal H^2\subset\Cal
  H$).
\end{itemize}

(iii) There is a natural linear connection $\nabla^p$ on the bundle
$\Cal H^*$ such that for a section $\si^{ab}$,
$\tfp(\nabla_a\si^{bc})=0$ is equivalent to $\nabla^p_a
L(\si^{bc})=0$.
\end{prop}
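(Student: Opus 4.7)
The plan is to prove the three parts in order, starting with a direct computation and then using the BGG splitting/prolongation mechanism.

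For (i), I will compute $\hat\nabla_a\si^{bc}-\nabla_a\si^{bc}$ directly under a c--projective change $\hat\nabla=\nabla+\Up$. Applying \eqref{cpdef} to each of the two upper indices and adding the density contribution $-2\Up_a\si^{bc}$ from \eqref{eq:rdens-trans} at weight $w=-2$, the diagonal terms $\Up_a\si^{bc}$ cancel exactly. Setting $\la^c:=\Up_i\si^{ic}$, the surviving terms split into two groups: one collects into $2\bigl(\delta^{(b}_a\la^{c)}+J^{(b}_aJ^{c)}_i\la^i\bigr)$, while the remaining ``cross'' terms are proportional to $J^{(b}_i\si^{c)i}$. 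The Hermitian identity $\si^{bc}=J^b_eJ^c_f\si^{ef}$ forces $J^b_i\si^{ic}=-J^c_i\si^{ib}$, so $J^{(b}_i\si^{c)i}=0$ and the cross terms vanish. Thus $\hat\nabla_a\si^{bc}-\nabla_a\si^{bc}$ lies in the image of the trace inclusion, and $\tfp$ annihilates it.

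For (ii), I read off from \eqref{eq:met-conn} that the top slot of $\nabla^{\Cal H^*}_aL(\si)$ is $\nabla_a\si^{bc}+\delta^{(b}_a\mu^{c)}+J^{(b}_aJ^{c)}_i\mu^i$. Tracing over $a,c$ and using $\delta^a_a=2m$ and $J^a_a=0$ yields $\nabla_i\si^{ib}+m\mu^b$, so I am forced to take $\mu^b:=-\tfrac1m\nabla_i\si^{ib}$; the top slot is then precisely $\tfp(\nabla_a\si^{bc})$. The trace of the identity from (i) reads $\hat\nabla_i\si^{ib}-\nabla_i\si^{ib}=2m\Up_j\si^{jb}$, giving $\hat\mu^b=\mu^b-2\Up_j\si^{jb}$, which matches \eqref{eq:met-trans}. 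To pin down $\nu$ I repeat the process on the middle slot: requiring the trace over $a,d$ of $\nabla_a\mu^d-2\si^{di}\Rho_{ai}+2\nu\delta^d_a$ to vanish forces $\nu:=\tfrac1{4m}\bigl(2\si^{ij}\Rho_{ij}-\nabla_i\mu^i\bigr)$, and invariance of this formula is checked using \eqref{Rhotransf} together with the transformation of $\mu$. This defines the invariant operator $L$, and the equivalence in (ii) is then immediate.

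For (iii), I plan to set $\nabla^p:=\nabla^{\Cal H^*}-\Phi$ for a correction $\Phi\in\Ga(T^*N\otimes\End(\Cal H^*))$ obtained by prolongation. To identify $\Phi$, I differentiate the defining relations $m\mu^b=-\nabla_i\si^{ib}$ and $4m\nu=2\si^{ij}\Rho_{ij}-\nabla_i\mu^i$ and commute covariant derivatives via the Ricci identity, taking into account that the connections in the class carry torsion equal to $-\tfrac14\Cal N$. On solutions $\tfp(\nabla\si)=0$ the top equation of \eqref{eq:met-conn} lets me substitute back for $\nabla\si$, and the resulting expressions for $\nabla_a\mu^d$ and $\nabla_a\nu$ collapse to algebraic combinations of $\si,\mu,\nu$ with coefficients built from $\Rho$, the curvature of $\nabla$, and $\Cal N$. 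Declaring these combinations to be $\Phi$ gives $\nabla^p_aL(\si)=0$ on solutions, and since $\Phi$ strictly lowers the filtration on $\Cal H^*$, the top slot of $\nabla^p_aL(\si)$ coincides with the top slot of $\nabla^{\Cal H^*}_aL(\si)$, namely $\tfp(\nabla_a\si^{bc})$, which yields the converse. The main obstacle is in this last step: verifying that after the Ricci commutators all derivatives of $\si$ really reduce, on solutions, to tensorial expressions in $(\si,\mu,\nu)$, so that $\Phi$ is genuinely an $\End(\Cal H^*)$-valued one-form. This is the standard BGG prolongation pattern, but the almost complex setting introduces extra contributions from $\Cal N$ and from the non-Hermitian part of $\Ric$ visible in \eqref{Rhodef}, so the bookkeeping is delicate.
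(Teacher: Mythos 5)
Your proposal is correct and follows essentially the same route as the paper's own ``direct proof'': an explicit transformation computation for (i), a slot--by--slot determination of $L$ via the trace conditions (reproducing \eqref{eq:met-split}), and an appeal to the standard prolongation construction for (iii), which the paper likewise only outlines by citing \cite{HSSS} and \cite{CEMN}. Your key computational claims check out: the cross terms vanish because $J^{(b}_i\si^{c)i}=0$ by Hermiticity, the difference $\hat\nabla_a\si^{bc}-\nabla_a\si^{bc}$ lands in the image of the trace inclusion, and the trace conditions force $\mu^b=-\tfrac1m\nabla_i\si^{ib}$ and $\nu=\tfrac1{2m}\si^{ij}\Rho_{ij}+\tfrac1{4m^2}\nabla_i\nabla_j\si^{ij}$, in agreement with the paper.
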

\begin{proof}
  In terms of the theory of BGG sequences, $L$ is the BGG--splitting
  operator and $\si^{bc}\mapsto \tfp(\nabla_a\si^{bc})$ is the
  corresponding first BGG operator, which implies (i) and the first
  part of (ii). The second part of (ii) is an explicit version of the
  fact that $\si$ is a solution of the first BGG operator if and
  only if $\nabla^{\Cal H^*}(\si)$ is a section of the subbundle
  $\im(\partial^*)$, where $\partial^*$ is the
  Kostant--codifferential. Part (iii) follows from the general
  construction of prolongation connections in \cite{HSSS}. 

  There also is a direct proof of this result along the following
  lines. Using equation \eqref{eq:met-trans}, one immediately verifies
  that mapping $(\si_a{}^{bc},\mu_a{}^b,\nu_a)^t\in T^*N\otimes\Cal
  H^*$ to $(0,\si_i{}^{ia},\frac12\mu_i{}^i)^t\in\Cal H^*$ is
  independent of the choice of splitting, thus defining a natural
  bundle map. Given $\si^{ab}$ one verifies that, in a given
  splitting, there is a unique section $L(\si^{ab})\in\Ga(\Cal H^*)$
  with top entry equal to $\si^{ab}$ such that $\nabla^{\Cal
    H^*}_aL(\si^{bc})$ lies in the kernel of this bundle
  map. Explicitly, this simply means that the top two components of
  $\nabla^{\Cal H^*}_aL(\si^{bc})$ are trace--free. From this a direct
  computation shows that, in the splitting determined by $\nabla$, one
  gets
  \begin{equation}
    \label{eq:met-split}
    L(\si^{ab})=\begin{pmatrix} \si^{ab}\\ -\tfrac1m\nabla_i\si^{ic}
    \\ \tfrac1{4m^2}\nabla_i\nabla_j\si^{ij}+
    \tfrac{1}{2m}\si^{ij}\Rho_{ij}
    \end{pmatrix}
  \end{equation}
  Since the characterization of $L(\si^{bc})$ is independent of all
  choices we obtain an invariant operator $L$. A part of the above
  computation shows that the top component of $\nabla_a^{\Cal
    H^*}L(\si^{bc})$ equals $\tfp(\nabla_a\si^{bc})$, which implies
  (i) and (ii). A direct construction for $\nabla^p$ on the
  complexification of $\Cal H^*$ can be found in Theorem 4.6 of
  \cite{CEMN}.
\end{proof}

\subsection{Remarks on BGG equations and holonomy
  reductions}\label{3.6} 

The only result on the metricity equation we will need in what follows
is that any admissible Hermitean pseudo--Riemannian metric, whose
canonical connection lies in the c--projective class gives rise to a
solution to the metricity equation. Given $g_{ab}$, one simply takes
the inverse metric $g^{ab}$ and multiplies is by an appropriate power
of the volume density of $g$ to obtain a section $\si^{ab}$ of the
right weight. This is even parallel for the canonical connection, so
by projective invariant $\tfp(\tilde\nabla_a\si^{bc})=0$ for any
connection $\tilde\nabla_a$ in the c--projective class. It is not
difficult to show that any solution $\si^{ab}$ to the metricity
equation which is non--degenerate as a bilinear form is obtained in
this way, compare with Section 4.3 of \cite{CEMN}. 

As we have noted in the proof of Proposition \ref{prop3.5}, the
metricity equation is a so--called first BGG equation. It is known in
general for such equation that there is a subspace of so--called
\textit{normal solutions}, which are characterized by the fact that
$L(\si^{ab})$ is parallel for the tractor connection $\nabla_a^{\Cal
  H^*}$ and these exhaust all parallel sections of $\Cal H^*$. We can
now easily describe these normal solutions:

As noted above, any solution $\si^{ab}$ determines a connection
$\nabla_a$ in the c--projective class such that $\nabla_a\si^{bc}=0$.
In the scale determined by this connection, the splitting operator
simplifies to
$L(\si^{ab})=(\si^{ab},0,\tfrac1{2m}\si^{ij}\Rho_{ij})^t$. This also
shows that in this splitting 
$$
\nabla_a^{\Cal H^*}L(\si^{bc})=\begin{pmatrix} 0 \\
\tfrac1{m}\si^{ij}\Rho_{ij}\delta^b_a-2\si^{bi}\Rho_{ai}\\
\tfrac1{2m}\nabla_a\si^{ij}\Rho_{ij}
\end{pmatrix}. 
$$ Non--degeneracy of $\si^{ab}$ implies that there is an inverse
$\si_{ab}$ which is also Hermitean. Then vanishing of the middle slot
is equivalent to the fact that $\Rho_{ab}$ is proportional to
$\si_{ab}$. This is equivalent to the fact that $\si$ comes from a
Hermitean Einstein metric, see also Proposition 4.8 in \cite{CEMN}. 
This nicely complements the result in \cite{CGM} for the projective
metricity equation. 

The parallel section of a tractor bundle corresponding to a normal
solution of the metricity equation defines a holonomy reduction of the
canonical Cartan connection associated to the almost c--projective structure
to a special unitary group, see \cite{hol-red} for the general theory
of these reductions. If the parallel section $L(\si^{ab})$ of $\Cal
H^*$ is non--degenerate as a bilinear form on $\Cal T^*$ (which
essentially means that the metric is Einstein but not Ricci flat),
then its inverse defines a parallel section $\Cal H$. In the latter
interpretation, such holonomy reductions are discussed in Section 3.3
of \cite{hol-red}. Also in this dual picture, there is a parallel
story with a first BGG equation and normal solutions of this equation
as follows, compare with Sections 4.6 and 4.7 of \cite{CEMN}.

To a section $\tau$ of $\Cal E(2)$ one naturally associates a section
$L(\tau)$ of $\Cal H$ as follows. One requires $L(\tau)$ to have
$\tau$ in the top slot, and also requires that $\nabla_a^{\Cal H}L(\tau)$ has
vanishing top slot while its middle slot has vanishing symmetric
Hermitean part. 
These two conditions are immediately seen to be independent of the
choice of splitting and, using this characterization, one directly computes that, in the splitting
corresponding to $\tilde\nabla_a$, one gets
$$
L(\tau)=\begin{pmatrix} \tau \\ \tfrac12\tilde\nabla_a\tau
\\ \tfrac12(\delta^i_{(b}\delta^j_{c)}+J^i_{(b}J^j_{c)})
(\tfrac12\tilde\nabla_i\tilde\nabla_j\tau+\tilde\Rho_{ij}\tau) 
\end{pmatrix},
$$ 
where $\tilde\Rho_{ab}$ is the c--projective Schouten tensor of
$\tilde\nabla_a$.  This then implies that mapping $\tau$ to the
anti--Hermitean symmetric part of
$\tilde\nabla_a\tilde\nabla_b\tau+2\tilde\Rho_{ab}\tau$ defines an
invariant differential operator, which is the first BGG operator
associated to $\Cal H$.

If $\tau$ lies in the kernel of this operator, then it is easy to see
that $\tau$ is non--vanishing on a dense open subset. On this subset,
there is a unique connection $\nabla_a$ in the c--projective class for
which $\nabla_a\tau=0$. Then the c--projective Schouten tensor
$\Rho_{ab}$ must be symmetric and since $\tau$ solves the BGG
equation, $\Rho_{ab}$ has to be Hermitean. Conversely, it is easy to
see that any connection in the c--projective class with symmetric and
Hermitian projective Schouten tensor locally gives rise to a solution
of the first BGG equation.

In the scale determined by $\nabla_a$, we then have
$L(\tau)=(\tau,0,\tau\Rho_{ab})^t$ and this readily shows that in this
splitting $\nabla_a^{\Cal
  H}L(\tau)=(0,0,\tau\nabla_a\Rho_{bc})$. Hence normal solutions are
exactly those, for which $\Rho_{bc}$ is in addition parallel for
$\nabla_a$. If $\Rho_{bc}$ is non--degenerate as a bilinear form, it
defines a Hermitean metric, which must be admissible since it is
preserved by the minimal complex connection $\nabla_a$, which
therefore has to be its canonical connection. Finally, since the
c--projective Schouten tensor is a multiple of the Ricci curvature of
$\nabla_a$, this metric has to be Einstein.

\section{c--projectively  compact metrics}\label{4} 

\subsection{The c--projective interpretation of scalar
  curvature}\label{4.1} Let $(N,J)$ be an almost complex manifold and
let $g$ be an admissible pseudo--Riemannian metric which is Hermitean
with respect to $J$. We show that $g$ determines a section of the
tractor bundle $\Cal H^*$ over $N$, which in turn leads to a
c--projective interpretation of the scalar curvature of the canonical
connection of $g$.

On the one hand, for $\dim_{\Bbb R}(N)=2m$ the bundle of real volume
densities was defined in \ref{2.1} to be $\Cal E(-2m-2)$. As a
pseudo--Riemannian metric, $g$ determines a volume density
$\vol_g\in\Ga(\Cal E(-2m-2))$, which is parallel for any linear
connection which preserves $g$ and thus, in particular, is nowhere
vanishing. One can take any root of this density to obtain a nowhere
vanishing section of any real density bundle which is parallel for the
canonical connection $\nabla$ of $g$. We will mainly need
$\tau:=\vol_g^{-1/(m+1)}\in\Ga(\Cal E(2))$ and its inverse
$\tau^{-1}\in\Ga(\Cal E(-2))$.

On the other hand, a smooth section $H$ of $\Cal H^*$ defines a
Hermitean form on the cotractor bundle $\Cal T^*$. To such a form, one
may associate a well--defined determinant, which is non--zero if and
only if the form is non--degenerate, as follows. Take the complex
valued Hermitean extension $\tilde H$ of $H$ and for a complex local
frame $\{v_1,\dots,v_{m+1}\}$ for $\Cal T^*$ consider $\det(\tilde
H(v_i,v_j))$, which is real since the matrix is Hermitean. Changing
the local frame corresponds to a smooth function $A$ with values in
$GL(m+1,\Bbb C)$ and the corresponding change of matrix is given by
$A(\tilde H(v_i,v_j))A^*$, so the determinant changes by
multiplication with $|\det_{\Bbb C}(A)|^2=\det_{\Bbb R}(A)$. This
shows that 
$$
v_1\wedge Jv_1\wedge\dots\wedge v_{m+1}\wedge Jv_{m+1}\mapsto 
\det(\tilde H(v_i,v_j))
$$ 
induces a well defined field of linear functionals on the top
exterior power $\La^{2m+2}_{\Bbb R}\Cal T^*$. Thus we obtain a well
defined section $\det(H)\in\Ga(\La^{2m+2}_{\Bbb R}\Cal T)$. Since
$\Cal T$ is induced by a principal bundle with structure group
$SL(m+1,\Bbb C)$ endowed with a canonical connection, the bundle
$\La^{2m+2}_{\Bbb R}\Cal T$ is naturally trivial. Hence up to an
overall, non--zero constant factor, we can view $\det(H)$ as a smooth
function on $N$. It is also easy to see that $\det(H)$ can be computed
as the square root of the determinant of the real Gram matrix of $H$. 

\begin{prop}\label{prop4.1}
  Let $(N,J)$ be an almost complex manifold, $g=g_{ab}$ a
  pseudo--Riemannian metric on $N$, which is Hermitean for $J$ and
  admissible. Put $\tau:=\vol_g^{-1/(m+1)}\in\Ga(\Cal E(2))$ and let
  $g^{ab}$ be the inverse metric for $g$. Then for the almost c--projective
  structure on $N$ determined by the canonical connection $\nabla$ of
  $g$, we have 

  (i) The section $\si^{ab}:=\tau^{-1}g^{ab}$ of $\Herm(T^*N)\otimes\Cal
  E(-2)$ is a solution of the metricity equation from part (i) of
  Proposition \ref{prop3.5}.

  (ii) The image $H:=L(\si^{ab})\in\Ga(\Cal H^*)$ of $\si^{ab}$ under
  the splitting operator has the property that, up to an overall
  non--zero constant, $\det(H)$ coincides with the scalar curvature
  $S:=g^{ij}\Ric_{ij}$ of $\nabla$.
\end{prop}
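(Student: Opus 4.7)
The plan is to work in the scale determined by the canonical connection $\nabla$ itself. Since $\nabla$ lies (by definition) in the c-projective class it induces, all the splittings and explicit formulae for tractors in terms of $\nabla$ simplify dramatically, because every derivative of $\si^{ab}$ vanishes.

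For part (i) I would simply observe that $\nabla$ preserves $g_{ab}$ (hence also $g^{ab}$) and the volume density $\vol_g$, and therefore it preserves $\tau=\vol_g^{-1/(m+1)}$ and $\si^{ab}=\tau^{-1}g^{ab}$. Thus $\nabla_a\si^{bc}=0$, so in particular $\tfp(\nabla_a\si^{bc})=0$. By the c--projective invariance of $\si^{bc}\mapsto\tfp(\tilde\nabla_a\si^{bc})$ established in Proposition \ref{prop3.5}(i), this vanishing persists for every connection $\tilde\nabla$ in the class, so $\si^{ab}$ solves the metricity equation.

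For part (ii) I would first compute $L(\si^{ab})$ in the splitting determined by $\nabla$. Since $\nabla_a\si^{bc}=0$, formula \eqref{eq:met-split} collapses to
$$
L(\si^{ab})=\begin{pmatrix}\si^{ab}\\ 0\\ \tfrac1{2m}\si^{ij}\Rho_{ij}\end{pmatrix}.
$$
To evaluate the bottom slot I expand $\Rho_{ij}$ via \eqref{Rhodef} and contract with $g^{ij}$. Here the Hermitean property of $g$ is crucial: because $g^{ij}J^k_iJ^l_j=g^{kl}$, each of the three quantities $g^{ij}\Ric_{ij}$, $g^{ij}\Ric_{(ij)}$ and $g^{ij}J^k_iJ^l_j\Ric_{(kl)}$ equals the scalar curvature $S$, so the Hermitean symmetrisation term in $\Rho$ contributes nothing. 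This gives $g^{ij}\Rho_{ij}=\tfrac1{2(m+1)}S$ and hence the bottom component equals $\tfrac{S}{4m(m+1)}\tau^{-1}$.

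It remains to compute $\det(H)$. In a complex frame adapted to the splitting $\Cal T^*\cong T^*N(1,0)\oplus\Cal E(1,0)$, the vanishing of the middle slot of $L(\si^{ab})$ means the Hermitean matrix representing $\tilde H$ is block diagonal. Its complex determinant therefore factors as the product of the determinant of the $m\times m$ block coming from $\si^{bc}$ and the scalar from the bottom slot. For the Hermitean form $g^{ab}$ on $T^*N$ one has $\det_{\Bbb C}(g^{ab})=\vol_g^{-1}$ (this is the content of the standard identification $\det_{\Bbb C}(g_{ab})=\vol_g$, up to convention), so
$$
\det_{\Bbb C}(\si^{ab})=\tau^{-m}\det_{\Bbb C}(g^{ab})=\vol_g^{m/(m+1)-1}=\tau.
$$
Multiplying yields $\det(H)=\tau\cdot\tfrac{S}{4m(m+1)}\tau^{-1}=\tfrac{S}{4m(m+1)}$, which is the required non--zero constant multiple of $S$ once one identifies sections of the naturally trivialised line bundle $\La^{2m+2}_{\Bbb R}\Cal T$ with functions. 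The main subtlety, and what I expect to be the chief bookkeeping obstacle, is verifying that the weights and the real/complex determinant conventions cancel so that $\det_{\Bbb C}(\si^{ab})\cdot\nu$ really is a function and the overall normalising constant is picked up correctly by the canonical trivialisation of $\La^{2m+2}_{\Bbb R}\Cal T$.
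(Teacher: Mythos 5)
Your proposal is correct and follows essentially the same route as the paper: work in the scale of the canonical connection so that $\nabla_a\si^{bc}=0$, invoke c--projective invariance for (i), collapse formula \eqref{eq:met-split} to $(\si^{ab},0,\tfrac1{2m}\si^{ij}\Rho_{ij})^t$, use Hermiticity of $g^{ab}$ to get $g^{ij}\Rho_{ij}=\tfrac1{2(m+1)}S$, and then exploit block--diagonality to evaluate the determinant. The only cosmetic difference is that you multiply complex determinants of the blocks while the paper uses the square root of the real Gram determinant; these agree, and your weight bookkeeping ($\det_{\Bbb C}(\si^{ab})=\tau$) matches the paper's $\tau^{-m-1}(\vol_g)^{-1}=1$ computation.
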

\begin{proof}
(i) For the canonical connection $\nabla$, we of course have
  $\nabla_ag^{bc}=0$ and $\nabla_a\tau^{-1}=0$. Hence
  $\nabla_a\si^{bc}=0$ and the claim follows from c--projective
  invariance of the metricity equation.

\smallskip

(ii) We can compute in the splitting determined by $\nabla$, in which
$\nabla_a\si^{bc}=0$. In this splitting, formula \eqref{eq:met-split}
simplifies to $L(\si^{ab})=(\si^{ab},0,\frac1{2m}
\si^{ij}\Rho_{ij})^t$, and the last component equals
$\frac1{2m}\tau^{-1}g^{ij}\Rho_{ij}$. Now since $g_{ab}$ is Hermitean,
so is $g^{ab}$, and using this the defining equation \eqref{Rhodef}
for the c--projective Schouten tensor shows that
$g^{ij}\Rho_{ij}=\tfrac1{2(m+1)}g^{ij}\Ric_{ij}$. Hence we end up with
$H=(\tau^{-1}g^{ab},0,C\tau^{-1}S)^t$ for some non--zero constant $C$
and in the splitting corresponding to $\nabla$.

To compute the determinant $\det(H)$, we need to interpret the
notation as triples correctly. The choice of $\nabla$ induces a
splitting of the cotractor bundle $\Cal T^*$ into a direct sum of
complex bundle of rank $m$ and a complex line bundle. The fact that
the middle component of the triple is zero says that this splitting is
orthogonal for $H$. The real function $C\tau^{-1}S$ describes the
action on the complex line bundle, whereas $\tau^{-1}g^{ab}$ is the
real matrix associated to the restriction of $h$ to the complex rank
$n$ bundle. From the discussion of determinants above, we conclude
that, up to a non--zero overall constant (which includes $C$),
$$
\det(H)=\tau^{-1}S\sqrt{\det(\tau^{-1}g^{ab})}=
\tau^{-1}S\sqrt{\tau^{-2m}\det(g^{ab})}=S\tau^{-m-1}(\vol_g)^{-1}=S. 
$$
\end{proof}

This has immediate consequences for our setting of a manifold with
boundary and an admissible Hermitean metric in the interior. Namely,
if we assume that the almost c--projective structure determined by the
canonical connection $\nabla$ admits a smooth extension to the
boundary, then we can use this structure to prove extendibility of
several quantities associated to $\nabla$. 

\begin{cor}\label{cor4.1}
  Let $\barm$ be a smooth manifold of real dimension $2m$ with
  boundary $\partial M$ and interior $M$, let $J$ and $g$ be an almost
  complex structure and a pseudo--Riemannian metric on $M$, such that
  $g$ is Hermitean with respect to $J$ and admissible. Let $g^{ab}$ be
  the inverse of $g$, $\nabla$ its canonical connection, $\vol_g$ its
  volume density and put $\tau:=(\vol_g)^{-1/(m+1)}$. Suppose further
  that the c--projective structure determined by $\nabla$ admits a
  smooth extension to all of $\barm$, so that the tractor bundle $\Cal
  H^*$ is defined on $\barm$. 

  Then the sections
  $\si^{ab}:=\tau^{-1}g^{ab}\in\Ga(\Herm(T^*\barm)\otimes\Cal E(2))$
  and $L(\si^{ab})\in\Ga(\Cal H^*)$ and the scalar curvature $S$ of
  $\nabla$ admit smooth extensions to all of $\barm$.
\end{cor}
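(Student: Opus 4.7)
The approach is to use the prolongation connection $\nabla^p$ on $\Cal H^*$ furnished by Proposition \ref{prop3.5}(iii). By Proposition \ref{prop4.1}(i), the section $\si^{ab} := \tau^{-1}g^{ab}$ solves the metricity equation on $M$, equivalently $\nabla^p L(\si^{ab}) = 0$ on $M$. Our hypothesis that the c--projective structure determined by $\nabla$ extends smoothly to $\barm$ ensures that the bundle $\Cal H^*$, the tractor connection $\nabla^{\Cal H^*}$, and therefore $\nabla^p$, are defined and smooth on all of $\barm$. So everything reduces to showing that the $\nabla^p$--parallel section $L(\si^{ab})$ extends smoothly across $\partial M$.

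For this extension, we work locally near a boundary point $p \in \partial M$. Choose coordinates $(x^1,\ldots,x^{2m-1},\rho)$ on a neighbourhood $U$ of $p$ with $\rho$ a local defining function, and take a smooth local frame $\{e_i\}$ of $\Cal H^*|_U$. Write $L(\si^{ab}) = \sum_i f_i e_i$ on $U \cap M$. The equation $\nabla^p_{\partial_\rho}L(\si^{ab})=0$ becomes a system of linear first--order ODEs $\partial_\rho f_i + A^j_i(x,\rho) f_j = 0$ whose coefficients $A^j_i$ are smooth on all of $U$ (including at $\rho=0$), because $\nabla^p$ is. Fixing an initial slice at some $\rho_0>0$ and using the interior values of the $f_i$ as initial data, standard linear ODE theory produces a unique solution smooth in all variables on the full slab up to $\rho = 0$, which by uniqueness agrees on $U\cap M$ with the original $f_i$. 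This yields a smooth extension $\bar L \in \Ga(\Cal H^*|_U)$ of $L(\si^{ab})|_{U\cap M}$. The remaining components of $\nabla^p\bar L$ vanish on the dense subset $U\cap M$ and hence on all of $U$ by continuity, so $\bar L$ is $\nabla^p$--parallel. Local extensions on overlapping coordinate patches agree on interior overlaps and hence, by continuity, on boundary overlaps; thus they patch to a global smooth extension $\bar L \in \Ga(\Cal H^*)$ on $\barm$.

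The remaining two claims follow at once. The projection of $\bar L$ onto the quotient bundle $\Herm(T^*\barm)\otimes\Cal E(-2)$ is a smooth section that restricts to $\si^{ab}$ on $M$ and thus provides its extension. By Proposition \ref{prop4.1}(ii), $S$ equals a non--zero constant multiple of $\det L(\si^{ab})$ on $M$; since the determinant is a smooth invariant of sections of $\Cal H^*$ defined over all of $\barm$, $\det\bar L$ provides the required smooth extension of $S$. The only real technical step is the ODE extension, but the linearity of the parallel--transport equation together with smoothness of $\nabla^p$ up to the boundary make this routine; the substantive preparation has already been done, namely the invariant construction of $\nabla^p$ in Section \ref{3}, which is precisely what makes the argument available.
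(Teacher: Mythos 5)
Your proposal is correct and follows essentially the same route as the paper: extend $L(\si^{ab})$ across $\partial M$ by parallel transport for the prolongation connection $\nabla^p$ (which is defined and smooth on all of $\barm$ once the almost c--projective structure extends), then project to the top slot to extend $\si^{ab}$ and take the determinant, via Proposition \ref{prop4.1}(ii), to extend $S$. The only difference is that you spell out the parallel-transport extension as a linear ODE argument, which the paper leaves implicit.
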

\begin{proof}
  Since the almost c--projective structure determined by $\nabla$ admits a
  smooth extension to the boundary, the same is true for all bundles
  and connections naturally associated to a almost c--projective
  structure. Hence both the tractor connection $\nabla^{\Cal H^*}$ and
  the connection $\nabla^p$ from part (iii) of Proposition
  \ref{prop3.5} are defined and smooth on all of $\barm$. 

  Now by part (iii) of Proposition \ref{prop3.5}, $L(\si^{ab})$ is a
  smooth section of $\Cal H^*$ over $M$, which is parallel for the
  connection $\nabla^p$. Hence it can be smoothly extended to all of
  $\barm$ by parallel transport with respect to $\nabla^p$. Projecting
  this extension to the quotient bundle $\Herm(T^*\barm)\otimes\Cal E(2)$,
  one obtains the required extension of $\si^{ab}$. On the other hand,
  the extension of $L(\si^{ab})$ provides a Hermitean form $H$ on
  $\Cal T^*$ over all of $\barm$. Hence $\det(H)$ is a smooth function
  on $\barm$, and by Proposition \ref{prop4.1} this coincides with $S$
  over $M$, up to a non--zero constant. Hence $S$ admits a smooth
  extension to the boundary.
\end{proof}

\subsection{On extendibility of the almost c--projective
  structure}\label{4.2} In our standard situation $\barm=M\cup\partial
M$, suppose that we have given an almost complex structure $J$ and a
complex linear, minimal connection $\nabla$ on $M$. Then the condition
that the almost c--projective structure determined by $\nabla$ admits
a smooth extension to the boundary can be checked by local
computations.  Consider a complex local frame $\{\xi_1,\dots,\xi_m\}$
for $T\barm$ defined on an open subset $U\subset\barm$. Defining
$\xi_{m+i}=J\xi_i$ for $i=1,\dots, m$, we obtain a real frame
$\xi_1,\dots,\xi_n$ for $T\barm$ over $U$, with respect to which $J$
is represented by a constant matrix. The connection coefficients
$\Ph^i_{jk}$ of $\nabla$ with respect to this frame are defined on
$U\cap M$ by $\nabla_{\xi_j}\xi_k=\sum_i\Ph^i_{jk}\xi_i$. Since the
connection $\nabla$ is complex, they have the property that
$\Ph^i_{j\ell}J^\ell_k=J^i_\ell\Ph^\ell_{jk}$, where $J^i_j$ is the
(constant) coordinate representation of the almost complex structure
$J$. Now one defines the (complex) tracefree part of the connection
coefficients as
$$
\Ps^i_{jk}:=\Ph^i_{jk}-\tfrac1{2m+2}\left(\ph_j\delta^i_k+\ph_k\delta^i_j-
J^\ell_j\ph_\ell J^i_k-J^\ell_k\ph_\ell J^i_j\right),
$$
where $\ph_j:=\Ph^k_{jk}$. One immediately verifies that this
satisfies $\Ps^i_{j\ell}J^\ell_k=J^i_\ell\Ps^\ell_{jk}$ as well as
$\Ps^k_{jk}=0$. By minimality of $\nabla$, the alternation of the
$\Ph^i_{jk}$ is conjugate linear in both arguments and hence
tracefree, which implies that also $\Ps^k_{kj}=0$.

\begin{lemma}\label{lemma4.2}
  The almost c--projective structure determined by $\nabla$ admits a smooth
  extension to $\barm$ if and only if for each boundary point
  $x\in\partial M$, there is a frame $\{\xi_1,\dots,\xi_n\}$ as above,
  defined on an open subset $U\subset\barm$ with $x\in U$, such that
  the tracefree parts $\Ps^i_{jk}$ of the connection coefficients for
  $\nabla$ with respect to the frame, which are initially defined on
  $U\cap M$, admit a smooth extension to $U$.
\end{lemma}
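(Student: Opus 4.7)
The plan is to unravel the almost c--projective equivalence at the level of connection coefficients and to identify $\Ps^i_{jk}$ as precisely the c--projectively invariant piece of $\Ph^i_{jk}$ in the chosen frame. Once that is done, both directions of the lemma follow from straightforward construction arguments.

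The key computation is the transformation of $\Ph^i_{jk}$ under a c--projective change $\hat\nabla=\nabla+\Up$. Writing $\Up=\Up_j$ in the frame, formula \eqref{cpdef} yields
\begin{equation*}
\hat\Ph^i_{jk}=\Ph^i_{jk}+\Up_j\de^i_k-\Up_\ell J^\ell_j J^i_k+\Up_k\de^i_j-\Up_\ell J^\ell_k J^i_j.
\end{equation*}
Contracting $i=k$ and using $J^k_k=0$ together with $J^\ell_k J^k_j=-\de^\ell_j$, one finds $\hat\ph_j=\ph_j+(2m+2)\Up_j$, where $\ph_j:=\Ph^k_{jk}$. Substituting this back into the formula defining $\Ps^i_{jk}$, the extra terms coming from the trace correction exactly cancel the other new terms, so $\hat\Ps^i_{jk}=\Ps^i_{jk}$. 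Hence the tracefree coefficients depend only on the c--projective class of $\nabla$ (and on the choice of frame).

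For the forward direction, assume the almost c--projective structure extends smoothly to $\barm$. Then $J$ extends to a neighborhood $U$ of any boundary point $x$, one can form a complex frame $\{\xi_i\}$ on $U$, and the extended structure provides a smooth minimal complex connection $\tilde\nabla$ on $U$ whose restriction to $U\cap M$ lies in the c--projective class of $\nabla$. Its tracefree coefficients $\tilde\Ps^i_{jk}$ are smooth on $U$, and by the invariance established above they agree with $\Ps^i_{jk}$ on $U\cap M$, producing the required smooth extension.

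For the converse, suppose the $\Ps^i_{jk}$ extend smoothly to $U$. Define a linear connection $\tilde\nabla$ on $T\barm|_U$ by declaring its coefficients in the given frame to be precisely $\Ps^i_{jk}$. The identity $\Ps^i_{j\ell}J^\ell_k=J^i_\ell\Ps^\ell_{jk}$ makes $\tilde\nabla$ complex. On $U\cap M$, $\tilde\nabla$ differs from $\nabla$ by the c--projective change associated to $\Up_j=-\ph_j/(2m+2)$, so the two have the same torsion there; since the torsion of $\tilde\nabla$ is smooth on $U$ and is of type $(0,2)$ on the dense subset $U\cap M$, and the type $(0,2)$ condition is pointwise and closed, it is of type $(0,2)$ throughout $U$. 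Thus $\tilde\nabla$ is a smooth minimal complex connection on $U$ that agrees with $\nabla$ up to c--projective equivalence on $U\cap M$, so the almost c--projective structure extends to $U$. The only step calling for genuine care is the trace computation pinning down the normalization $1/(2m+2)$ in the definition of $\Ps^i_{jk}$; once that is in hand the rest of the argument is essentially formal.
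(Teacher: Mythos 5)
Your proof is correct and follows essentially the same route as the paper, which simply defers to the analogous projective argument (Proposition 2 of \cite{scalar}): you verify that $\Ps^i_{jk}$ is the c--projectively invariant part of the connection coefficients via the transformation $\hat\ph_j=\ph_j+(2m+2)\Up_j$, and then run the two constructions. The only difference is that you actually carry out the computation the paper leaves implicit, and your handling of the converse (extending the torsion type by density) is exactly the expected argument.
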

\begin{proof}
  This is completely parallel to the proof of the analogous result for
  projective structures as treated in Proposition 2 of \cite{scalar}.
\end{proof}

By Proposition \ref{prop2.2}, extendibility of the almost c--projective
structure determined by $\nabla$ is a necessary condition for
c--projective compactness of $\nabla$. However, the two conditions are
of different nature, since only very specific connections in a
c--projective class can be c--projectively compact. For example,
Proposition \ref{prop2.3} shows that c--projective compactness for a
connection preserving a volume density implies uniform volume growth
of a specific rate (depending on the dimension). Also, by definition a
c--projectively compact connection cannot admit a smooth extension to
any neighborhood of a boundary point.

\subsection{The case of non--zero scalar curvature}\label{4.3} 
Our second main result is that for an admissible pseudo--Riemannian
metric, extendibility of the almost c--projective structure determined by the
canonical connection $\nabla$ together with a rather weak condition on
the scalar curvature of $\nabla$ implies c--projective compactness.

\begin{thm}\label{thm4.3}
  Suppose that $\barm=M\cup\partial M$, $J$ and $g$ satisfy the
  conditions of Corollary \ref{cor4.1}, so $J$ and $g$ are defined on
  $M$, $g$ is admissible and the almost c--projective structure defined by
  its canonical connection $\nabla$ admits a smooth extension to
  $\barm$. Assume further, that the connection $\nabla$ itself does
  not admit a smooth extension to any open neighborhood of a boundary
  point.

  Suppose that $x\in\partial M$ is such that the smooth extension
  $S:\barm\to\Bbb R$ of the scalar curvature of $\nabla$ provided by
  Corollary \ref{cor4.1} has the property that $S(x)\neq 0$. Then $g$
  is c--projectively compact on a neighborhood of $x$ in $\barm$.
\end{thm}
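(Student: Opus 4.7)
I would verify the criterion of Proposition~\ref{prop2.3}(2). Since $\nabla$ preserves the density $\tau:=(\vol_g)^{-1/(m+1)}\in\Ga(\Cal E(2))$, it is enough to show that $\tau$ extends smoothly across $\partial M$ near $x$ and that the extension is a defining density for $\partial M$. The natural idea is to extend $\tau$ using the tractor data already extended by Corollary~\ref{cor4.1}, and then to extract both the boundary vanishing and its order from the non-degeneracy of the Hermitean form $H:=L(\sigma^{ab})\in\Ga(\Cal H^*)$, which by Proposition~\ref{prop4.1}(ii) satisfies $\det H=cS$ for a nonzero constant and hence is non-degenerate on a neighborhood $U$ of $x$ in $\barm$ since $S(x)\neq 0$.

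\textbf{Key steps.} First, with $\sigma^{ab}=\tau^{-1}g^{ab}$ smoothly extended to $\barm$ by Corollary~\ref{cor4.1}, compute $\det H$ in the splitting induced by $\nabla$ on $U\cap M$. In this splitting the formula \eqref{eq:met-split} shows that $H$ reduces to the block-diagonal triple $(\sigma^{ab},0,\tfrac{1}{2m}\sigma^{ij}\Rho_{ij})$, with the lower block a one-dimensional Hermitean form proportional to $S\tau^{-1}$; equating the product of block determinants with $\det H=cS$ yields an identity $\tau=c'\det_{\Bbb C}(\sigma^{ab})$ (up to a nonzero constant) of sections of $\Cal E(2)$ on $U\cap M$, which exhibits a smooth extension of $\tau$ to all of $U$. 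Second, $\tau$ must vanish on $\partial M\cap U$: otherwise, near a nonvanishing boundary point, the extended $\tau$ would be a smooth non-vanishing scale for the extended c-projective class and hence would determine a smooth connection in that class agreeing with $\nabla$ on the interior, yielding a smooth extension of $\nabla$ past a boundary point, contrary to hypothesis. Third, from non-degeneracy of $H(x)$ as a Hermitean form on the $(m{+}1)$-dimensional complex space $\Cal T^*(x)$, restricted to the codimension-one subspace $T^*_x\barm\otimes_{\Bbb C}\Cal E(1,0)_x\subset\Cal T^*(x)$, standard linear algebra forces $\sigma^{ab}(x)$ to have complex rank at least $m-1$; combined with $\det_{\Bbb C}\sigma^{ab}(x)=c'{}^{-1}\tau(x)=0$, the rank is exactly $m-1$, so the kernel of $\sigma^{ab}(x)$ is complex one-dimensional. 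Fourth, the metricity equation $\tfp(\nabla_a\sigma^{bc})=0$ identifies the differential $d\det_{\Bbb C}\sigma^{ab}(x)$, evaluated in the kernel direction, with a component of the middle slot $-\tfrac1m\nabla_i\sigma^{ic}$ of $L(\sigma^{ab})$; non-degeneracy of $H(x)$ in a direction complementary to the subspace above forces this to be nonzero, giving $d\tau(x)\neq 0$. With $\tau$ thus a defining density for $\partial M$ near $x$, Proposition~\ref{prop2.3}(2) concludes that $\nabla$, and hence $g$, is c--projectively compact in a neighborhood of $x$.

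\textbf{Main obstacle.} The delicate step is the fourth one: translating the pointwise rank-drop of $\sigma^{ab}$ at $x$ into a genuine simple zero of $\det_{\Bbb C}(\sigma^{ab})$, equivalently, first-order vanishing of $\tau$. Pointwise rank alone does not control the order of vanishing of the determinant; one must exploit both the metricity equation and the fact that the full tractor $H$---including the contribution of the Schouten trace that encodes $S$---remains non-degenerate at $x$ to rule out higher-order vanishing. This is exactly where the hypothesis $S(x)\neq 0$ enters essentially and where the real tractor calculus developed in Section~\ref{3} does the substantive work.
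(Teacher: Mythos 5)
Your proposal is correct in outline and shares the paper's skeleton: reduce to Proposition~\ref{prop2.3}(2), extend $\tau=(\vol_g)^{-1/(m+1)}$ smoothly to the boundary, show it vanishes there by the non-extendability hypothesis on $\nabla$ (this step is verbatim the paper's argument), and then show its first derivative is nowhere zero along $\partial M$ using non-degeneracy of $H=L(\si^{ab})$, which is exactly where $S(x)\neq 0$ enters. Where you genuinely diverge is in the implementation of the first and last steps. The paper never touches $\det_{\Bbb C}(\si^{ab})$ directly: it inverts $H$ to get a smooth section $\Ph\in\Ga(\Cal H)$, reads off the extension of $\tau$ from the splitting-independent top slot $C^{-1}S^{-1}\tau$ of $\Ph$, and obtains $\hat\nabla_a\tau=2CS\ph_a$ along the boundary from the vanishing of the top slot of $\nabla^{\Cal H}_a\Ph$ (a consequence of Proposition~\ref{prop3.5}(ii) transported through the inverse), with $\ph_a\neq 0$ forced by non-degeneracy of $\Ph$. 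Your route instead works with the determinant and adjugate of the top slot $\si^{ab}$ itself: the identity $\tau=c'\det_{\Bbb C}(\si^{ab})$ gives the extension at once, the corank-one analysis identifies the kernel covector $\al$, and the pure-trace form of $\hat\nabla_a\si^{bc}$ coming from the metricity equation converts $d\det_{\Bbb C}\si$ into (a multiple of) $\al_a\,\mu^i\al_i$, which is nonzero because $H(x)$ must pair the radical of $\si(x)$ non-trivially against the complementary line. The two arguments are essentially dual to one another --- yours avoids introducing $\Ph$ and the bundle $\Cal H$ altogether, at the price of the adjugate computation; the paper's avoids any determinant calculus but needs the inverse tractor and the duality $\nabla^{\Cal H}_a\Ph(s,t)=-\nabla^{\Cal H^*}_aH(A^{-1}s,A^{-1}t)$.

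One imprecision you should repair in step four: you write the middle slot of $L(\si^{ab})$ as $-\tfrac1m\nabla_i\si^{ic}$ with $\nabla$ the canonical connection, but in that splitting this slot is identically zero on $M$ (since $\nabla_a\si^{bc}=0$) and the splitting does not exist at $x$. The quantity you actually need is the middle slot $\hat\mu^c=-\tfrac1m\hat\nabla_i\si^{ic}$ computed in the splitting of a connection $\hat\nabla$ in the (extended) c--projective class that is smooth up to the boundary; by c--projective invariance the metricity equation still gives $\hat\nabla_a\si^{bc}=-\de^{(b}_a\hat\mu^{c)}-J^{(b}_aJ^{c)}_i\hat\mu^i$, and it is $\hat\mu^c\al_c\neq 0$ that non-degeneracy of $H(x)$ delivers. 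With that substitution the argument closes.
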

\begin{proof}
  We denote by $\vol_g$ the volume density of $g$ and put
  $\tau:=(\vol_g)^{-1/(m+1)}\in\Ga(\Cal E(2))$. By Proposition
  \ref{prop2.3}, we can prove the theorem by showing that, locally
  around $x$, $\tau$ can be extended by zero to a defining density for
  $\partial M$.

  By restricting to an appropriate neighborhood of $x$ in $\barm$, we
  may assume that $S$ is nowhere vanishing. Then the section
  $H:=L(\si^{ab})\in\Ga(\Cal H^*)$, where $\si^{ab}=\tau^{-1}g^{ab}$,
  defines a Hermitean form on the bundle $\Cal T^*$ which by
  Proposition \ref{prop4.1} is everywhere non--degenerate. Hence we
  can consider its inverse, which is a non--degenerate Hermitean form
  on $\Cal T$ and therefore defines a smooth section $\Ph\in\Ga(\Cal
  H)$. Over $M$, we can work in the splitting determined by
  $\nabla$. From the proof of Proposition \ref{prop4.1} we know that,
  in this splitting, we have $H=(\tau^{-1}g^{ab},0,C\tau^{-1}S)^t$ for
  some non--zero constant $C$. This shows that in the splitting of
  $\Cal H$ corresponding to $\nabla$, we get
  $\Ph=(C^{-1}S^{-1}\tau,0,\tau g_{ab})^t$, compare with Section
  \ref{3.4}.

  Now the component $C^{-1}S^{-1}\tau$ is the image of $\Ph$ under the
  canonical projection $\Ga(\Cal H)\to\Ga(\Cal E(2))$, so this is
  independent of the choice of splitting. Since $\Ph$ and $S$ are
  smooth up to the boundary, this shows that $\tau$ admits a smooth
  extension to the boundary. Next we claim that this extension
  vanishes along the boundary. Indeed, if $\tau(y)\neq 0$ for some
  $y\in\partial M$, then we can consider a neighborhood $V$ of
  $y\in\barm$ on which $\tau$ is nowhere vanishing. It is then well
  known that there is a unique connection $\hat\nabla$ in the
  restriction of the c--projective class to $V$ for which $\tau$ is
  parallel. But then on $V\cap M$, the section $\tau$ is parallel for
  both $\nabla$ and $\hat\nabla$, so the two connections have to agree
  on $V\cap M$. Thus $\hat\nabla$ provides a smooth extension of
  $\nabla$ to $V$ which contradicts our assumptions.

  Knowing that $\tau$ vanishes along the boundary, it suffices to
  prove that its derivative with respect to any connection which is
  smooth up to the boundary is nowhere vanishing along the
  boundary. To prove this, we have to convert the information on
  $\nabla^{\Cal H^*}_aH$ provided by part (ii) of Proposition
  \ref{prop3.5} into the fact that $\Ph$ satisfies a differential
  equation. Let us first make the definition of $\Ph$ as the inverse of
  $H$ more explicit. As a non--degenerate Hermitean form on $\Cal
  T^*$, the section $H$ gives rise to a isomorphism $A:\Cal T^*\to
  \Cal T$ of vector bundles. Using the (real) dual pairing between
  $\Cal T$ and $\Cal T^*$, this is characterized by $\langle
  A(\xi),\eta\rangle=H(\xi,\eta)$ for $\xi,\eta\in\Cal T^*$. Then one
  uses the inverse $A^{-1}$ to define $\Ph$ via $\Ph(s,t):=\langle
  s,A^{-1}(t)\rangle$ for $s,t\in\Cal T$.

  Since the connection $\nabla^{\Cal H}$ is induced by the standard
  tractor connection on $\Cal T$, also the dual connection
  $\nabla^{\Cal H^*}$ is induced by the tractor connection. Together
  with the above description, this implies that $\nabla_a^{\Cal
    H}\Ph(s,t)=-\nabla_a^{\Cal H^*}H(A^{-1}(s),A^{-1}(t))$. Now we
  need only rather rough information to conclude the argument. Recall
  from \ref{3.2} that the tractor bundle $\Cal T$ contains a natural
  complex line subbundle $\Cal T^1$. Dually, one has the annihilator
  $(\Cal T^1)^\o\subset\Cal T^*$, which is a complex subbundle of
  complex corank one. Now from above we know that the projection of
  $\Ph$ to the quotient bundle $\Cal E(2)$ of $\Cal H$ vanishes along
  the boundary. This implies that, along the boundary, $A^{-1}(\Cal
  T^1)\subset (\Cal T^1)^\o$. On the other hand, part (ii) of
  Proposition \ref{prop3.5} says that $\nabla^{\Cal H^*}_a H$ has
  trivial projection to $\Herm(T^*N)\otimes\Cal E(-2)$, which exactly
  means that $\nabla^{\Cal H^*}_a H$ vanishes (everywhere) upon
  insertion of two sections from $(\Cal T^1)^\o\subset\Cal T^*$. Hence
  we conclude that inserting two sections of $\Cal T^1$ into
  $\nabla_a^{\Cal H}\Ph$, the result vanishes along the boundary.

In terms of splitting into slots, this simply means that the top slot
of $\nabla_a^{\Cal H}\Ph$ (which actually is independent of the choice
of splitting) vanishes along the boundary. Now consider a connection
$\hat\nabla$ in the c--projective class which is smooth up to the
boundary. In the corresponding splitting we must have
$\Ph=(C^{-1}S^{-1}\tau,\ph_a,\ps_{bc})^t$ for some sections $\ph_a$ and
$\ps_{bc}$, which are smooth up to the boundary. Moreover, since
$C^{-1}S^{-1}\tau$ vanishes along the boundary, non--degeneracy of
$\Ph$ implies that $\ph_a$ is nowhere vanishing along the boundary
(since otherwise $\Ph$ would be degenerate). Using formula
\eqref{eq:hconn-def} for $\nabla^{\Cal H}$, we conclude that 
$$
C^{-1}\tau\hat\nabla_a S^{-1}+C^{-1}S^{-1}\hat\nabla_a\tau-2\ph_a
$$ 
vanishes along the boundary. Since $\tau$ vanishes along the boundary,
we conclude that, along the boundary, $\hat\nabla_a\tau=2CS\ph_a$ and
thus is nowhere vanishing. 
\end{proof}

\subsection{Scalar curvature of c--projectively compact
  metrics}\label{4.4} 

For a c--pro\-jec\-tive\-ly compact metric, Corollary \ref{cor4.1}
shows that the scalar curvature of the canonical connection admits a
smooth extension to the boundary. As a first step towards a converse
of Theorem \ref{thm4.3}, we show that the boundary value of this
extension is non--zero on a dense open subset of the boundary.

\begin{prop}\label{prop4.4}
  Let $\barm$ be a smooth manifold of real dimension $2m$ with
  boundary $\partial M$ and interior $M$ and let $J$ be an almost
  complex structure on $M$. Consider a an admissible
  pseudo--Riemannian Hermitean metric $g$ on $(M,J)$, which is
  c--projectively compact. Let $\nabla$ be the canonical connection of
  $g$ and $S:\barm\to\Bbb R$ be the smooth extension of the scalar
  curvature of $\nabla$ guaranteed by Corollary \ref{cor4.1}. Then we
  have.

(1) The set $\{x\in\partial M:S(x)\neq 0\}$ is open and dense in
$\partial M$.

(2) Let $x\in\partial M$ be such that $S(x)\neq 0$, let $\rho$ be a
defining function for $\partial M$ on some neighborhood of $x$ and put
$\th:=-d\rho\o J$. Then locally around $x$, $g$ admits an asymptotic
form
$$
g=-\tfrac{m}2(g^{ij}\Rho_{ij})^{-1}(\tfrac{d\rho^2}{\rho^2}+
\tfrac{\th^2}{\rho^2})+\tfrac{h}{\rho},  
$$ where $\Rho_{ij}$ is the c--projective Schouten tensor of $\nabla$
and $h$ is a Hermitean bilinear form, which is smooth up to the
boundary.

(3) Suppose that $x\in\partial M$ is as in (2) and that
$\xi,\eta\in\frak X(\barm)$ vector fields, which are smooth up to the
boundary. Then locally around $x$, the function $\rho^2g(\xi,\eta)$
admits a smooth extension to the boundary. If for at least one of the
vector fields the insertion into $d\rho$ vanishes along the boundary
and for at least one the insertion into $\th$ vanishes along the
boundary, then even $\rho g(\xi,\eta)$ admits a smooth extension to
the boundary.
\end{prop}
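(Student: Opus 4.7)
The plan is to exploit the tractor calculus of Section \ref{3} by analyzing the section $H:=L(\sigma^{ab})\in\Ga(\Cal H^*)$ associated to $\sigma^{ab}:=\tau^{-1}g^{ab}$, where $\tau:=\vol_g^{-1/(m+1)}$. By Proposition \ref{prop2.3} and c--projective compactness of $\nabla$, $\tau$ extends by zero to a defining density for $\partial M$, so locally $\tau=\rho\tilde\tau$ with $\tilde\tau\in\Ga(\Cal E(2))$ smooth and non-vanishing up to $\partial M$; moreover $\hat\nabla:=\nabla+d\rho/(2\rho)$ extends smoothly to $\barm$, and by Corollary \ref{cor4.1} so does $H$. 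The key idea is to compare the components of $H$ and (where $H$ is nondegenerate) of its inverse $\Phi:=H^{-1}\in\Ga(\Cal H)$ in the $\nabla$--splitting, in which $\nabla_a\sigma^{bc}=0$ and $\nabla_a\tau=0$ yield simple expressions, with those in the $\hat\nabla$--splitting, where smoothness up to the boundary is available.

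For parts (2) and (3): by Proposition \ref{prop4.1} and the Hermitian identity $g^{ij}\Rho_{ij}=S/(2(m+1))$, the $\nabla$--splitting gives $H=(\tau^{-1}g^{ab},\,0,\,S\tau^{-1}/(4m(m+1)))^t$, and near a boundary point $x$ with $S(x)\ne 0$, $H$ is nondegenerate on a neighborhood in $\barm$, so $\Phi$ extends smoothly there with $\Phi=(C^{-1}S^{-1}\tau,\,0,\,\tau g_{ab})^t$ for $C=1/(4m(m+1))$, as in the proof of Theorem \ref{thm4.3}. Transforming $\Phi$ to the $\hat\nabla$--splitting via \eqref{h-trans} with $\Up=d\rho/(2\rho)$ and using the identity $(\de^i_b\de^j_c+J^i_bJ^j_c)(d\rho)_i(d\rho)_j=(d\rho)_b(d\rho)_c+\th_b\th_c$, smoothness of the bottom component $\hat\psi_{bc}$ yields, after division by $\tau=\rho\tilde\tau$,
\[
g_{bc}=-\frac{1}{4CS\rho^2}\bigl((d\rho)_b(d\rho)_c+\th_b\th_c\bigr)+\frac{h_{bc}}{\rho},
\]
with $h_{bc}:=\hat\psi_{bc}/\tilde\tau$ smooth up to $\partial M$; a direct check shows $-1/(4CS)=-\tfrac{m}{2}(g^{ij}\Rho_{ij})^{-1}$, giving (2). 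Multiplying by $\rho^2$ (respectively $\rho$) and observing that, under the stated conditions on $\xi,\eta$, each of $d\rho(\xi)d\rho(\eta)$ and $\th(\xi)\th(\eta)$ vanishes on $\partial M$ to the order required to control the singular prefactor, yields (3).

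For part (1), openness of $\{S\neq 0\}$ is immediate from continuity of $S$. For density, suppose $S\equiv 0$ on an open $U\subset\partial M$ and aim for a contradiction. Transforming $H$ itself to the $\hat\nabla$--splitting via \eqref{eq:met-trans} and using $\tau=\rho\tilde\tau$, the smoothness of the resulting components $\hat\sigma^{ab}$, $\hat\mu^c$, $\hat\nu$ gives identities such as
\[
\frac{S}{m(m+1)}+\frac{g^{ij}(d\rho)_i(d\rho)_j}{\rho^2}=4\rho\tilde\tau\hat\nu,
\]
which is smooth on $\barm$ and $O(\rho)$ near $U$. On $U$ this forces $g^{ij}(d\rho)_i(d\rho)_j=o(\rho^2)$, hence $\sigma^{ij}(d\rho)_i(d\rho)_j=o(\rho)$. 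Combined with the analogous smoothness condition on $\hat\mu^c$, the Hermitian symmetry between $d\rho$ and $\th$, and the volume identity $\det(\sigma^{ab})=\rho^2\tilde\tau^2$ (equivalent to $\det g^{ab}=\rho^{2m+2}\tilde\tau^{2m+2}$), one extracts a higher-order vanishing of $\sigma^{ab}(d\rho,\cdot)$ and $\sigma^{ab}(\th,\cdot)$ near $U$ that is inconsistent with nondegeneracy of $g^{ab}$ on $M$. The main obstacle is this density statement: while parts (2) and (3) fall out mechanically from the two-splitting comparison of $H$ and $\Phi$, part (1) requires careful bookkeeping of orders of vanishing of several auxiliary quantities and a delicate use of the volume-growth constraint encoded in the defining density $\tau$.
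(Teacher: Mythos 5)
Your strategy coincides with the paper's: both arguments analyze $H=L(\si^{ab})$ and its inverse $\Ph\in\Ga(\Cal H)$ in the two splittings determined by $\nabla$ and by $\hat\nabla=\nabla+\tfrac{d\rho}{2\rho}$, and your treatment of parts (2) and (3) is correct and matches the paper's proof essentially step by step, including the identification $-\tfrac1{4CS}=-\tfrac m2(g^{ij}\Rho_{ij})^{-1}$ and the observation that smoothness of $\Ph$ up to the boundary in the $\hat\nabla$--splitting is what delivers the asymptotic form.

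The gap is in part (1), and you have half--diagnosed it yourself. The ingredients you assemble are exactly the right ones: smoothness of the three slots of $H$ in the $\hat\nabla$--splitting, which (using $\tau=\rho\hat\tau$) gives, for one--forms $\ph_a,\ps_a$ smooth up to the boundary, $g^{ij}\ph_i\ps_j=O(\rho)$ in general, $g^{ij}\rho_i\ph_j=O(\rho^2)$, and --- when $S$ vanishes on the boundary --- $g^{ij}\rho_i\rho_j=O(\rho^3)$; together with the volume constraint encoded in $\tau$ being a defining density. But the endgame you describe does not work as stated: higher--order vanishing of $\si^{ab}\rho_b$ and $\si^{ab}\th_b$ at the boundary is not ``inconsistent with nondegeneracy of $g^{ab}$ on $M$'' --- $g^{ab}$ is nondegenerate on the interior no matter how fast these contractions decay. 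The actual contradiction is a quantitative order count on the determinant. Writing the Gram matrix $A$ of $g^{ab}$ in a coframe whose first two elements are $\rho_a$ and $J^i_a\rho_i$, the three estimates above show that one may pull a factor $\rho^2$ out of each of the first two rows, a factor $\rho$ out of each remaining row, and a further factor $\rho$ out of each of the first two columns, so $\det(A)=\rho^{2m+4}f$ with $f$ smooth up to the boundary. On the other hand $\vol_g=\tau^{-m-1}=\rho^{-m-1}\hat\tau^{-m-1}$ forces $\rho^{2m+2}\det(A^{-1})$ to have a nonzero boundary value, i.e.\ $\det(A)$ vanishes to order exactly $2m+2$. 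The mismatch $2m+4>2m+2$ is the contradiction. Without carrying out this count, your part (1) remains a plausible plan rather than a proof.
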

\begin{proof}
As in the proofs of Proposition \ref{prop4.1} and of Theorem
\ref{thm4.3} we consider the tractor $H:=L(\si^{ab})\in\Ga(\Cal H^*)$
determined by the solution $\si^{ab}=\tau^{-1}g^{ab}$ of the metricity
equation determined by $g=g_{ab}$. Knowing that $g$ is c--projectively
compact, we get the additional information that certain specific
connections admit a smooth extension to the boundary. Take a point
$x\in\partial M$ and a local defining function $\rho$ for the boundary
defined on a neighborhood $U$ of $x$ in $\barm$. Consider the
c--projective modification $\hat\nabla:=\nabla+\tfrac{d\rho}{2\rho}$ of
$\nabla$ on $U\cap M$, which by c--projective compactness admits a
smooth extension to all of $U$.

(1) With a view towards contradiction, we assume that $S$ vanishes on
an open subset of $\partial M$ and choose $U$ in such a way that $S$
vanishes on $U\cap\partial M$. From the proof of Proposition
\ref{prop4.1}, we know that on $U\cap M$ and in the scale determined
by $\nabla$, we have $H=(\tau^{-1}g^{ab},0,C\tau^{-1}S)^t$ for some
non--zero constant $C$. Now we can use Proposition \ref{prop3.4} to
compute $H$ on $U\cap M$ in the splitting corresponding to
$\hat\nabla$. By formula \eqref{eq:met-trans}, we obtain
$$
H=\begin{pmatrix} \tau^{-1}g^{ab}
\\ -\frac{1}{\rho}\tau^{-1}\rho_ig^{ic}\\ 
C\tau^{-1}S+\tau^{-1}\tfrac{1}{4\rho^2}\rho_j\rho_kg^{jk}
\end{pmatrix},
$$ where we write $\rho_a$ for $d\rho$. Since $\hat\nabla$ admits a
smooth extension to the boundary, all three slots in this expression
must have the same property. Now recall from Proposition \ref{prop2.3}
that $\tau$ is a defining density for $\partial M$, and thus of the
form $\rho\hat\tau$ for a density $\hat\tau$, which is
nowhere--vanishing along $U\cap\partial M$. Hence $\hat\tau^{-1}$ has
the same property and $\tau^{-1}=\rho^{-1}\hat\tau^{-1}$.

For the top slot we obtain $\hat\tau^{-1}\tfrac{1}{\rho}g^{ab}$. This
implies that for one--forms $\ph=\ph_a$ and $\ps=\ps_b$, which are
smooth up to the boundary, we can write $g^{ij}\ph_i\ps_j$ as $\rho f$
for a function $f$ which admits a smooth extension to the
boundary. For the the middle slot we get
$\hat\tau^{-1}\tfrac{1}{\rho^2}\rho_ig^{ic}$, so
$\tfrac{1}{\rho^2}\rho_ig^{ic}$ admits a smooth extension to the
boundary. This means that for each one--form $\ph=\ph_a$ which is
smooth up to the boundary, $g^{ij}\rho_i\ph_j$ is of the form
$\rho^2f$ for a function $f$ which is smooth up to the boundary. For
the bottom slot, we get
$$
\hat\tau^{-1}\tfrac{1}{4\rho}\left(4CS+
\tfrac{1}{\rho^2}\rho_i\rho_jg^{ij}\right). 
$$
Now both summands in the bracket admit a smooth extension to the
boundary, so we conclude that they have to add up to zero along the
boundary. Assuming that $S$ vanishes along the boundary, we conclude
that $\tfrac{1}{\rho^2}\rho_i\rho_jg^{ij}$ goes to zero along along
the boundary, so $\rho_i\rho_jg^{ij}$ can be written as $\rho^3 f$ for
some function $f$ which is smooth up to the boundary. 

Let us collect the above facts and use them to obtain a description of
the Gram matrix of $g^{ab}$ in terms of an appropriate local frame
consisting of one--forms which are smooth up to the boundary. We use a
complex frame in a real picture, which has $\rho_a$ and $J_a^i\rho_i$
as the first two elements, and then continues with appropriate pairs
of the form $\ph_a$ and $J_a^i\ph_i$. Denoting the resulting
(symmetric) matrix of inner products by $A=(a_{k\ell})$ we know that all
entries vanish along the boundary, the entries in the first two rows
and columns are of the form $\rho^2f_{k\ell}$ for functions
$f_{k\ell}$ which are smooth up to the boundary, while the top left
corner has the form $\rho^3\left(\begin{smallmatrix} a & 0\\ 0 & a
\end{smallmatrix}\right)$ for a function $a$ which is smooth up to the
boundary. Computing $\det(A)$ we can pull out a factor of $\rho^2$
from each of the first two rows and a factor of $\rho$ from each of
the subsequent rows, and then finally one factor of $\rho$ from each
of the first two columns, so $\det(A)=\rho^{2m+4}f$ for some function
$f$ which is smooth up to the boundary. 

On the other hand,
computing the volume density of $g$, we get the product of
$\sqrt{\det(A^{-1})}$ with a nowhere vanishing density which expresses
the change from a local coordinate frame to the frame of the tangent
bundle which is dual to the frame used above. By definition, the
result is $\tau^{-m-1}=\rho^{-m-1}\hat\tau^{-m-1}$. But this implies
that $\rho^{2m+2}\det(A^{-1})$ admits a smooth extension to the
boundary with non--vanishing boundary value $\tilde f$.  Together with
the above, we get
$$
\rho^{2m+2}=\rho^{2m+2}\det(A^{-1})\det(A)=\tilde f\rho^{2m+4}f, 
$$ 
for a function $f$ which is smooth up to the boundary. This leads to
$f=\rho^{-2}\tfrac{1}{\tilde f}$, a contradiction.

\medskip

(2) Knowing that $S(x)\neq 0$, we can choose $U$ in such a way that
$S$ is nowhere vanishing, so also $g^{ij}\Rho_{ij}$ is nowhere
vanishing. As in the proof of Theorem \ref{thm4.3}, we can thus form
the inverse $\Ph\in\Ga(\Cal H)$ of $H$. On $U\cap M$ and in the
splitting corresponding to $\nabla$, we know from that proof that
$\Ph=(2m\tau(g^{ij}\Rho_{ij})^{-1},0,\tau g_{ab})$. Computing the
expression of in the splitting corresponding to the connection
$\hat\nabla=\nabla+\tfrac{d\rho}{2\rho}$ as in part (1), we again know
that all slots in this expression admit smooth extensions to the
boundary. Using formula \eqref{h-trans} we conclude that in the
splitting corresponding to $\hat\nabla$, we get
\begin{equation}\label{hatPhi}
\Ph=\begin{pmatrix} 2m\tau(g^{ij}\Rho_{ij})^{-1} 
\\ m\tfrac{\tau}{\rho}(g^{ij}\Rho_{ij})^{-1}\rho_a \\ 
\tau g_{bc}+m(g^{ij}\Rho_{ij})^{-1}\tau\tfrac{1}{2\rho^2}(\delta^i_b\delta^j_c
+J^i_bJ^j_c)\rho_i\rho_j \end{pmatrix}. 
\end{equation}
The first two slots evidently admit a smooth extension to the
boundary. On $U$, we then define $h_{bc}:=\rho
g_{bc}+\tfrac{m}{2\rho}(g^{ij}\Rho_{ij})^{-1}(\rho_b\rho_c+\th_b\th_c)$. Then
the bottom slot in \eqref{hatPhi} equals $\hat\tau h_{bc}$, so
$h_{bc}$ admits a smooth extension to the boundary. But this exactly
means that we get the required asymptotic form.

\medskip

(3) From part (2) we conclude that 
$$
\rho^2 g(\xi,\eta)=-\tfrac{m}2(g^{ij}\Rho_{ij})^{-1}
(d\rho(\xi)d\rho(\eta)+\th(\xi)\th(\eta))+\rho h(\xi,\eta). 
$$ 
The right hand side evidently admits a smooth extension to the
boundary. Under the additional assumptions on $\xi$ and $\eta$, we can
write one of the factors $d\rho$ and one of the factors $\th$ as
$\rho$ times a function which admits a smooth extension to the
boundary, which implies the second claim.
\end{proof}

\subsection{Necessity of the asymptotic form}\label{4.5} 
The last step to obtain converses to Theorems \ref{thm2.6} and
\ref{thm4.3} is showing that the scalar curvature of a c--projectively
compact metric is asymptotically (locally) constant. 

\begin{thm}\label{thm4.5}
Let $\barm$ be a smooth manifold with boundary $\partial M$ and
interior $M$. Let $J$ be an almost complex structure on $M$ and let
$g$ be an admissible Hermitean pseudo--Riemannian metric on $(M,J)$
which is c--projectively compact. Suppose that the resulting extension
of $J$ has the property that $\partial M$ is non--degenerate and that
the Nijenhuis tensor has asymptotically tangential values. 

Then the boundary value of the smooth extension of the scalar
curvature $S$ of the canonical connection $\nabla$ of $g$ guaranteed
by Corollary \ref{cor4.1} is locally constant, and $g$ admits an
asymptotic form as in formula \eqref{asymp-form} in Section \ref{2.6}
satisfying the conditions stated there.
\end{thm}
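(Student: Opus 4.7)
The proof proceeds in four steps. First, I will restrict to a neighborhood of any $x\in\partial M$ with $S(x)\neq 0$. By Proposition \ref{prop4.4}(1) these form an open dense subset of $\partial M$. On such a neighborhood $U$, Proposition \ref{prop4.4}(2) provides the asymptotic form
$$
g = f\,\frac{d\rho^2+\theta^2}{\rho^2}+\frac{h}{\rho},
$$
with $f:=-m(m+1)/S$ smooth up to the boundary and nowhere zero there, and $h$ Hermitean and smooth up to the boundary. I need to show that $f$ is locally constant along $\partial M\cap U$ and that $h(\xi,J\zeta)|_{\partial M}=f|_{\partial M}\,d\theta(\xi,\zeta)$ for $\zeta|_{\partial M}\in H$, matching \eqref{asymp-form}.

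Second, I will use the c--projective compactness input: $\hat\nabla=\nabla+d\rho/(2\rho)$ extends smoothly to $\bar M$. Repeating the Koszul-type computation for $2g(\hat\nabla_\xi\eta,\zeta)$ from the proof of Theorem \ref{thm2.6}, but now with $f$ varying rather than constant, the only new contribution to the $O(1/\rho^2)$ terms appears in $-\zeta\cdot g(\xi,\eta)$ and equals $-(\zeta\cdot f)R(\xi,\eta)/\rho^2$ where $R(\xi,\eta):=d\rho(\xi)d\rho(\eta)+\theta(\xi)\theta(\eta)$; everything else is identical to that calculation. Choosing $\zeta\in\frak X(U)$ with $d\rho(\zeta)\equiv\theta(\zeta)\equiv 0$, smoothness of $\rho g(\hat\nabla_\xi\eta,\zeta)$ requires this combined $O(1/\rho^2)$ coefficient to vanish on $\partial M$. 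Pairing with $\xi=\eta$ satisfying $d\rho(\xi)\neq 0$ and $\theta(\xi)=0$ at a boundary point isolates $-(\zeta\cdot f)$, forcing $(\zeta\cdot f)|_{\partial M}=0$ for every such $\zeta$. Once this is known, the residual $O(1/\rho^2)$ terms reduce exactly to those in the proof of Theorem \ref{thm2.6}, whose cancellation yields $h(\xi,J\zeta)|_{\partial M}=f|_{\partial M}\,d\theta(\xi,\zeta)$ for $\zeta|_{\partial M}\in H$.

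Third, I will promote the partial vanishing $(\zeta\cdot f)|_{\partial M}=0$ for $\zeta|_{\partial M}\in H$ to local constancy of $f|_{\partial M}$. For $\xi,\eta\in\frak X(U)$ with $d\rho=\theta=0$ identically, $\eta\cdot f$ is smooth and vanishes along $\partial M$, so $\eta\cdot f=\rho\,g_1$ for some smooth $g_1$, and then $\xi\cdot(\eta\cdot f)=d\rho(\xi)g_1+\rho\,\xi\cdot g_1=\rho\,\xi\cdot g_1$ also vanishes along $\partial M$. Symmetry gives $([\xi,\eta]\cdot f)|_{\partial M}=0$. Since $d\rho([\xi,\eta])=0$, the bracket is tangent to $\partial M$, and non-degeneracy of the Levi form permits $\xi,\eta\in H$ whose bracket $[\xi,\eta]|_{\partial M}$ has arbitrary prescribed projection to the Reeb direction of $T\partial M/H$; combined with the already established vanishing of the $H$-component of $df|_{T\partial M}$, this forces $df|_{T\partial M}=0$, so $f|_{\partial M}$ is locally constant, equivalently $S|_{\partial M}$ is locally constant.

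Fourth, setting $C:=f|_{\partial M}$ (a locally constant nonzero function on $\partial M\cap U$) yields the form \eqref{asymp-form} with constant $C$, and the $h$-condition above is precisely the required boundary condition from Section \ref{2.6}. To extend the conclusion from the dense open set $\{x\in\partial M:S(x)\neq 0\}$ to all of $\partial M$, local constancy on each connected component of this set implies the set is a union of connected components of $\partial M$, since otherwise continuity of $S|_{\partial M}$ would force a nonzero limiting constant at a point where $S$ vanishes. Hence $S$ is nowhere zero on $\partial M$ and locally constant globally, completing the proof. The main obstacle is the careful bookkeeping in the Koszul-type calculation of Step 2, in particular the isolation of the new term $-(\zeta\cdot f)R(\xi,\eta)/\rho^2$ by decomposing the $O(1/\rho^2)$ polynomial against the independent quantities $d\rho(\xi),d\rho(\eta),\theta(\xi),\theta(\eta)$; a secondary subtlety is the use of the Levi form's non-degeneracy in Step 3 to recover the Reeb-direction derivative of $f$ from the bracket identity.
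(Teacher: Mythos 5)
Your proposal is correct, and it reaches the conclusion by a genuinely different route from the paper's. The paper constructs a vector field $\mu$ with $d\rho(\mu)=1$ and $\th(\mu)=0$ along the boundary and ${}^\rho\nabla_\mu\mu=0$, identifies the boundary value of $\rho^2g(\mu,\mu)$ with a multiple of $S^{-1}$, and proves local constancy by showing directly (its Claim 1) that $\ze\cdot(\rho^2g(\mu,\mu))$ vanishes along $\partial M$ for CR directions $\ze$, via the torsion identity $\nabla_\ze\mu=\nabla_\mu\ze+[\ze,\mu]-\tfrac14\Cal N(\ze,\mu)$ and part (3) of Proposition \ref{prop4.4}; the boundary condition on $h$ then requires a separate computation (its Claim 2). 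You instead run the Koszul computation from the proof of Theorem \ref{thm2.6} backwards: since $\hat\nabla$ extends smoothly and hence $\rho\, g(\hat\nabla_\xi\eta,\ze)$ extends by Proposition \ref{prop4.4}(3), the full $O(\rho^{-2})$ coefficient
\begin{equation*}
\th(\xi)h(J\eta,\ze)+\th(\eta)h(J\xi,\ze)-(\ze\cdot f)\bigl(d\rho(\xi)d\rho(\eta)+\th(\xi)\th(\eta)\bigr)+f\bigl(d\th(\xi,\ze)\th(\eta)+d\th(\eta,\ze)\th(\xi)\bigr)
\end{equation*}
must vanish along $\partial M$, and evaluating it on suitable pairs $(\xi,\eta)$ (e.g.\ $\xi=\eta$ transverse with $\th(\xi)=0$, then $\eta$ with $\th(\eta)=1$, $d\rho(\eta)=0$) yields both $(\ze\cdot f)|_{\partial M}=0$ and $h(\xi,J\ze)|_{\partial M}=f\,d\th(\xi,\ze)$ from a single identity; your identification of the one new term $-(\ze\cdot f)R(\xi,\eta)/\rho^2$ relative to Theorem \ref{thm2.6} is accurate, as the Nijenhuis terms and the cancellations against \eqref{add-contrib} are unaffected by a variable coefficient. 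What your route buys is economy (one boundary identity delivers both conclusions, where the paper needs two separate claims) and it also makes explicit two points the paper compresses: the bracket--generating argument upgrading vanishing of $df$ on $H$ to vanishing on all of $T\partial M$ (the paper's terse ``by non--degeneracy''), and the globalization from the dense open set $\{S\neq0\}$ to all of $\partial M$ via the observation that a locally constant nonvanishing function on a dense open subset of $\partial M$ forces that subset to be a union of components. What the paper's route buys is that it isolates a single scalar quantity $\rho^2g(\mu,\mu)$ and never has to re-examine the full Koszul asymptotics, at the cost of the auxiliary construction of $\mu$. One small point worth spelling out in your Step 4: passing from the coefficient $f$ to the constant $C=f|_{\partial M}$ replaces $h$ by $h+\rho^{-1}(f-C)(d\rho^2+\th^2)$, which is still smooth up to the boundary and agrees with $h$ upon insertion of $J\ze$ for $\ze$ in the CR subspace, so the boundary condition on $h$ is preserved.
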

\begin{proof}
By Proposition \ref{prop4.4} the boundary value of $S$ is
non--vanishing on an open dense subset of $\partial M$ and we work
locally around a point $x$ in this subset. Let $\nabla$ be the
canonical connection of $g$ and for a local defining function $\rho$
for the boundary let ${}^\rho\nabla$ be the corresponding
c--projective modification of $\nabla$ which admits a smooth extension
to the boundary. Choose a vector field $\mu$ along the boundary such
that $d\rho(\mu)$ is identically one on the boundary whereas
$\th(\mu)$ vanishes along the boundary. As in the proof of Lemma 1 of
\cite{proj-comp2}, we can extend $\mu$ to a vector field defined
locally around $x$ such that ${}^\rho\nabla_\mu\mu=0$. Moreover, we
can use the flow of $\mu$ to identify an open neighborhood of $x$ in
$\barm$ with $V\x [0,\ep)$ for an open neighborhood $V$ of $x$ in
  $\partial M$ and some $\ep>0$.

Now consider the function $\rho^2g(\mu,\mu)$ which admits a smooth
extension to the boundary by part (3) of Proposition \ref{prop4.4}. By
part (2) of that proposition, this is given by  
$$
\rho^2g(\mu,\mu)=\hat CS^{-1}(d\rho(\mu)^2+\th(\mu)^2)+\rho h(\mu,\mu)
$$ 
for some constant $\hat C$. Along $\partial M$, we have $d\rho(\mu)=1$
and $\th(\mu)=0$, so this approaches $\hat CS^{-1}$ at the boundary. 

Now consider a vector field $\underline{\xi}\in\frak X(\partial M)$,
which is a section of the CR--subbundle. Then we can extend this to a
vector field $\ze$ on a neighborhood of $x$ such that $d\rho(\ze)=0$
and $\th(\ze)=0$. The derivative of the boundary value of $\hat
CS^{-1}$ in direction $\underline{\ze}$ can then be computed as the
boundary value of $\ze\cdot(\rho^2g(\mu,\mu))$. 

\textbf{Claim 1}: For any vector field $\ze$ such that $d\rho(\ze)=0$
and $\th(\ze)=0$, the function $\ze\cdot (\rho^2g(\mu,\mu))$ vanishes
along the boundary. 

To prove this claim, we first observe that $\ze\cdot\rho=0$, so on
$M$, we can write $\ze\cdot(\rho^2g(\mu,\mu))$ as
$2\rho^2g(\nabla_\ze\mu,\mu)$. Since $g$ is admissible, the torsion of
$\nabla$ equals $-\frac14\Cal N$, where $\Cal N$ is the Nijenhuis
tensor, so the definition of torsion shows that 
\begin{equation}\label{nximu}
\nabla_\ze\mu=\nabla_\mu\ze+[\ze,\mu]-\tfrac14\Cal N(\ze,\mu). 
\end{equation}
By assumption on the Nijenhuis tensor, $d\rho(\Cal N(\ze,\mu))$ and
$\th(\Cal N(\ze,\mu))$ both vanish along the boundary. Hence part (3)
of Proposition \ref{prop4.4} shows that $\rho^2g(-\frac14\Cal
N(\ze,\mu),\mu)$ vanishes along the boundary. 

Likewise, expanding $0=dd\rho(\ze,\mu)$ we obtain
$d\rho([\ze,\mu])=-\mu\cdot d\rho(\ze)+\ze\cdot d\rho(\mu)$. By
construction, $d\rho(\ze)=0$ and $d\rho(\mu)=1+\rho f$ for some smooth
function $f$, so since $\ze\cdot\rho=0$, we see that
$d\rho([\ze,\mu])$ vanishes along the boundary. Since also $\th(\mu)$
vanishes along the boundary, we can again use part (3) of Proposition
\ref{prop4.4} to conclude that $\rho^2g([\ze,\mu],\mu)$ vanishes along
the boundary.

Finally, over $M$, we can write 
\begin{equation}\label{gnmuxi}\begin{aligned}
\rho^2g(\nabla_\mu\ze,\mu)&=\rho^2\mu\cdot
g(\ze,\mu)-\rho^2g(\ze,\nabla_\mu\mu)\\
&=\rho\mu\cdot(\rho g(\ze,\mu))-\rho
d\rho(\mu)g(\ze,\mu)-\rho^2g(\ze,\nabla_\mu\mu). 
  \end{aligned}
\end{equation}
Now the construction of $\mu$ together with formula \eqref{cpdef} for
a c--projective modification implies that
\begin{equation}\label{nmumu}
\nabla_\mu\mu=-\frac{d\rho(\mu)}{\rho}\mu+\frac{d\rho(J\mu)}{\rho}J\mu= 
-\frac{d\rho(\mu)}{\rho}\mu-\frac{\th(\mu)}{\rho}J\mu. 
\end{equation}
Inserting this into \eqref{gnmuxi}, we see that
$\rho^2g(\nabla_\mu\ze,\mu)$ can be written as 
$$
 \rho\mu\cdot(\rho g(\ze,\mu))-\th(\mu)\rho g(\ze,J\mu). 
$$
Now by part (3) of Proposition \ref{prop4.4}, both $\rho g(\ze,\mu)$
and $\rho g(\ze,J\mu)$ admit smooth extensions to the boundary, so
since $\th(\mu)$ vanishes along the boundary, we see that also
$\rho^2g(\nabla_\mu\ze,\mu)$ vanishes along the boundary. This
completes the proof of Claim 1. 

\medskip

Hence we have verified that the derivative of the boundary value of
$\hat CS^{-1}$ in any direction lying in the CR subspace of $T\partial
M$ vanishes. By non--degeneracy, this implies that this boundary value
is locally constant. Denoting by $C$ this locally constant function on
the boundary extended constantly along flow lines of $\mu$, we see
that $\hat CS^{-1}=C+\rho f$ for a function $f$ which is smooth up to
the boundary. Hence part (2) of Proposition \ref{prop4.4} shows that
$g$ admits an asymptotic form as in formula \eqref{asymp-form} from
Section \ref{2.6}. To complete the proof of the theorem, it thus
remains to verify that for vector fields $\xi$ and $\ze$ such that
$d\rho(\ze)$ and $\th(\ze)$ vanish, $h(\xi,J\ze)$ approaches
$Cd\th(\xi,\ze)$ at the boundary.

\textbf{Claim 2}: Suppose that $\xi$ is a vector field that is smooth
up to the boundary such that $d\rho(\xi)$ vanishes along the
boundary. Then, for $\ze$ as above, $Cd\rho({}^\rho\nabla_\xi\ze)$
approaches $\frac12h(\ze,\xi)$ at the boundary.

To prove this claim, we first observe that by part (3) of Proposition
\ref{prop4.4} the function $\xi\cdot(\rho g(\ze,\mu))$ admits a
smooth extension to the boundary, and, on $M$ we can compute this as 
$$
d\rho(\xi)g(\ze,\mu)+\rho g(\nabla_\xi\ze,\mu)+\rho
g(\ze,\nabla_\xi\mu). 
$$ 
Since $d\rho(\xi)$ vanishes along the boundary, the first term in this
sum admits a smooth extension to the boundary. Next, we compute 
$$
\nabla_\xi\ze={}^\rho\nabla_\xi\ze-\frac{d\rho(\xi)}{2\rho}\ze-
\frac{\th(\xi)}{2\rho}J\ze. 
$$
The coefficient of $\ze$ in the second summand on the right hand side
admits a smooth extension to the boundary, so hooking this into $\rho
g(\_,\mu)$ one obtains a function which admits a smooth extension to
the boundary. Likewise, we compute
$$
\nabla_\xi\mu={}^\rho\nabla_\xi\mu-\frac{d\rho(\xi)}{2\rho}\mu-
\frac{d\rho(\mu)}{2\rho}\xi-\frac{\th(\xi)}{2\rho}J\mu-
\frac{\th(\mu)}{2\rho}J\xi. 
$$
Here the first, second, and the last summand in the right hand side
admit smooth extensions to the boundary and hence hooking them into
$\rho g(\ze,\_)$ one obtains functions which admit a smooth extension
to the boundary. The upshot of this is that 
\begin{align*}
\rho g({}^\rho\nabla_\xi\ze,\mu)&-\tfrac12\th(\xi)g(J\ze,\mu)-
\tfrac12d\rho(\mu)g(\ze,\xi)-\tfrac12\th(\xi)g(\ze,J\mu)\\
&=\rho g({}^\rho\nabla_\xi\ze,\mu)-\tfrac12d\rho(\mu)g(\ze,\xi)
\end{align*}
admits a smooth extension to the boundary. Hence if we multiply by
$\rho$ we must get a function tending to $0$ at the boundary. But
since we already know that we have the asymptotic form for $g$, we can
evaluate this directly which readily leads to the statement in Claim
2.

\medskip

This already suffices to the compute the boundary value of
$Cd\th(\xi,\ze)$ in the case that also $d\rho(\xi)$ and $\th(\xi)$
vanish identically. In this case,
$Cd\th(\xi,\ze)=-C\th([\xi,\ze])=Cd\rho(J[\xi,\ze])$. Since the
torsion of ${}^\rho\nabla$ has values in the CR subspace along the
boundary, we can compute the boundary value of this as the boundary
value of 
$$
Cd\rho(J({}^\rho\nabla_\xi\ze-{}^\rho\nabla_\ze\xi))=Cd\rho
({}^\rho\nabla_\xi J\ze)-Cd\rho ({}^\rho\nabla_\ze J\xi)
$$
and use claim 2 to compute both summands in the right hand side. This
leads to $\tfrac12h(J\ze,\xi)-\tfrac12 h(J\xi,\ze)=h(\xi,J\ze)$. 

To complete the proof, it suffices to show that we get the right
boundary value for $\xi=J\mu$. Then $\th(\xi)=d\rho(\mu)=1+\rho f$ for
some smooth function $f$. Since $d\rho(\ze)=0$ we conclude that
$\ze\cdot\th(\xi)$ goes to zero at the boundary, so we can again
compute $Cd\th(\xi,\ze)$ as 
$$ 
Cd\rho({}^\rho\nabla_\xi J\ze)-Cd\rho ({}^\rho\nabla_\ze J\xi). 
$$ 
The first summand can be computed using claim 2, so to complete the
proof, it suffices to show that $Cd\rho({}^\rho\nabla_\ze\mu)$
approaches $\frac12h(\ze,\mu)$ at the boundary. But in the proof of
claim 1, we have already seen that $\rho^2g(\nabla_\ze\mu,\mu)$
approaches zero at the boundary, so $\rho g(\nabla_\ze\mu,\mu)$ admits
a smooth extension to the boundary. Rewriting
$$
\nabla_\ze\mu={}^\rho\nabla_\ze\mu-\frac{d\rho(\mu)}{2\rho}\ze-
\frac{\th(\mu)}{2\rho}J\ze 
$$
and observing that the coefficient of $J\ze$ in the last summand
admits a smooth extension to the boundary, this follows in the same
way as claim 2. 
\end{proof}

\subsection{Curvature asymptotics}\label{4.6}
We next analyze the curvature of c--projectively compact special
affine connections and, more specifically, the canonical connections
of c--projectively compact metrics. As a first step, we describe the
asymptotic behavior of the projective Schouten tensor. As we have
noted in \ref{3.1}, the c--projective Schouten tensor $\Rho_{ab}$ is
not symmetric in general. Let us denote by $\be_{ab}:=\Rho_{[ab]}$ its
skew symmetric part. Further, the symmetric part $\Rho_{(ab)}$ is a
symmetric bilinear form on the tangent spaces, which are complex
vector spaces. Hence this can be decomposed into a Hermitean part
$\Rho^+_{ab}:= \frac12(\Rho_{(ab)}+J_a^iJ^j_b\Rho_{(ij)})$ and an
  anti--Hermitean part $\Rho^-_{ab}:=
  (\frac12\Rho_{(ab)}-J_a^iJ^j_b\Rho_{(ij)})$.   
\begin{lemma}\label{lem4.6}
Let $\barm$ be a smooth manifold with boundary $\partial M$ and
interior $M$. Let $J$ be an almost complex structure on $M$ and let
$\nabla$ be a minimal complex linear connection on $TM$ which is
c--projectively compact. Suppose further that the smooth extension of
$J$ to the boundary has the property that its Nijenhuis tensor has
asymptotically tangential values.

Let $\rho$ be a local defining function for $\partial M$, put
$\rho_a=d\rho$ and $\th_a=-J^i_a\rho_i$, and let $\hat\nabla$ be
the corresponding c--projective modification of $\nabla$. Then for the
c--projective Schouten tensor $\Rho_{ab}$ of $\nabla$, 
$$
\rho\Rho_{ab}+\tfrac1{4\rho}(\rho_a\rho_b+\th_a\th_b)
$$ 
admits a smooth extension to the boundary and its boundary value
coincides with $\tfrac12\hat\nabla_a\rho_b$. In particular, $\be_{ab}$
and $\rho\Rho^-_{ab}$ admit smooth extensions to the boundary.   
\end{lemma}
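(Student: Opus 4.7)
The plan is to evaluate the Schouten transformation formula (Proposition \ref{prop3.1}(2)) at the specific one--form $\Up_a=\rho_a/(2\rho)$ associated to the c--projective modification $\hat\nabla$ of $\nabla$. Since $\hat\nabla$ is smooth up to $\partial M$, so is its Schouten tensor $\hat\Rho_{ab}$; the task is then to reorganize the singular terms in the transformation identity so that they assemble into precisely $-(\rho_a\rho_b+\th_a\th_b)/(4\rho)$, with everything left over smooth. Carrying out the substitution, one computes $J^i_a\Up_i=-\th_a/(2\rho)$, whence $\Up_a\Up_b - J^i_aJ^j_b\Up_i\Up_j = (\rho_a\rho_b-\th_a\th_b)/(4\rho^2)$.

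The remaining term $\nabla_a\Up_b = \nabla_a\rho_b/(2\rho) - \rho_a\rho_b/(2\rho^2)$ must be converted from $\nabla$ to $\hat\nabla$ by dualizing \eqref{cpdef}, giving
$$\nabla_a\rho_b = \hat\nabla_a\rho_b + \Up_a\rho_b + \Up_b\rho_a - (J^i_a\Up_i)(J^j_b\rho_j) - (J^i_b\Up_i)(J^j_a\rho_j),$$
which simplifies to $\hat\nabla_a\rho_b + (\rho_a\rho_b-\th_a\th_b)/\rho$ using $J^j_b\rho_j=-\th_b$. Plugging back and combining yields the identity
$$\rho\Rho_{ab} + \tfrac{1}{4\rho}(\rho_a\rho_b+\th_a\th_b) = \rho\hat\Rho_{ab} + \tfrac12\hat\nabla_a\rho_b,$$
whose right--hand side is smooth on $\barm$ and restricts to $\tfrac12\hat\nabla_a\rho_b|_{\partial M}$, proving the main assertion.

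For the corollaries, observe that $\rho_a\rho_b+\th_a\th_b$ is both symmetric and Hermitian (Hermiticity follows from $J^i_a\th_i=\rho_a$). Taking the anti--Hermitian symmetric part of the identity kills this term, so $\rho\Rho^-_{ab}$ equals the anti--Hermitian symmetric part of the smooth right--hand side and hence extends. Taking the skew part gives $\rho\be_{ab}=\rho\hat\Rho_{[ab]}+\tfrac12\hat\nabla_{[a}\rho_{b]}$; to upgrade from smoothness of $\rho\be_{ab}$ to smoothness of $\be_{ab}$ itself one needs $\hat\nabla_{[a}\rho_{b]}$ to vanish along $\partial M$. This is precisely the content of Lemma \ref{lem2.2a}(1) applied to the minimal complex connection $\hat\nabla$ (minimality is preserved under c--projective change since the torsion is unchanged), combined with the hypothesis that $\Cal N$ has asymptotically tangential values. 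This last step is the only real subtlety: without the Nijenhuis condition one obtains only smoothness of $\rho\be_{ab}$, not of $\be_{ab}$ itself; the rest of the argument is a careful unwinding of tensor identities, the most error--prone point being the sign bookkeeping when applying $J$ to the singular one--form $d\rho/\rho$.
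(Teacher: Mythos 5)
Your proposal is correct and follows essentially the same route as the paper: both derive the identity $\rho\Rho_{ab}+\tfrac1{4\rho}(\rho_a\rho_b+\th_a\th_b)=\rho\hat\Rho_{ab}+\tfrac12\hat\nabla_a\rho_b$ from the Schouten transformation law with $\Up_a=\rho_a/(2\rho)$, and then extract the statements about $\be_{ab}$ and $\rho\Rho^-_{ab}$ by taking the skew and symmetric anti--Hermitean parts, using Lemma \ref{lem2.2a} for the symmetry of $\hat\nabla_a\rho_b$ along the boundary. The only (immaterial) difference is that the paper applies the transformation law in the direction expressing $\Rho_{ab}$ via $\hat\nabla_a\Up_b$ directly, whereas you use the forward direction and then convert $\nabla_a\rho_b$ to $\hat\nabla_a\rho_b$; both computations agree.
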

\begin{proof}
Denoting by $\hat\Rho_{ab}$ the c--projective Schouten tensor of
$\hat\nabla$, formula \eqref{Rhotransf} for the change of Schouten
tensor shows that 
$$
\Rho_{ab}=\hat\Rho_{ab}+\hat\nabla_a\Up_b+\Up_a\Up_b-J_a^iJ_b^j\Up_i\Up_j, 
$$
where $\Up_a=\tfrac1{2\rho}\rho_a$. Now
$\hat\nabla_a\Up_b=-\frac1{2\rho^2}\rho_a\rho_b+
\tfrac{1}{2\rho}\hat\nabla_a\rho_b$ and adding $\Up_a\Up_b$, we get
$-\frac1{4\rho^2}\rho_a\rho_b+\tfrac{1}{2\rho}\hat\nabla_a\rho_b$. On
the other hand, the last summand in the right hand side gives
$-\frac1{4\rho^2}\th_a\th_b$. Bringing two terms to the other side and
multiplying by $\rho$, we get
\begin{equation}\label{Rhoasymp}
\rho\Rho_{ab}+\tfrac1{4\rho}(\rho_a\rho_b+\th_a\th_b)=
\tfrac12\hat\nabla_a\rho_b+\rho\hat\Rho_{ab}. 
\end{equation}
Since $\hat\Rho_{ab}$ is smooth up to the boundary, it only remains to
prove the claims on $\be_{ab}$ and $\Rho^-_{ab}$. 

Skew symmetrizing in \eqref{Rhoasymp} over $a$ and $b$ and using that
$\hat\nabla_a\rho_b$ is symmetric along the boundary by Lemma
\ref{lem2.2a}, we obtain $\rho\be_{ab}=\rho\hat\be_{ab}$, so
$\be_{ab}$ admits a smooth extension to the boundary.

On the other hand, symmetrizing over $a$ and $b$ in \eqref{Rhoasymp},
does not affect the second summand in the left hand side, which in
addition is Hermitean. Hence forming the symmetric anti--Hermitean
part in the left hand side, we just get $\rho\Rho^-_{ab}$, whereas in
the right hand side all terms still admit a smooth extension to the
boundary.
\end{proof}

Suppose that $\ph_{ab}$ is a tensor field on an almost complex
manifold which is symmetric and Hermitean, i.e.~such that
$\ph_{ab}=\ph_{(ab)}=J^i_aJ^j_b\ph_{ij}$. Then one can associate to
$\ph$ a tensor field $C_{ab}{}^c{}_d$ defined by 
$$
C_{ab}{}^c{}_d:=2(\de^c_{[a}\ph_{b]d}-J^i_{[a}\ph_{b]i}J^c_d-J^c_{[a}\ph_{b]i}J^i_d).  
$$ This is obviously skew symmetric in $a$ and $b$ and a direct
computation shows that $C_{[ab}{}^c{}_{d]}=0$, so $C$ has the
symmetries of a curvature tensor. Direct computations also show that
$C_{ab}{}^i{}_dJ^c_i=C_{ab}{}^c{}_jJ^j_d$, so $C$ has values in
complex linear maps. Finally, one shows that
$C_{ab}{}^c{}_d=J^i_aJ^j_bC_{ij}{}^c{}_d$, so $C$ is of type
$(1,1)$. Hence we call $C$ the \textit{complex rank one curvature
  tensor associated to $\ph$}. Using this, we can now describe the
curvature asymptotics of a general c--projectively compact
connection. 

\begin{prop}\label{prop4.6}
Let $\barm$ be a smooth manifold with boundary $\partial M$ and
interior $M$. Let $J$ be an almost complex structure on $M$ and let
$\nabla$ be a minimal complex linear connection on $TM$ which is
c--projectively compact. Suppose further that the resulting smooth
extension of $J$ to the boundary has the property that its Nijenhuis
tensor has asymptotically tangential values. 

Then the curvature tensor $R$ of $\nabla$ has the property that for
any local defining function $\rho$ for the boundary, $\rho^2 R$ admits
a smooth extension to the boundary with boundary value
$-\tfrac{1}{4}C_{ab}{}^c{}_d$, where $C_{ab}{}^c{}_d$ is the complex
rank one curvature tensor ,  associated to
$\rho_a\rho_b+\th_a\th_b$.

If $J$ is integrable, then the boundary value of $\rho
R_{ab}{}^c{}_d+\tfrac1{4\rho}C_{ab}{}^c{}_d$ is given by 
$$
\de^c_{[a}\hat\nabla_{b]}\rho_d-J^i_{[a}(\hat\nabla_{b]}\rho_i)J^c_d-
J^c_{[a}(\hat\nabla_{b]}\rho_i)J^i_d
$$
\end{prop}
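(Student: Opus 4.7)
The strategy is to convert a question about the curvature $R$ into a question about the Schouten tensor $\Rho$ via Proposition \ref{prop3.1}(1), and then feed in the Schouten asymptotics from Lemma \ref{lem4.6}. Writing
\[
R_{ab}{}^c{}_d=W_{ab}{}^c{}_d+2\bigl(\de^c_{[a}\Rho_{b]d}-\Rho_{[ab]}\de^c_d-J^i_{[a}\Rho_{b]i}J^c_d-J^c_{[a}\Rho_{b]i}J^i_d\bigr),
\]
the Weyl part $W$ is c--projectively invariant, hence coincides with the Weyl part of $\hat\nabla$ and so extends smoothly to $\partial M$ by c--projective compactness. The skew part $\be_{ab}=\Rho_{[ab]}$ is itself smooth up to the boundary by Lemma \ref{lem4.6}. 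So the singular behavior of $R$ is entirely controlled by the three symmetric Schouten terms, together with the relation
\[
\rho\Rho_{ab}+\tfrac{1}{4\rho}(\rho_a\rho_b+\th_a\th_b)=\tfrac12\hat\nabla_a\rho_b+\rho\hat\Rho_{ab}
\]
provided by Lemma \ref{lem4.6}.

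For the first claim I would multiply the curvature formula by $\rho^2$. The terms $\rho^2 W$ and $\rho^2\be_{ab}\de^c_d$ vanish at the boundary, while the lemma gives $\rho^2\Rho_{ab}\to-\tfrac14(\rho_a\rho_b+\th_a\th_b)$. Substituting this into the three symmetric Schouten terms yields, by the very definition of the complex rank one curvature tensor associated to $\rho_a\rho_b+\th_a\th_b$, the value $-\tfrac14 C_{ab}{}^c{}_d$ at $\partial M$.

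For the integrable case I would multiply the same curvature formula by $\rho$ and use the sharper expansion $\rho\Rho_{ab}=-\tfrac{1}{4\rho}(\rho_a\rho_b+\th_a\th_b)+\tfrac12\hat\nabla_a\rho_b+\rho\hat\Rho_{ab}$. The $1/\rho$ part, substituted into the three symmetric Schouten terms, combines into exactly $-\tfrac{1}{4\rho}C_{ab}{}^c{}_d$, so $\rho R+\tfrac{1}{4\rho}C$ extends smoothly. The leftover terms $\rho W$, $\rho\hat\Rho_{b]d}$, and $\rho\be_{ab}\de^c_d$ all vanish at the boundary ($\rho\be_{ab}$ vanishes because $\hat\nabla_{[a}\rho_{b]}=-\tfrac12 T^c_{ab}\rho_c$ and in the integrable case $\hat\nabla$ is torsion--free). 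What survives is precisely the $\tfrac12\hat\nabla_{b]}\rho_d$ piece from the symmetric Schouten terms, which reproduces the stated tensor $\de^c_{[a}\hat\nabla_{b]}\rho_d-J^i_{[a}(\hat\nabla_{b]}\rho_i)J^c_d-J^c_{[a}(\hat\nabla_{b]}\rho_i)J^i_d$.

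The main bookkeeping obstacle will be to track the anti--Hermitean part $\rho\Rho^-_{ab}$ (only known to be smooth up to the boundary, not small) and the skew part $\rho\be_{ab}$ through the second-order analysis, and to verify that in the integrable case neither contributes to the boundary value of $\rho R+\tfrac{1}{4\rho}C$. Integrability enters decisively at this step: it kills the Nijenhuis-controlled terms that otherwise obstruct the clean formula, and it forces $\hat\nabla_{[a}\rho_{b]}=0$ everywhere (not merely on $\partial M$), which is what makes the sub-leading contribution collapse to the symmetric expression stated in the proposition.
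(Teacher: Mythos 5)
Your overall strategy is exactly the paper's: decompose $R_{ab}{}^c{}_d$ via Proposition \ref{prop3.1}(1) into the Weyl part plus Schouten terms, control $\be_{ab}=\Rho_{[ab]}$ and the singular part of $\Rho_{ab}$ by Lemma \ref{lem4.6}, and read off the boundary values of $\rho^2R$ and (in the integrable case) of $\rho R+\tfrac1{4\rho}C$ from the identity $\rho\Rho_{ab}+\tfrac1{4\rho}(\rho_a\rho_b+\th_a\th_b)=\tfrac12\hat\nabla_a\rho_b+\rho\hat\Rho_{ab}$. The integrable case as you set it up is fine and matches the paper's computation.

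There is, however, one genuine error: the assertion that ``the Weyl part $W$ is c--projectively invariant, hence coincides with the Weyl part of $\hat\nabla$ and so extends smoothly to $\partial M$.'' This is false for non--integrable $J$, which is precisely the generality in which the first claim of the proposition is stated. By Proposition 2.13 of \cite{CEMN}, only the $(2,0)$-- and $(1,1)$--components of $W$ are invariant; the $(0,2)$--component of $\hat W-W$ is obtained by contracting the torsion (a nonzero multiple of $\Cal N$) into a tensor built from one factor of $\Up_\ell=\tfrac{\rho_\ell}{2\rho}$ and Kronecker deltas or copies of $J$. So a priori $W$ blows up like $\tfrac1\rho$; what is true, and what the paper proves, is that $\rho W_{ab}{}^c{}_d$ extends smoothly to the boundary because $\rho\Up_\ell$ does. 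Your conclusion for the first claim survives, since $\rho^2W=\rho\cdot(\rho W)$ still tends to zero along $\partial M$, but the justification needs this finer transformation law rather than invariance. (In the integrable case your claim is correct, since the $(0,2)$--component of $W$ vanishes identically and $W=\hat W$ is smooth up to the boundary; this is also what the paper uses to drop $\rho W$ in the second part.) A smaller remark: the worry you raise about tracking $\rho\Rho^-_{ab}$ and $\rho\be_{ab}$ ``through the second-order analysis'' is unnecessary — the exact identity \eqref{Rhoasymp} already isolates the full $\tfrac1{\rho^2}$-- and $\tfrac1\rho$--singular parts of $\Rho_{ab}$, and the smoothness of $\be_{ab}$ up to the boundary (which Lemma \ref{lem4.6} gives in general, not only for integrable $J$) is all that is needed to discard the $\Rho_{[ab]}\de^c_d$ term after multiplying by $\rho$.
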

\begin{proof}
The characterization of the c--projective Schouten tensor
$\Rho_{ab}$ in part (1) of Proposition \ref{prop3.1} can be
reformulated as  
\begin{equation}\label{Rdecomp}
R_{ab}{}^c{}_d=W_{ab}{}^c{}_d+2\big(\de^c_{[a}\Rho_{b]d}-\Rho_{[ab]}\de^c_d-
J^i_{[a}\Rho_{b]i}J^c_d-J^c_{[a}\Rho_{b]i}J^i_d\big), 
\end{equation}
where $W_{ib}{}^i{}_d=0$. Now let $\hat\nabla={}^\rho\nabla$ be the
c--projective modification of $\nabla$ determined by $\rho$. Then of
course the associated Weyl curvature $\hat W_{ab}{}^c{}_d$ is smooth
up to the boundary. Now the relation between $W_{ab}{}^c{}_d$ and
$\hat W_{ab}{}^c{}_d$ is described in Proposition 2.13 of
\cite{CEMN}. This is done in a complexified picture, discussing the
$(p,q)$--components of the two--form $W$ with values in $\End_{\Bbb
  C}(T\bar M)$. The components of $(p,q)$--types $(2,0)$ and $(1,1)$
of $W_{ab}{}^c{}_d$ and $\hat W_{ab}{}^c{}_d$ agree. In type $(0,2)$,
the difference of the components of $\hat W$ and $W$ is obtained by
contracting the torsion of $\nabla$ into a tensor $v_{ab}{}^c$ which 
is a linear combination of tensor products of one factor $\Up_\ell$
with either a Kronecker delta or two copies of $J$. Since
$\rho\Up_\ell=\tfrac12\rho_\ell$ is smooth up to the boundary, we see
that $\rho v_{ab}{}^c$ admits a smooth extension to the boundary, so
$\rho W_{ab}{}^c{}_d$ admits a smooth extension to the boundary. 

Multiplying \eqref{Rdecomp} by $\rho^2$, we thus conclude form Lemma
\ref{lem4.6} that the right hand side admits a smooth extension to the
boundary. Moreover, the first summand and the term involving
$\Rho_{[ab]}$ do not contribute to the boundary value, and in the
other summands we may replace $\Rho_{ab}$ by
$-\tfrac14(\rho_a\rho_b+\th_a\th_b)$ without changing the boundary
value. Hence the first result follows from the definition of
$C_{ab}{}^c{}_d$.

If $J$ is integrable, then by Proposition 2.13 of \cite{CEMN}, the
$(0,2)$--component of $W_{ab}{}^c{}_d$ vanishes for any connection in
the c--projective class. Hence from above we conclude that in this
case $W_{ab}{}^c{}_d$ admits a smooth extension to the boundary.  So
up to terms admitting a smooth extension to the boundary, we can write
$R_{ab}{}^c{}_d$ as in \eqref{Rdecomp} but leaving out the terms
containing $W_{ab}{}^c{}_d$ or $\Rho_{[ab]}$ in the right hand
side. Then the claimed expression follows directly from Lemma
\ref{lem4.6}.
\end{proof}

\subsection{The asymptotic Einstein property}\label{4.7} 
We conclude the article by discussing the curvature asymptotics for
c--projectively compact metrics. In particular, we show that the
canonical connection associated to such a metric automatically
satisfies an asymptotic version of the Einstein equation.

\begin{thm}\label{thm4.7}
  Let $\barm$ be a smooth manifold with boundary $\partial M$ and
  interior $M$. Let $J$ be an almost complex structure on $M$ and let
  $g$ be an admissible Hermitean metric on $M$, which is
  c--projectively compact. Suppose further that the Nijenhuis tensor
  of the resulting smooth extension of $J$ to the boundary has
  asymptotically tangential values. Let $\Rho_{ab}$ be the Schouten
  tensor of the canonical connection $\nabla$ of $g$ and $\Rho^+_{ab}$
  its symmetric Hermitean part.

(1) The tracefree part
$\Rho^\o_{ab}:=\Rho^+_{ab}-\tfrac1{2m}g^{ij}\Rho_{ij}g_{ab}$ of
$\Rho^+$ admits a smooth extension to the boundary.

(2) If $J$ is integrable, then $\Rho^+_{ab}=\Rho_{ab}$, so the
K\"ahler metric $g$ satisfies an asymptotic version of the Einstein
equation. In this case, let $\rho$ be a local defining function for
the boundary, put $\rho_a=d\rho$ and $\th_a=-J_a^i\rho_i$, let
$(d\th)_{ab}$ be the exterior derivative of $\th_a$ and let
$C_{ab}{}^c{}_d$ be the complex rank--one curvature tensor associated
to $\rho_a\rho_b+\th_a\th_b$. Then up to terms admitting a smooth
extension to the boundary, the curvature $R_{ab}{}^c{}_d$ of $g$ is
given by
$$
-\tfrac{1}{4\rho^2}C_{ab}{}^c{}_d+\tfrac1{2\rho}\left(
\delta^c_{[a}J^i_{b]}(d\th)_{di}-(d\th)_{ab}J^c_d+
J^c_{[a}J^i_{b]}(d\th)_{ij}J^j_d\right). 
$$
\end{thm}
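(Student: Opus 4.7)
My plan is to combine asymptotic expansions for the Schouten tensor and for the metric to isolate the claimed smooth quantities, then verify the remaining cancellation via boundary data.

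For part (1), projecting Lemma \ref{lem4.6} onto the symmetric Hermitean component yields
\[
\rho\Rho^+_{ab}+\tfrac{1}{4\rho}(\rho_a\rho_b+\th_a\th_b)=\tfrac{1}{2}(\hat\nabla_a\rho_b)^+ +\rho\hat\Rho^+_{ab},
\]
whose right--hand side is smooth up to the boundary. Multiplying the asymptotic form from Proposition \ref{prop4.4}(2) by $\tfrac{k(\rho)}{2m}$, with $k(\rho):=g^{ij}\Rho_{ij}$ (smooth with locally constant, non--zero boundary value $k_0$ by Corollary \ref{cor4.1} and Theorem \ref{thm4.5}), gives
\[
\tfrac{k(\rho)}{2m}g_{ab}+\tfrac{1}{4\rho^2}(\rho_a\rho_b+\th_a\th_b)=\tfrac{k(\rho)\,h_{ab}}{2m\rho}.
\]
Subtracting cancels the $\rho^{-2}$ singularities and yields
\[
\Rho^\o_{ab}=\tfrac{1}{\rho}\Bigl[\tfrac{1}{2}(\hat\nabla_a\rho_b)^+-\tfrac{k(\rho)\,h_{ab}}{2m}\Bigr]+\hat\Rho^+_{ab}.
\]
Part (1) thus reduces to showing the bracket vanishes along $\partial M$. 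Under the asymptotically tangential Nijenhuis hypothesis, Lemma \ref{lem2.2a} gives symmetry of $\hat\nabla_a\rho_b$ along $\partial M$; expanding $(d\th)_{ab}=2\hat\nabla_{[a}\th_{b]}$ modulo torsion (which vanishes at the boundary) yields $(\hat\nabla_a\rho_b)^+|_{\partial M}=\tfrac{1}{2}(d\th)_{ae}J^e_b$. The boundary condition from Section \ref{2.6}/Theorem \ref{thm4.5}, namely $h(\xi,J\ze)|_{\partial M}=Cd\th(\xi,\ze)$ for $\ze$ in the CR subspace with $C=-m/(2k_0)$, supplies the cancellation on the CR subspace. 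Extending this identity to all of $T\barm|_{\partial M}$ is the crucial step: I propose to deduce it from the smoothness and explicit form of the inverse tractor $\Ph=L(\si^{ab})^{-1}\in\Ga(\Cal H)$ computed in the proof of Proposition \ref{prop4.4}(2), which globally constrains $h$ on the boundary through the extendability of $\Ph$ to $\barm$.

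For part (2), in the K\"ahler case $\nabla$ is the torsion--free Levi--Civita connection and $\Ric$ is symmetric (by Bianchi) and Hermitean (standard), so $\Rho=\Rho^+$, and the asymptotic Einstein assertion follows from (1). For the curvature formula, Proposition \ref{prop4.6} in the integrable case gives, modulo smooth terms,
\[
R_{ab}{}^c{}_d=-\tfrac{1}{4\rho^2}C_{ab}{}^c{}_d+\tfrac{1}{\rho}\bigl[\delta^c_{[a}\hat\nabla_{b]}\rho_d-J^i_{[a}(\hat\nabla_{b]}\rho_i)J^c_d-J^c_{[a}(\hat\nabla_{b]}\rho_i)J^i_d\bigr].
\]
In the K\"ahler setting, $\Rho^-\equiv 0$, so the anti--Hermitean symmetric part of Lemma \ref{lem4.6} forces $(\hat\nabla_a\rho_b)^-|_{\partial M}=0$; combined with the symmetric part above, $\hat\nabla_a\rho_b|_{\partial M}=\tfrac{1}{2}(d\th)_{ae}J^e_b$. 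Substituting this into the Proposition \ref{prop4.6} expression and simplifying using $J^i_aJ^e_i=-\delta^e_a$, the antisymmetry of $d\th$, and its Hermitean property on $T\barm|_{\partial M}$ from Lemma \ref{lem2.2a} recovers the stated form.

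The main obstacle is the non--CR portion of the boundary identity in (1): the CR--subspace relation does not by itself pin down $h_{ab}|_{\partial M}$ outside the CR subspace, and supplying the missing data requires exploiting the smooth extension of the tractor $\Ph$ (equivalently of $L(\si^{ab})$) together with the tractor--theoretic constraints relating its slots at the boundary.
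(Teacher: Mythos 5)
Your overall strategy coincides with the paper's: for part (1) you take the symmetric Hermitean part of the identity \eqref{Rhoasymp} from Lemma \ref{lem4.6}, subtract the rescaled asymptotic form of $g$ from Proposition \ref{prop4.4}(2) and Theorem \ref{thm4.5} to cancel the $\rho^{-2}$ singularities, and reduce everything to the vanishing along $\partial M$ of
$$
\tfrac12(\hat\nabla_a\rho_b)^+-\tfrac{k_0}{2m}h_{ab}
=\tfrac14\Big((d\th)_{ai}J^i_b+\tfrac1C h_{ai}J^i_bJ^b{}_{\cdot}\Big)\quad\text{i.e.\ of}\quad
\tfrac1C h_{ai}J^i_b-(d\th)_{ab},
$$
while for part (2) you feed the resulting boundary value of $\hat\nabla_a\rho_b$ into Proposition \ref{prop4.6}. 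This is exactly the paper's route, and your part (2) is sound given part (1) (indeed the identification $\hat\nabla_a\rho_b|_{\partial M}=\tfrac12(d\th)_{ae}J^e_b$ needs only the symmetry of $\hat\nabla_a\rho_b$ from Lemma \ref{lem2.2a} and the vanishing of $\Rho^-$ in the K\"ahler case, not the cancellation with $h$).

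The step you yourself label ``the main obstacle'' is, however, a genuine gap and not a loose end: the entire content of part (1) is the identity $\tfrac1C h_{ai}J^i_b=(d\th)_{ab}$ on all of $T\barm|_{\partial M}$, whereas Theorem \ref{thm4.5} delivers it only when one argument lies in the CR subspace $H$. Since the two--form $\tfrac1Ch_{ai}J^i_b-(d\th)_{ab}$ is antisymmetric, of type $(1,1)$, and vanishes on $T\barm\times H$, the single undetermined component is its value on $(\mu,J\mu)$, equivalently the boundary value of $h(\mu,\mu)$; this is precisely what your argument does not control, and the trace--freeness of $\Rho^\o_{ab}$ gives no constraint here because the boundary value of $\rho^{-1}g^{ab}$ annihilates $\rho_a\rho_b+\th_a\th_b$. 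Your proposed fix via the inverse tractor $\Ph$ cannot work as stated: smooth extendability of $\Ph$ to $\barm$ is equivalent to extendability of its slots in the splitting of $\hat\nabla$, and by formula \eqref{hatPhi} the bottom slot is $\hat\tau h_{bc}$ -- so this is exactly the information already used to define $h$ and imposes no further condition on its boundary value. (Extracting more from the tractor picture would require using $\nabla^{\Cal H}\Ph$ or a normality/BGG condition on $\Ph$, which is not available without essentially assuming the asymptotic Einstein property you are trying to prove.) The paper closes this step by contracting $J$ into one index, identifying the covariant--derivative terms with $(d\th)_{ab}$ using that $\Cal N$ hooks trivially into $\th$ along the boundary, and invoking the compatibility between $h$ and $d\th$ from Theorem \ref{thm4.5}; to make your write--up complete you must either establish that compatibility on all of $T\barm|_{\partial M}$ or give an independent argument for the one missing component $h(\mu,\mu)$.
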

\begin{proof}
(1) Let $\rho$ be a local defining function for the boundary. Taking
  the Hermitean part in formula \eqref{Rhoasymp} from the proof of
  Lemma \ref{lem4.6}, we conclude that
  $\rho\Rho^+_{ab}+\tfrac1{4\rho}(\rho_a\rho_b+\th_a\th_b)$ admits a
  smooth extension to the boundary with boundary value the symmetric
  Hermitean part of $\tfrac12\hat\nabla_a\rho_b$. By Lemma
  \ref{lem2.2a}, $\hat\nabla_a\rho_b$ is symmetric along the boundary,
  so we just have to take the Hermitean part
  $\tfrac14(\hat\nabla_a\rho_b+J^i_aJ^j_b\hat\nabla_i\rho_j)$.

  On the other hand, we use the asymptotic form for $g$ from formula
  \eqref{asymp-form} from Section \ref{2.6} provided by Theorem
  \ref{thm4.5}. Comparing to part (2) of Proposition \ref{prop4.4}, we
  see that that constant $C$ occurring in this asymptotic form equals
  the boundary value of $-\tfrac{m}2(g^{ij}\Rho_{ij})^{-1}$. Hence we
  conclude that
  $$
\rho\frac{g^{ij}\Rho_{ij}}{2m}g_{ab}+\tfrac1{4\rho}(\rho_a\rho_b+\th_a\th_b)
$$
admits a smooth extension to the boundary with boundary value
$\tfrac{-1}{4C}h_{ab}$, where $h_{ab}$ is the
Hermitean form that occurs in the asymptotic form of $g$.   Hence we
conclude that $\rho\Rho^\o_{ab}$ admits a smooth extension to the
boundary with boundary value 
$$
\tfrac14\left(\hat\nabla_a\rho_b+J^i_aJ^j_b\hat\nabla_i\rho_j+
\tfrac{1}{C}h_{ab}\right). 
$$
Now let us contract $J$ into the $b$--index of the expression in the
bracket. Using symmetry and the fact that $J$ is parallel, the
boundary values of the result coincides with the one of 
\begin{equation}
  \label{ntheta}
 \hat\nabla_aJ_b^i\rho_i-\hat\nabla_bJ^i_a\rho_i+\tfrac{1}{C}h_{ai}J^i_b=
\tfrac{1}{C}h_{ai}J^i_b-\hat\nabla_a\th_b+\hat\nabla_b\th_a.   
\end{equation}
Now since the Nijenhuis tensor also hooks trivially into $\th_a$, the
boundary values of the last two terms give $(d\th)_{ab}$, so
\eqref{ntheta} vanishes by Theorem \ref{thm4.5}. Hence we see that the
smooth extension of $\rho\Rho^\o_{ab}$ to the boundary vanishes along
the boundary, so $\Rho^\o_{ab}$ itself admits a smooth extension to
the boundary.

(2) If $J$ is integrable, then by Proposition \ref{prop2.5}, $g$ is
(pseudo--)K\"ahler metric, so its Ricci curvature is well known to be
Hermitean, which also implies that $\Rho_{ab}$ is Hermitean. By the
proof of part (1), we conclude that in this case $\hat\nabla_a\rho_b$
is Hermitean along the boundary and coincides with $\tfrac12
J^i_a(d\th)_{bi}$ along the boundary. Using this, the form of the
curvature follows readily from Proposition \ref{prop4.6}.
\end{proof}

\subsection{The case of asymptotically parallel Nijenhuis
  tensor}\label{4.8} In the case of a non--integrable almost complex
structure, all the results of Section \ref{4} are based on the
canonical connection $\nabla$ of a quasi--K\"ahler metric $g$ and not
on its Levi--Civita connection $\nabla^g$. In particular, all
conditions and results on curvature concern the curvature of $\nabla$.
To conclude this article, we study an asymptotic vanishing condition
on the covariant derivative of the Nijenhuis tensor, which in
particular is satisfied for nearly K\"ahler metrics. Assuming this
condition, we show that in the results of Section \ref{4}, one may
often use the curvature of $\nabla^g$ instead of the curvature of
$\nabla$.

\begin{definition}\label{def4.8}
Let $\barm$ be a smooth manifold with boundary $\partial M$ and
interior $M$. Let $J$ be an almost complex structure on $M$ with
Nijenhuis tensor $\Cal N$, and let $\nabla$ be a linear connection on
$TM$ which preserves $J$. We say that $\Cal N$ is asymptotically
parallel for $\nabla$ if the tensor field $\nabla\Cal N$ admits a
smooth extension to all of $\barm$ which vanishes along the boundary.  
\end{definition}

\begin{prop}\label{prop4.8}
Let $\barm$ be a smooth manifold with boundary $\partial M$ and
interior $M$. Let $J$ be an almost complex structure on $\barm$ with
Nijenhuis tensor $\Cal N$, and let $g$ an admissible Hermitean metric
on $(M,J)$ such that
\begin{itemize}
\item $\Cal N$ as asymptotically tangential values
\item $\Cal N$ is asymptotically parallel for the canonical
connection $\nabla$ of $g$.
\end{itemize}
Then for the curvature $R$ of $\nabla$ and the curvature $R^g$ of the
Levi--Civita connection $\nabla^g$ of $g$, the difference $R-R^g$
admits a smooth extension to all of $\barm$.
\end{prop}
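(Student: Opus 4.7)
My approach is to compare the two curvatures via the contorsion tensor $A := \nabla - \nabla^g$ and then to check the smoothness of the pieces that appear. A direct computation, using that $\nabla^g$ is torsion free while $\nabla$ has torsion $T = -\tfrac14 \Cal N$, gives the standard difference formula
\begin{equation*}
R(\xi,\eta)\zeta - R^g(\xi,\eta)\zeta = (\nabla_\xi A)(\eta,\zeta) - (\nabla_\eta A)(\xi,\zeta) + A(T(\xi,\eta),\zeta) - A(\xi,A(\eta,\zeta)) + A(\eta,A(\xi,\zeta)).
\end{equation*}
Since $T = -\tfrac14\Cal N$ extends smoothly to $\barm$ (as $J$, and hence $\Cal N$, is defined on all of $\barm$), it suffices to show that $A$ and $\nabla A$ both admit smooth extensions to $\barm$.

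Next, recall from the proof of Proposition \ref{prop2.5} that $A$ is algebraically characterized by the Koszul-type identity
\begin{equation*}
2\,g(A(X,Y),Z) = g(T(X,Y),Z) - g(T(X,Z),Y) - g(T(Y,Z),X),
\end{equation*}
together with the conjugate linearity $A(JX,\cdot) = A(\cdot,JX) = -JA(\cdot,\cdot)$. Applying $\nabla$ to both sides of this identity and using $\nabla g = 0$, one obtains the \emph{same} identity with $A$ replaced by $\nabla A$ and $T$ replaced by $\nabla T = -\tfrac14 \nabla\Cal N$. In other words, the $g$-dependent algebraic operator $\mathcal{K}_g$ that sends $T$ to $A$ also sends $\nabla T$ to $\nabla A$. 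Consequently, once smoothness of $A$ on $\barm$ is known, smoothness of $\nabla A$ is automatic from the second hypothesis, which asserts that $\nabla\Cal N$ extends smoothly (indeed, with vanishing boundary value).

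The problem thus collapses to the single step of showing that $A$ itself admits a smooth extension to $\barm$, and this is the main obstacle. Although the Koszul formula superficially involves $g$ and $g^{-1}$ and we make no direct assumption on the asymptotics of $g$, the specific combinations appearing are forced to be regular by the algebraic structure of $T$. The plan is to work in a local frame adapted to the boundary splitting (as provided by Lemma \ref{lem2.2a}), and to use the conjugate linearity of $T$ together with the hypothesis that $\Cal N$ has asymptotically tangential values --- so that along $\partial M$ the torsion takes values in the CR subspace $H$ --- in order to pair the would-be singular contributions from $g$ and $g^{-1}$ into regular combinations, verifying component by component that $A$ extends smoothly to $\barm$. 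Once this key technical step is established, the smoothness of $\nabla A$ follows from the algebraic relation $\nabla A = \mathcal{K}_g(\nabla\Cal N)$ above, and substituting the smooth extensions of $A$, $\nabla A$ and $T$ into the curvature difference formula completes the proof.
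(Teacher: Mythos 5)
Your skeleton matches the paper's proof: both compare $\nabla$ and $\nabla^g$ via the contorsion tensor $A$, use the standard curvature difference formula, and observe that since $\nabla g=0$, the derivative $\nabla A$ is obtained by feeding $\nabla T$ into the same $g$--dependent algebraic operator $\mathcal K_g$ that produces $A$ from $T$. The difficulty is concentrated exactly where you say it is: the smooth extension of $A$ (and of $\nabla A$) to $\barm$.

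At that point, however, your proposal has a genuine gap. You assert that ``the specific combinations appearing are forced to be regular by the algebraic structure of $T$,'' explicitly disclaiming any input on the asymptotics of $g$. This cannot work: in abstract indices the Koszul identity gives
$$
A^c_{ab}=\tfrac12\bigl(-g_{ib}T^i_{aj}g^{jc}-g_{ia}T^i_{bj}g^{jc}+T^c_{ab}\bigr),
$$
and whether $g_{ib}T^i_{aj}g^{jc}$ extends to $\partial M$ is a quantitative question about how fast $g$ blows up and $g^{-1}$ degenerates there; no amount of conjugate linearity of $T$ controls this if $g$ is allowed arbitrary growth in tangential directions. What actually closes the argument --- and what the paper uses --- is the asymptotic information coming from c--projective compactness of $g$ (an implicit standing hypothesis here, needed for Corollary \ref{cor4.1} and Proposition \ref{prop4.4} to apply). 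Concretely: the proof of Proposition \ref{prop4.4} shows that $\rho^{-1}g^{jc}$ extends smoothly to the boundary, and part (3) of that proposition shows that $\rho\, g_{ib}T^i_{aj}$ extends smoothly precisely because $T=-\tfrac14\Cal N$ has asymptotically tangential values (indeed values in the CR subspace, so both the $d\rho$-- and $\th$--insertions vanish along $\partial M$). The product of these two regular factors is $g_{ib}T^i_{aj}g^{jc}$, whence $A$ extends. The same two estimates handle $\nabla A=\mathcal K_g(\nabla T)$, now using that $\nabla\Cal N$ not merely extends but vanishes along $\partial M$, so that $\rho\, g_{ib}(\nabla T)^i_{aj}=\rho^2 g_{ib}B^i_{aj}$ for a smooth $B$ and the unconditional part of Proposition \ref{prop4.4}(3) applies; calling this step ``automatic'' skips the same issue. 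Your plan to verify the extension ``component by component'' in an adapted frame would have to import exactly these two facts, so as written the key step is not established.
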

\begin{proof}
Consider the contorsion tensor $A$ defined by
$\nabla^g_\xi\eta=\nabla_\xi\eta-A(\xi,\eta)$. The explicit form of
$A$ has already been used in the proof of Proposition
\ref{prop2.5}. Replacing $J\zeta$ by $\zeta$ in that formula and
contracting with an inverse metric, the result reads in abstract index
notation as
$$
A^c_{ab}=\tfrac12\left(-g_{ib}T^i_{aj}g^{jc}-g_{ia}T^i_{bj}g^{jc}+T_{ab}^c\right), 
$$
where $T=T^c_{ab}$ is the torsion of $\nabla$ and hence equals
$-\tfrac14\Cal N$. Since $\Cal N$ has asymptotically tangential
values, part (3) of Proposition \ref{prop4.4} implies that $\rho
g_{ib}T^i_{aj}$ admits a smooth extension to the boundary. From the
proof of that Proposition, we also know that $\rho^{-1}g^{jc}$ admits
a smooth extension to to the boundary. Hence we conclude that the
contorsion tensor $A$ admits a smooth extension to the
boundary. Computing the covariant derivative $\nabla A$ using that
$\nabla$ is metric, we see that we only get covariant derivatives
hitting $T$, so we similarly conclude that $\nabla A$ admits a smooth
extension to the boundary. But then from the definition of curvature,
one easily concludes that for $\xi,\eta,\ze\in\frak X(M)$, one may
write the difference $R^g(\xi,\eta)(\ze)-R(\xi,\eta)(\ze)$ as
$$
-(\nabla A)(\xi,\eta,\ze)+(\nabla
A)(\eta,\xi,\ze)-A(T(\xi,\eta),\ze)+A(\xi,A(\eta,\ze))-A(\eta,A(\xi,\ze)), 
$$
which implies the result. 
\end{proof}

\begin{cor}\label{cor4.8} 
Under the assumptions of Proposition \ref{prop4.8}, which are in
particular satisfied if $g$ is a nearly K\"ahler metric (of any
signature), one may replace curvature quantities associated to the
canonical connection $\nabla$ by quantities associated to the
Levi--Civita connection $\nabla^g$ of $g$ as follows:
\begin{itemize}
\item scalar curvature in Theorem \ref{thm4.5} 
\item Schouten tensor in Lemma \ref{lem4.6} and Theorem \ref{thm4.7}
\item full curvature tensor in Proposition \ref{prop4.6}.
\end{itemize}
\end{cor}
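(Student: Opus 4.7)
The plan is to deduce all three statements from Proposition \ref{prop4.8}, which provides that the difference $R - R^g$ of the curvatures of the canonical connection and the Levi--Civita connection extends smoothly to $\barm$. The overall strategy is to show that each of the relevant quantities (scalar curvature, Schouten tensor, full curvature) associated to $\nabla^g$ differs from the corresponding quantity for $\nabla$ by a tensor that extends smoothly to the boundary and, wherever necessary, vanishes there.

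First, I handle the full curvature tensor in Proposition \ref{prop4.6}. Since $\rho^2(R - R^g)$ extends smoothly to $\barm$ and vanishes along $\partial M$, adding it to the smooth extension of $\rho^2 R$ supplied by Proposition \ref{prop4.6} gives the same extension for $\rho^2 R^g$ with identical boundary value; in the integrable subcase there is nothing further to prove, since $\nabla = \nabla^g$. For the Schouten tensor, contracting the extension of $R - R^g$ shows that $\Ric - \Ric^g$ extends smoothly, and hence so does the algebraic combination $\Rho - \Rho^g$ (defining $\Rho^g$ by applying formula \eqref{Rhodef} to $\Ric^g$), together with the differences of its symmetric, skew, Hermitean and anti--Hermitean parts. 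Adding $\rho(\Rho^g - \Rho)$, which extends smoothly and vanishes along $\partial M$, to the smooth extension from Lemma \ref{lem4.6} gives the analogous extension for $\Rho^g$ with identical boundary value $\tfrac12\hat\nabla_a\rho_b$, and the statements about $\be^g$ and $\rho(\Rho^g)^-$ follow identically. The extension property for $(\Rho^g)^\o$ from Theorem \ref{thm4.7}(1) reduces similarly, modulo a trace--correction term that must be handled by the density argument described next combined with the explicit asymptotic form of $g$ from Theorem \ref{thm4.5}; Theorem \ref{thm4.7}(2) is an integrable--case statement, for which $\nabla = \nabla^g$.

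The scalar curvature step in Theorem \ref{thm4.5} is the key technical point. One might naively expect $S - S^g = g^{ij}(\Ric - \Ric^g)_{ij}$ to be bounded, since $\Ric - \Ric^g$ extends smoothly --- but $g^{ij}$ blows up at the boundary. The way out is to work with densities. Setting $F_{ij} := \Ric_{ij} - \Ric^g_{ij}$, Corollary \ref{cor4.1} gives a smooth extension of $\si^{ij} := \tau^{-1}g^{ij}$ to a section of $\Herm(T^*\barm) \otimes \Cal E(-2)$, where $\tau := \vol_g^{-1/(m+1)}$. Hence $\si^{ij}F_{ij}$ is a smooth section of $\Cal E(-2)$ on all of $\barm$, and
$$
S - S^g = g^{ij}F_{ij} = \tau\cdot(\si^{ij}F_{ij}).
$$
Since $\tau$ is a defining density of order two for $\partial M$ by Proposition \ref{prop2.3}, this exhibits $S - S^g$ as a smooth function on $\barm$ that vanishes along $\partial M$. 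Consequently $S^g$ admits a smooth extension with the same boundary value as $S$, which by Theorem \ref{thm4.5} is locally constant and non--zero on an open dense subset of $\partial M$; the asymptotic form \eqref{asymp-form} then holds verbatim with $S^g$ in place of $S$.

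For the nearly K\"ahler case, it is classical that $\nabla \Cal N = 0$ on all of $M$, so the asymptotic parallelism hypothesis of Proposition \ref{prop4.8} is automatic, while the asymptotically tangential values condition is part of the standing assumptions. The main obstacle is the scalar curvature step: the product $g^{ij}F_{ij}$ is naively singular, and its smooth extension together with boundary vanishing only becomes transparent after reinterpreting $g^{ij}$ via the tractor density $\si^{ij}$; a parallel subtlety arises in the Schouten tracefree part for Theorem \ref{thm4.7}(1), which requires the same density--level bookkeeping paired with the asymptotic form of $g$.
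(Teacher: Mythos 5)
Your proposal is correct and follows essentially the same route as the paper: smooth extendability of $R-R^g$ from Proposition \ref{prop4.8}, Ricci contraction for the Schouten tensor, and the observation that contracting with the inverse metric gains a factor of $\rho$ so that $S-S^g$ vanishes along the boundary. Your packaging of that last step via $\tau$ and $\si^{ij}=\tau^{-1}g^{ij}$ is just an equivalent reformulation of the paper's remark that $\rho^{-1}g^{ab}$ extends smoothly (since $\tau=\rho\hat\tau$ with $\hat\tau$ nowhere vanishing), and your explicit flagging of the trace--correction term in Theorem \ref{thm4.7}(1) is, if anything, more careful than the paper's closing sentence.
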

\begin{proof}
  First note that on a nearly K\"ahler manifold, the Nijenhuis tensor
  is globally parallel for the canonical connection, see Remark 4.5 in
  \cite{CEMN} and \cite{Nagy}, so the assumptions of Proposition
  \ref{prop4.8} are satisfied in this case. Next, in any case where the assumptions of 
 Proposition
  \ref{prop4.8} are satisfied we have that $R$ and $R^g$ differ by terms which admit a
  smooth extension to the boundary. Forming the Ricci contraction, we
  conclude that the same holds for the difference of the Ricci tensors
  and hence for the difference of the Schouten tensors of the two
  connections. Finally, the scalar curvature is obtained by
  contracting an inverse metric $g^{ab}$ into the Ricci curvature. As
  we have noted in the proof of Proposition \ref{prop4.8},
  $\rho^{-1}g^{ab}$ admits a smooth extension to the boundary. Thus we
  conclude that the difference of the scalar curvatures of the two
  connections not only admits a smooth extension to the boundary but
  this extension also vanishes along the boundary.  From this all the
  claims follow from the statements of the results listed in the
  corollary.
\end{proof}

\end{document}